\def\ge{\geqslant}
\def\le{\leqslant}
\def\a{\alpha}
\def\D{\Delta}
\def\L{\Lambda}
\def\l{\lambda}
\def\i{^{-1}}
\def\<{\langle}
\def\>{\rangle}
\newcommand{\OCB}{\mathring{\CB}}
\newcommand{\BC}{\ensuremath{\mathbb {C}}\xspace}
\newcommand{{\BG}}{\ensuremath{\mathbb {G}}\xspace}
\newcommand{{\BK}}{\ensuremath{\mathbb {K}}\xspace}
\newcommand{\BN}{\ensuremath{\mathbb {N}}\xspace}
\newcommand{\BP}{\ensuremath{\mathbb {P}}\xspace}
\newcommand{\BR}{\ensuremath{\mathbb {R}}\xspace}
\newcommand{\BZ}{\ensuremath{\mathbb {Z}}\xspace}
\newcommand{\CB}{\ensuremath{\mathcal {B}}\xspace}
\newcommand{\CD}{\ensuremath{\mathcal {D}}\xspace}
\newcommand{\CX}{\ensuremath{\mathcal {X}}\xspace}
\newcommand{\CZ}{\ensuremath{\mathcal {Z}}\xspace}
\newcommand{\Rp}{\mathbb{R}_{>0}}
\newcommand{\ul}{\underline}
\def\tW{\tilde W}
\def\kk{\mathbf k}
\newtheorem{thm}{Theorem}
\newtheorem{theorem}{Theorem}
\newtheorem{prop}[thm]{Proposition}
\newtheorem{lem}[thm]{Lemma}
\newtheorem{cor}[thm]{Corollary}
\theoremstyle{definition}
\newtheorem{defi}[thm]{Definition}
\newtheorem{example}[thm]{Example}
\newtheorem{remark}[thm]{Remark}
\numberwithin{equation}{section}
\numberwithin{thm}{section}
\renewcommand{\to}{%
   \ifbool{@display}{\longrightarrow}{\rightarrow}%
   }
\let\shortmapsto\mapsto
\renewcommand{\mapsto}{%
   \ifbool{@display}{\longmapsto}{\shortmapsto}%
   }
\newlength{\olen}
\newlength{\ulen}
\newlength{\xlen}
\newcommand{\xra}[2][]{%
   \ifbool{@display}%
      {\settowidth{\olen}{$\overset{#2}{\longrightarrow}$}%
       \settowidth{\ulen}{$\underset{#1}{\longrightarrow}$}%
       \settowidth{\xlen}{$\xrightarrow[#1]{#2}$}%
       \ifdimgreater{\olen}{\xlen}%
          {\underset{#1}{\overset{#2}{\longrightarrow}}}%
          {\ifdimgreater{\ulen}{\xlen}%
             {\underset{#1}{\overset{#2}{\longrightarrow}}}
             {\xrightarrow[#1]{#2}}}}%
      {\xrightarrow[#1]{#2}}
   }
\newcommand{\xyra}[2][]{%
   \settowidth{\xlen}{$\xrightarrow[#1]{#2}$}%
   \ifbool{@display}%
      {\settowidth{\olen}{$\overset{#2}{\longrightarrow}$}%
       \settowidth{\ulen}{$\underset{#1}{\longrightarrow}$}%
       \ifdimgreater{\olen}{\xlen}%
          {\mathrel{\xymatrix@M=.12ex@C=3.2ex{\ar[r]^-{#2}_-{#1} &}}}%
          {\ifdimgreater{\ulen}{\xlen}%
             {\mathrel{\xymatrix@M=.12ex@C=3.2ex{\ar[r]^-{#2}_-{#1} &}}}
             {\mathrel{\xymatrix@M=.12ex@C=\the\xlen{\ar[r]^-{#2}_-{#1} &}}}}}%
      {\mathrel{\xymatrix@M=.12ex@C=\the\xlen{\ar[r]^-{#2}_-{#1} &}}}%
   }
\newcommand{\xla}[2][]{%
   \ifbool{@display}%
      {\settowidth{\olen}{$\overset{#2}{\longleftarrow}$}%
       \settowidth{\ulen}{$\underset{#1}{\longleftarrow}$}%
       \settowidth{\xlen}{$\xleftarrow[#1]{#2}$}%
       \ifdimgreater{\olen}{\xlen}%
          {\underset{#1}{\overset{#2}{\longleftarrow}}}%
          {\ifdimgreater{\ulen}{\xlen}%
             {\underset{#1}{\overset{#2}{\longleftarrow}}}
             {\xleftarrow[#1]{#2}}}}%
      {\xleftarrow[#1]{#2}}
   }
\newcommand{\isoarrow}{%
   \ifbool{@display}{\overset{\sim}{\longrightarrow}}{\xrightarrow\sim}%
   }
\begin{document}

\title[]{Total positivity in twisted product of flag varieties}
\author[Huanchen Bao]{Huanchen Bao}
\address{Department of Mathematics, National University of Singapore, Singapore.}
\email{huanchen@nus.edu.sg}

\author[Xuhua He]{Xuhua He}
\address{Department of Mathematics and New Cornerstone Science Laboratory, The University of Hong Kong, Pokfulam, Hong Kong, Hong Kong SAR, China}
\email{xuhuahe@hku.hk}

\keywords{flag varieties, Kac-Moody groups, double Bruhat cells, total positivity}
\subjclass[2010]{14M15, 20G44, 15B48}

\begin{abstract}

We show that the totally nonnegative part of the twisted product of flag varieties of a Kac-Moody group admits a cellular decomposition, and the closure of each cell is a topological manifold with boundary. We also establish explicit parameterizations  of each totally positive cell.

In the special cases of double flag varieties and braid varieties, we show that the totally nonnegative parts are regular CW complexes homeomorphic to  closed balls. 
Moreover, we prove that the link of any totally nonnegative double Bruhat cell in a reductive group is a regular CW complex homeomorphic to a closed ball, solving an open problem of Fomin and Zelevinsky. 
\end{abstract}

\maketitle

\section{Introduction}
\subsection{Flag varieties} We first give a quick review of the totally nonnegative flag varieties. 

Let $G$ be a connected Kac-Moody group, split over $\BR$. Let $B^+=T U^+$ be a Borel subgroup of $G$ and let $B^-=T U^-$ be the opposite Borel subgroup. Let $W$ be the Weyl group of $G$. In \cite{Lus-1} and \cite{Lu-positive}, Lusztig introduced the totally nonnegative monoid $$G_{\ge 0}=U^+_{\ge 0} T_{>0} U^-_{\ge 0}=U^-_{\ge 0} T_{>0} U^+_{\ge 0}.$$

Let $\CB=G/B^+$ be the flag variety. It admits the decomposition into open Richardson varieties $\CB=\sqcup \mathring{\CB}_{v, w}$, where $v$ and $w$ are elements in the Weyl group $W$, and the open Richardson variety $ \mathring{\CB}_{v, w}$, by definition, is the intersection of the Schubert cell $B^+ \dot w   B^+/ B^+$ and the opposite Schubert cell $B^- \dot v   B^+/ B^+$. The totally nonnegative flag variety $\CB_{\ge 0}$ is defined as the closure of $U^-_{\ge 0}  B^+/ B^+$ in $\CB$ with respect to Hausdorff topology. 

The totally nonnegative flag is a ``remarkable polyhedral subspace'' (cf. \cite{Lus-1}). In particular, we have 
\begin{enumerate}
    \item (decomposition into cells) $\CB_{\ge 0}=\sqcup \CB_{v, w, >0}$, where each totally positive stratum $\CB_{v, w, >0}=\CB_{\ge 0} \cap  \mathring{\CB}_{v, w}$ is a cell and admits an explicit parametrization (see \cite{MR} and \cite{BH20});
    
    \item the closure of each totally positive stratum is a union of totally positive strata (see \cite{Ri06} and \cite{BH22}); 
    
    \item the closure of each totally positive stratum is homeomorphic to a closed ball (see \cite{GKL} and \cite{BH22});
    
    \item If $G$ is a connected reductive group, then the flag variety $\CB$ admits a natural duality which stabilizes the totally nonnegative part $\CB_{\ge 0}$ and sends each totally positive stratum to another totally positive stratum (see \cite{Lus-1} and \cite{Lu-2}). 
\end{enumerate}

\subsection{Main result} The main object in this paper is the twisted product of flag varieties $$\CZ=G \times^{B^+} G \times^{B^+} \cdots \times^{B^+} G/B^+ \quad (n \text{ factors}).$$

For any sequence $\ul{w} = (w_1, \cdots, w_n)$ of elements in $W$, we define $$\mathring{\CZ}_{\ul{w}}=B^+ \dot w_1 B^+ \times^{B^+} B^+ \dot w_2 B^+ \times^{B^+} \cdots \times^{B^+} B^+ \dot w_n B^+/B^+.$$ This is analogous to Schubert cell in the flag $\CB$. For any $v \in W$, we define  $\mathring{\CZ}^v$ as $m \i(B^- \dot v B^+/B^+)$, where $m: \CZ \to \CB$ is the convolution product. This is analogous to the opposite Schubert cell in the flag $\CB$. The open Richardson variety of $\CZ$ is defined by $\mathring{\CZ}_{v, \ul{w}}=\mathring{\CZ}_{\ul{w}} \cap \mathring{\CZ}^v$. These open Richardson varieties include as special cases the Bott-Samelson varieties and the braid varieties. 

We define the totally nonnnegative part $\CZ_{\ge 0}$ of $\CZ$ to be the Hausdorff clsoure of $(U^-_{\ge 0}, U^-_{\ge 0}, \ldots, U^-_{\ge 0} B^+/B^+)$ in $\CZ$. We set $ \CZ_{v, \ul{w}, >0}=\CZ_{\ge 0} \cap \mathring{\CZ}_{v, \ul{w}}$ and call it a {\it totally positive stratum } of $\CZ$. We have the decomposition $\CZ_{\ge 0}=\sqcup \CZ_{v, \ul{w}, >0}$.

The goal of this paper is to show that the totally nonnegative twisted product of flag is still a ``remarkable polyhedral subspace''. 

\begin{theorem}\label{thm:main} [Theorem~\ref{thm:Zp}, Proposition~\ref{prop:TM} $\&$ Proposition~\ref{prop:duality}]
(1) Each stratum $ \CZ_{v, \ul{w}, >0}$ is a topological cell and we have an explicit parameterization of the cell; 


(2) The closure of each totally positive stratum in $\CZ$ is a union of some totally positive strata in $\CZ$; 

(3) The closure of each totally positive stratum in $\CZ$ is a topological manifold with boundary, where the boundary consists of all lower dimensional strata. 

(4) If $G$ is a connected reductive group, we have a natural duality on $\CZ_{\ge 0}$ generalizing Lusztig's duality on totally nonnegative flag varieties.
\end{theorem}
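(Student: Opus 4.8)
I would deduce the four assertions from their flag-variety counterparts by induction on the number $n$ of factors, using the forgetful map $p\colon\CZ\to\CZ^{(n-1)}$ onto the first $n-1$ factors, where $\CZ^{(n-1)}=G\times^{B^+}\cdots\times^{B^+}G/B^+$ has $n-1$ factors. In the description of a point of $\CZ$ as a chain of flags $x_0=B^+,x_1,\ldots,x_n$ with $x_i$ the image of the $i$-th partial convolution, the map $p$ simply discards $x_n$, and it is a Zariski-locally trivial $\CB$-bundle. Under $p$ the open Richardson variety $\mathring{\CZ}_{v,\ul{w}}$ maps onto the Schubert-type piece $\mathring{\CZ}^{(n-1)}_{(w_1,\ldots,w_{n-1})}$, and its fibre over $x_{n-1}$ is the Richardson-type locus $\{x_n:\inv(x_{n-1},x_n)=w_n,\ \inv(B^-,x_n)=v\}$ of $\CB$, in which one reference flag ($B^-$) is fixed while the other ($x_{n-1}$) moves with the base point. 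The whole argument is an induction that pushes the known structure of $\CB_{\ge0}$ through this bundle.

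\textbf{Part (1).} The first step is to check that $p$ restricts to a surjection $\CZ_{\ge0}\twoheadrightarrow\CZ^{(n-1)}_{\ge0}$, and that over each totally positive stratum of $\CZ^{(n-1)}$ there is a section into $\CZ_{\ge0}$ given by an element of $U^-_{\ge0}$. Such a section trivializes $p$ over the stratum and identifies the corresponding fibre of $\CZ_{\ge0}$ with a closed totally nonnegative Richardson variety $\overline{\CB_{v',w_n,\ge0}}$ of $\CB$, where $v'$ is computed from $v$ and from the stratum of $\CZ^{(n-1)}$. It follows that $\CZ_{v,\ul{w},>0}$ is a bundle over a totally positive stratum of $\CZ^{(n-1)}$ with fibre the cell $\CB_{v',w_n,>0}$; base and fibre are cells, hence so is $\CZ_{v,\ul{w},>0}$, and the Marsh--Rietsch/Bao--He parametrization of the fibre, glued to the inductively produced parametrization of the base through the $U^-_{\ge0}$-section, yields the explicit parametrization asserted in Theorem~\ref{thm:Zp}. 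The delicate point is the compatibility of the various $U^-_{\ge0}$-actions with the bundle, i.e.\ that the section can be chosen in a subtraction-free way in the coordinates of the base.

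\textbf{Parts (2) and (3).} Closure relations (2) follow from (1): in the subtraction-free coordinates just produced, letting a coordinate tend to $0$ or to $\infty$ lands in a boundary stratum of the predicted type, so $\overline{\CZ_{v,\ul{w},>0}}$ contains the union of the strata indexed by the elements $\preceq(v,\ul{w})$ for a suitable order on the index set; since $\ul{w}$ is fixed, only finitely many such strata occur, their union is compact, hence closed, and equality follows --- this is the twisted-product analogue of Rietsch's boundary argument. For (3) I would show by induction on $n$ that $\overline{\CZ_{v,\ul{w},\ge0}}$ is a regular CW complex homeomorphic to a closed ball: parts (1) and (2) exhibit it as a finite CW candidate, and by the standard criterion it then suffices that each closed stratum be a ball and that the face poset be thin, the latter being pure combinatorics of the index poset. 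For the ball property, the bundle of the first paragraph presents $\overline{\CZ_{v,\ul{w},\ge0}}$ as the total space of a bundle over $\overline{\CZ^{(n-1)}_{v'',\ul{w}',\ge0}}$ --- a ball by induction --- with fibre $\overline{\CB_{v',w_n,\ge0}}$, a ball by the regularity theorem for closed totally nonnegative Richardson varieties of $\CB$ (Galashin--Karp--Lam, Bao--He); one then needs this bundle to be topologically trivial over the contractible base, obtained by a fibrewise collaring argument modeled on the product-structure theorem for $\CB_{\ge0}$.

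\textbf{Main obstacle, and Part (4).} The heart of the matter is this last step: $p$ is only \emph{algebraically} locally trivial, whereas one needs its restriction to the totally nonnegative parts to be \emph{topologically} trivial over each closed base stratum --- equivalently, that the total space is again a ball and not merely a bundle of balls. Establishing this ``product structure'' for $\CZ_{\ge0}$, by reducing it to the flag-variety case while keeping control of how the moving reference flag $x_{n-1}$ sweeps out a totally nonnegative cell, is where the real work lies. Part (4) is then essentially formal: for $G$ reductive let $\Theta$ be the pinning-preserving anti-automorphism with $\Theta(U^\pm)=U^\mp$ and $\Theta|_T=\id$ underlying Lusztig's duality on $\CB_{\ge0}$, and define $\eta\colon\CZ\to\CZ$ by reversing the chain $x_0,\ldots,x_n$, applying $\Theta$, and renormalizing by $\dot w_0$ so as to land back in $\CZ$; since $\Theta$ swaps $U^+_{\ge0}$ and $U^-_{\ge0}$ and the $\dot w_0$-twist restores the sign conventions, $\eta$ preserves $\CZ_{\ge0}$, and computing its effect in the coordinates of (1) shows that it carries $\CZ_{v,\ul{w},>0}$ onto the stratum indexed by the evident dual of $(v,\ul{w})$, which is Proposition~\ref{prop:duality}. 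Finally, since braid varieties and, up to a torus factor, the links of totally nonnegative double Bruhat cells occur as closed totally positive strata of suitable $\CZ$, assertion (3) gives the two consequences announced in the abstract.
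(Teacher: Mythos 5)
Your route (induction on the number of factors via the forgetful projection $p\colon \CZ_n\to\CZ_{n-1}$) is genuinely different from the paper's, but it founders at exactly the step you flag as ``the heart of the matter'', and the fix you propose there is not merely hard --- it is false. You need the restriction of $p$ to a closed totally nonnegative stratum to be a topologically trivial bundle with fibre a closed TNN Richardson ball over the closed base stratum. Already for $G=SL_2$, $n=2$ this fails: identifying $\CZ_{\ge0}$ with the triangle $\{(a,b):0\le a\le b\le\infty\}\subset\BP^1\times\BP^1$ (the paper's own example), the closed top cell is the whole triangle and $p$ is the first coordinate; the fibre over $a\in[0,\infty)$ is the interval $[a,\infty]$ while the fibre over $a=\infty$ is a point. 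The fibre type jumps on the boundary of the base because the ``moving reference flag'' $x_{n-1}$ degenerates relative to $B^-$ there, so no fibrewise collaring or product-structure argument can make $\overline{\CZ_{v,\ul{w},\ge0}}$ a product (or even a fibre bundle) over the closed base cell. The closed cell is still a ball, but one cannot get this from bundle triviality; some replacement for the sphere-recognition of the boundary is unavoidable. Your sketch of (2) has a related soft spot: ``only finitely many strata occur, their union is compact, hence closed'' assumes the closedness one is trying to prove, and degenerating one coordinate at a time only gives one inclusion of the closure relation.

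For comparison, the paper never fibres $\CZ$ over a smaller twisted product. It embeds $\CZ$ (up to a locally trivial $(\BC^*)^{n-1}$-fibration $\pi\colon\CZ'\to\CZ$) into the flag variety of a thickened Kac--Moody group $\tilde G$, so that each $\mathring{\CZ}_{v,\ul w}$ is covered by a genuine open Richardson variety $\mathring{\tilde\CB}_{i(v),th(\ul w)}$; parts (1)--(2) and the local manifold-with-boundary structure are then imported from the Kac--Moody results of \cite{BH22} (this is why even the double flag of a reductive group needs non-affine Kac--Moody input), while the ball statement (3) is obtained globally: the face poset $\hat Q$ is shown to be pure, thin and shellable via Dyer's twisted Bruhat orders, Bj\"orner's theorem identifies each boundary as a sphere, and the generalized Poincar\'e conjecture plus Brown's collar theorem then yield the closed ball. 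Your proposal omits this combinatorial ingredient (thinness alone does not suffice), and without it the induction in (3) has no way to recognize the boundary as a sphere once the bundle-triviality claim is abandoned. Part (4) of your proposal is essentially the paper's duality (reverse the chain, apply the pinning involution, twist by $\dot w_0$) and is fine in spirit, though the verification that the reversed tuple stays in $\CZ_{\ge0}$ requires the small shuffling computation with $B^+_{>0}$ that the paper carries out, not just the statement that $U^\pm_{\ge0}$ are swapped.
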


 
\subsection{Special case: braid varieties}
A braid $\ul{s}$ is a sequence $(s_{i_1}, \dots, s_{i_n})$ of simple reflections in $W$. Let $w = s_{i_1} \ast \cdots \ast s_{i_n}$ be the Demazure product. The closed (resp. open) braid variety is defined as $\CZ_{w, \ul{s}}$ (resp. $\mathring{\CZ}_{w, \ul{s}}$). When $G$ is a reductive group, the open braid varieties contain open Richardson varieties as special cases. 

As a special case of Theorem \ref{thm:main}, we see that the totally nonnegative braid variety is a ``remarkable polyhedral subspace". We further establish the regularity theorem in this case.

\begin{theorem}[Theorem~\ref{thm:braid}]
The totally nonnegative braid variety $\CZ_{w, \ul{s}, \ge0 }$ is regular CW complex homeomorphic to a closed ball. 
\end{theorem}

The cluster algebra structure on the coordinate ring of the open braid variety $\mathring{\CZ}_{w, \ul{s}}$ for reductive groups has recently been established by Casals, Gorsky, Gorsky, Le, Shen, and Simental in \cite{CGG} and by Galashin, Lam,  Sherman-Bennett, and Speyer in \cite{GLBS, GLB}. It is interesting to compare the total positivity structure with the cluster algebra structure.

The totally nonnegative braid variety $\CZ_{w, \ul{s}, \ge0 }$ also provides a geometric realization  of the subword complex considered by Knutson and Miller in \cite[Corollary~3.8]{KM}.

\subsection{Special case: double flag varieties} In this subsection, we consider the twisted product of flag varieties with $2$ factors. In this case, $\CZ$ is isomorphic to the double flag $\CB \times \CB$. Under this identification, the open Richardson varieties in $\CZ$ are the intersections of the $B^+ \times B^-$-orbits and the diagonal $G$-orbits in $\CB \times \CB$. Below, we give an explicit example of totally nonnegative double flag varieties.  

\begin{example}
Let $G=SL_2$. In this case $\CB \cong \BP^1$ and $\CB_{\ge 0}=[0, \infty]$. The totally nonnegative double flag variety $\CZ_{\ge 0}=\{(a, b); a, b \in [0, \infty], b-a \ge 0\}$. So $\CZ_{\ge 0}$ is a proper subspace of $\CB_{\ge 0} \times \CB_{\ge 0}$ defined by some nontrivial inequalities.
\end{example}

The notion of totally nonnegative double flag varieties was introduced by Webster and Yakimov in \cite{WY}. The  totally nonnegative double flag variety of $SL_2$ above was the only case understood for many years. The essential difficulty is to handle the extra nontrivial inequalities arising from the double flag. There is also a representation-theoretical construction of $\CZ_{\ge 0}$ relying on the canonical bases on tensor products of $G$-modules. The difficulty in studying $\CZ_{\ge 0}$ versus $\CB_{\ge 0} \times \CB_{\ge 0}$ can also be seen from the difficulty in constructing (geometrically or categorically) canonical bases on tensor products of $G$-modules.

As a special case of Theorem \ref{thm:main}, we see that the totally nonnegative double flag variety is a ``remarkable polyhedral subspace''. This in particular verifies a conjecture of Webster and Yakimov in \cite{WY}. We further establish the regularity theorem for totally nonnegative double flag varieties. 

 \begin{theorem}[Theorem~\ref{thm:double}]
    The totally nonnegative double flag variety is a regular CW complex homeomorphic to a closed ball.
\end{theorem}

\subsection{Special case: double Bruhat cells} The double Bruhat cells of a reductive group are the intersection of the $B^+ \times B^+$-orbits and the $B^- \times B^-$-orbits on $G$. They play an important role in the theory of cluster algebras (see \cite{FZ}).

It is easy to see that the maximal torus $T$ of $G$ acts freely on each double Bruhat cell $G^{s, t} $ of $G$. We denote by $L^{s,t} = G^{s, t}  / T$ the reduced double Bruhat cell.  

We may identify the quotient space $G/T$ with a subvariety of $\CX$ sending a reduced double Bruhat cell $L^{s, t}$ to a stratum $\mathring{\CX}^u_{v, w}$ of the double flag variety. Moreover, under this identification, the image of the totally positive part $G^{s, t }_{>0}:=G^{s, t} \cap G_{\ge 0}$ of the double Bruhat cells $G^{s , t}$ coincides with the totally positive part $ {\CX}^u_{v, w, >0}$ of the stratum $\mathring{\CX}^u_{v, w}$ in the double flag variety $\CX$. We obtain 

\begin{theorem}[Theorem~ \ref{thm:link}]
The link of each totally positive double Bruhat cell of a connected reductive group is a regular CW complex homeomorphic to a closed ball. 
\end{theorem}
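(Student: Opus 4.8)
The plan is to deduce the statement about links of totally positive double Bruhat cells from the main theorem by a careful transfer of structure along the identification of $G/T$ with a locally closed subvariety of the double flag variety $\CX = \CB \times \CB$. First I would make precise the dictionary: the reduced double Bruhat cell $L^{s,t} = G^{s,t}/T$ is identified with a stratum $\mathring{\CX}^u_{v,w}$ (equivalently an open Richardson variety $\mathring{\CZ}_{v,\ul w}$ with $n=2$), and under this identification the totally positive part $G^{s,t}_{>0}/T$ is carried onto the totally positive stratum $\CX^u_{v,w,>0} = \CZ_{v,\ul w,>0}$. This identification is an isomorphism of semialgebraic sets compatible with the Hausdorff topology, so any topological statement proved for the closure of $\CZ_{v,\ul w,>0}$ inside $\CZ_{\ge0}$ transfers verbatim to the corresponding closure inside $G/T$ equipped with its totally nonnegative structure. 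By Theorem~\ref{thm:main}(1)--(3), the closure $\overline{\CZ_{v,\ul w,>0}}$ is a regular CW complex homeomorphic to a closed ball whose open cells are exactly the smaller totally positive strata $\CZ_{v',\ul w',>0}$ contained in the closure.

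Next I would pass from the closed ball to its link. The cell $\CZ_{v,\ul w,>0}$ carries a free, real-analytic action of the multiplicative group $\BR_{>0}$ (coming from scaling $T_{>0}$, equivalently the $\BR_{>0}$-action on $G^{s,t}_{>0}$ by a suitable cocharacter, or more intrinsically the contracting flow used already in the ball statement), and this action extends continuously to the closure and preserves the stratification. Quotienting the pointed ball $\overline{\CZ_{v,\ul w,>0}} \setminus \{*\}$ by this $\BR_{>0}$-action — or, concretely, intersecting with a small sphere around the distinguished point — produces the link. The key point is that a regular CW complex homeomorphic to a closed ball, equipped with a free contracting $\BR_{>0}$-action fixing a single interior point whose closure the action respects, has link a regular CW complex homeomorphic to a sphere of one lower dimension; and the link of the open top stratum inside that sphere is then an open ball, with closure a regular CW ball by the same closure-relations. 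I would formalize this by invoking the general principle (as in \cite{GKL}, \cite{BH22}) that the link of a totally positive stratum inside a totally nonnegative regular CW ball is again a regular CW ball, provided the contracting flow is well-behaved — here supplied by the $T_{>0}$-action on $G^{s,t}$.

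The main obstacle I expect is not the abstract topology but checking that the $\BR_{>0}$-action on $\CX^u_{v,w,>0}$ inherited from $T_{>0}$ acting on $G^{s,t}$ is compatible with the contracting flow used in the proof of Theorem~\ref{thm:main}(3), so that the link computation is legitimate and the cell structure descends. Concretely one must verify: (i) the identification $G^{s,t}/T \cong \mathring{\CX}^u_{v,w}$ is equivariant for the relevant $\BR_{>0}$-actions up to reparametrization; (ii) the action has the distinguished point of the ball (the image of $U^-_{\ge0} \times U^-_{\ge0} B^+/B^+$ landing in the minimal stratum) as its unique fixed point in the closure; and (iii) the action is free on the complement of that fixed point, which follows from the freeness of the $T$-action on $G^{s,t}$ noted in the excerpt. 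Once these compatibilities are in place, the conclusion is immediate: the link of $G^{s,t}_{>0}$ is homeomorphic to the link of $\CZ_{v,\ul w,>0}$ in $\overline{\CZ_{v,\ul w,>0}}$, which by Theorem~\ref{thm:main}(3) together with the link-of-a-ball principle is a regular CW complex homeomorphic to a closed ball, settling the open problem of Fomin and Zelevinsky.

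Finally, I would remark that the regular CW structure on this link is the expected one: its cells are indexed by the totally positive strata $\CX^{u'}_{v',w',>0}$ lying in the closure of $\CX^u_{v,w,>0}$ but not equal to it, i.e.\ by the corresponding order relation on the indexing data for double Bruhat cells, matching the combinatorics predicted by Fomin and Zelevinsky. This follows formally from Theorem~\ref{thm:main}(2), which identifies the closure relations among totally positive strata, combined with the quotient description of the link.
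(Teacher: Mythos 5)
Your proposal analyzes the wrong object, and this is a fatal gap rather than a stylistic difference. The link of a totally positive double Bruhat cell $L^{v,w}_{>0}=G^{v,w}_{>0}/T$ is its link \emph{inside the ambient space} $L_{\ge 0}=\pi(G_{\ge 0})$: it is a transverse link built from the star ${\rm Star}_{\ge(v',\ul{w}')}$ of the corresponding cell of $\CZ_{\ge 0}$, i.e.\ from the union of all totally positive strata whose closures \emph{contain} the given cell, so its cells are indexed by strata lying \emph{above} $(vw_0,(w,w_0))$ in the face poset. Your construction instead works inside the closure $\overline{\CZ_{v,\ul{w},>0}}$ of the cell itself, which only records the strata \emph{below} it; your final paragraph makes the confusion explicit when you index the link's cells by strata ``lying in the closure'' of the given stratum --- exactly backwards. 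A sanity check: for the minimal cell $(v,w)=(e,e)$ the closure of the corresponding stratum is a single point, yet its link is the full link of the identity in $G_{\ge0}/T$, which is the original Fomin--Zelevinsky object. Consequently Theorem~\ref{thm:main}(3) alone cannot yield Theorem~\ref{thm:link}. The paper's proof needs genuinely more: Proposition~\ref{prop:TM} gives a stratified product decomposition of the star (via the thickening map and the product structure of \cite{BH22}), the link is then defined by projecting to the transverse factor, embedding in $V^{\lambda,\lambda}(\BR)$ and cutting with a small sphere, and the ball/regularity statement is Proposition~\ref{prop:lk}, whose proof reduces to links in Kac--Moody flag varieties for $\tilde G\times\tilde G$ and invokes \cite[Proposition~4.4]{BH22}.

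Two further problems would remain even under your reading. The contracting flow you invoke is not available: the $T_{>0}$-action on $G^{v,w}$ is precisely what is quotiented out to form $L^{v,w}=G^{v,w}/T$, so it acts trivially there and cannot furnish the free $\BR_{>0}$-action (your item (iii) cites freeness of the very action that has been killed); moreover ``a regular CW ball with a free contracting $\BR_{>0}$-action has regular CW link'' is not a citable formal principle --- in \cite{GKL}, \cite{Her} and \cite{BH22} such statements are proved by specific arguments, not deduced abstractly. Finally, the transfer from $L_{\ge 0}$ to $\CZ_{\ge 0}$ is not ``verbatim'': one must show that $L_{\ge 0}=\overline{\pi(G_{>0})}$ and that $L^{v,w}_{>0}\to\CZ_{vw_0,(w,w_0),>0}$ is surjective, which the paper does in a separate lemma using Lusztig's identity $U^+_{>0}\dot w_0 B^+/B^+=U^-_{>0}B^+/B^+$; your proposal assumes this compatibility without argument.
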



This solves an open problem of Fomin and Zelevinsky (see \cite[Page 9]{Pos} and \cite[Page 571]{GKL}). It also generalizes the work of Hersh \cite{Her} on the cells in $U^-_{\ge 0}$. The special case for $GL_n$ was first proved by Galashin, Karp and Lam in \cite{GKL} using the embedding of the double Bruhat cells in $GL_n$ into the Grassmannian $Gr(n, 2n)$ due to Postnikov and the regularity of postroid cells in $Gr(n, 2n)$. Our approach for $GL_n$ is different from \cite{GKL}.

\subsection{Our strategy} 


One key ingredient of our approach is the ``thickening'' map. Let $G$ be the Kac-Moody group we start with and $I$ be the set of simple roots. Let $\tilde I=I \sqcup \{\infty_1, \infty_2, \dots, \infty_{n-1}\}$ be the thickening of $I$ obtained by adding $n-1$ vertices labeled with $\infty_1$, $\dots$, $\infty_{n-1}$ and the edges $\Leftrightarrow$ between the vertices $\infty_i$ and the vertices in $I$ (see Figure \ref{fig:1} for $G= SL_4$ and $ n =3$). Let $\tilde G$ be the corresponding ``thickening'' Kac-Moody group.

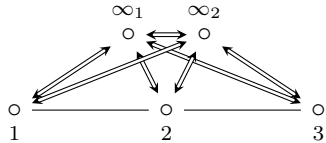
\begin{figure}[h]
\centering
\begin{tikzpicture}[baseline=0]
	\node  (a) at (0,0) {$\circ$};
	\node   at (0,-0.3) {$\scriptstyle 1$};
	\node (b) at (2,0) {$\circ$};
	\node at (2, -0.3) {$\scriptstyle 2$};
	\node (c) at (4,0) {$\circ$};
	\node at (4, -0.3) {$\scriptstyle 3$};
	\draw (a) -- (b) -- (c);
	\node (d) at (1.5,1) {$\circ$};
	\node at (1.5,1.3) {$\scriptstyle \infty_1$};
		\node (e) at (2.5,1) {$\circ$};
	\node at (2.5,1.3) {$\scriptstyle \infty_2$};
	\draw[>=stealth, double, double distance=1pt, <->] (a) --  (d) ;
	\draw[>=stealth, double, double distance=1pt, <->] (b) -- (d);
	\draw[>=stealth, double, double distance=1pt, <->] (c) -- (d);
		\draw[>=stealth, double, double distance=1pt, <->] (a) --  (e) ;
	\draw[>=stealth, double, double distance=1pt, <->] (b) -- (e);
	\draw[>=stealth, double, double distance=1pt, <->] (c) -- (e);
	\draw[>=stealth, double, double distance=1pt, <->] (d) -- (e);
	\end{tikzpicture}
\caption{$\tilde G$ for $SL_4$}\label{fig:1}
\end{figure}

We construct in Proposition \ref{prop:Z} a locally trivial $(\BC^*)^{n-1}$-fibration from a subvariety $\CZ' \subset \tilde{\CB}$ to $\CZ$. The projection map sends an open Richardson variety in $\tilde{\CB}$ to a stratum $\mathring{\CZ}_{v, \ul{w}}$ in $\CZ$. Moreover, the fibration is compatible with the totally positive structure on the twisted product of flag varieties $\CZ$ and on the thickening flag variety $\tilde \CB$. We then deduce the local structure on the totally nonnegative twisted product of flag varieties $\CZ_{\ge 0}$ from the local structure on the totally nonnegative flag variety $\tilde \CB$ established in \cite{BH22}.  

It is worth mentioning that even for the totally nonnegative double flag varieties of reductive groups, our proof still relies on the knowledge of the totally nonnegative flag of Kac-Moody groups (of non-finite, non-affine type) in \cite{BH22}.

\vspace{.2cm}
\noindent {\bf Acknowledgement: } HB is supported by MOE grants A-0004586-00-00 and A-0004586-01-00. XH is partially supported by the New Cornerstone Foundation through the New Cornerstone Investigator Program and the Xplorer Prize, and by Hong Kong RGC Grants 14300021 and 14300023. We thank George Lusztig for helpful discussions and for the question on \S\ref{subsec:Lusztig}, and we thank Steven Karp for informing us of the background on the open problem of Fomin and Zelevinsky. We thank Lauren Williams for helpful discussions on shellability of posets.
 
\section{Flag varieties and their totally nonnegative parts}

\subsection{Kac-Moody groups} 
A {\it Kac-Moody root datum} is a quintuple $$\CD=(I, A, X, Y, (\a_i)_{i \in I}, (\a^\vee_i)_{i \in I}),$$ where $I$ is a finite set, $A=(a_{ij})_{i, j \in I}$ is a symmetrizable generalized Cartan matrix in the sense of \cite[\S 1.1]{Kac}, $X$ is a free $\BZ$-module of finite rank with $\BZ$-dual $Y$, and the elements $\a_i$ of $X$ and $\a^\vee_i$ of $Y$ such that $\<\a^\vee_j, \a_i\>=a_{ij}$ for $i, j \in I$. 

Let $\Delta^{re} = \{w ( \pm \alpha_i) \in X \mid i \in I, w \in W\} \subset X$ be the set of real roots. Then $\D^{re}=\D^{re}_+ \sqcup \D^{re}_-$ is the union of positive real roots and negative real roots. Let $\kk$ be an algebraically closed field. The {\it minimal Kac-Moody group} $G$ associated with the Kac-Moody root datum $\CD$ is the group generated by the torus $T=Y \otimes_\BZ \kk^\times$ and the root subgroup $U_\a \cong \kk$ for each real root $\a$, subject to the Tits relations \cite{Ti87}. Let $U^+  \subset G $ (resp. $U^-  \subset G$) be the subgroup generated by $U_\a$ for $\a \in \D^{re}_+$ (resp. $\a \in \D^{re}_-$). Let $B^{\pm}  \subset G $ be the Borel subgroup generated by $T$ and $U^{\pm}$. 
For each $i \in I$, we fix isomorphisms $x_i: \BC \to U_{\a_i}$ and $y_i: \BC \to U_{-\a_i}$ such that the map
\[
\begin{pmatrix} 1 & a  \\ 0 & 1 \end{pmatrix} \mapsto x_i(a), \begin{pmatrix} b & 0  \\ 0 & b \i \end{pmatrix} \mapsto \a^\vee_i(a), \begin{pmatrix} 1 & 0  \\ c & 1 \end{pmatrix} \mapsto y_i(c)
\] defines a homomorphism $SL_2 \to G$. The data $(T, B^+, B^-, x_i, y_i; i \in I)$ is called a {\it pinning} for $G$.

Let $W$ be the Weyl group of $G$. For $i \in I$, we denote by $s_i \in W$ the corresponding simple reflection and write $\dot{s}_i = x_{i}( 1) y_i (-1) x_{i}(1) \in G$. For any $w \in W$ with reduced expression $w = s_{i_1} \cdots s_{i_n}$, we define $\dot{w} = \dot{s}_{i_1} \cdots \dot{s}_{i_n} \in G$. It is known that $\dot{w}$ is independent of the choice of the reduced expressions.

Let $J \subset I$ (not necessarily of finite type). We denote by $P^+_J$ the standard parabolic subgroup of $G$ corresponding to $J$ and $P^-_J$ the opposite parabolic subgroup of $G$. Let $L_J=P^+_J \cap P^-_J$ be the standard Levi subgroup. Let $W_J$ be the subgroup of $W$ generated by $\{s_j\}_{j \in J}$. Then $W_J$ is the Wey group of $L_J$. For any parabolic subgroup $P$, we denote by $U_P$ its unipotent radical. We have the Levi decomposition $P^\pm_J=L_J \ltimes U_{P^\pm_J}$. 



\subsection{Flag varieties}\label{sec:flag}
We denote by $\CB=G/B^+$ the (thin) full flag variety \cite{Kum}, equipped with the ind-variety structure.  Let $v, w \in W$. Let $\mathring{\CB}_w =B^+ \dot{w} B^+/B^+$ be the {\it Schubert cell} corresponding to $w$, $\mathring{\CB}^v=B^- \dot{v} B^+/B^+$ be the {\it opposite Schubert cell} corresponding to $v$ and $\mathring{\CB}_{v, w}=\mathring{\CB}_w \cap \mathring{\CB}^v$ be the {\it open Richardson variety} corresponding to the pair $(v, w)$. It follows from \cite{Kum1} that 

(a) {\it $\mathring{\CB}_{v, w} \neq \emptyset$ if and only if $v \le w$. In this case, $\mathring{\CB}_{v, w}$ is irreducible of dimension $\ell(w)-\ell(v)$.}

We have the decompositions $\CB=\sqcup_{w \in W} \mathring{\CB}_w=\sqcup_{v \in W} \mathring{\CB}^v=\sqcup_{v \le w} \mathring{\CB}_{v, w}$. Let $\CB_w, \CB^v, \CB_{v, w}$ be the (Zariski) closure of $\mathring{\CB}_w, \mathring{\CB}^v, \mathring{\CB}_{v, w}$ respectively. By \cite[Proposition~7.1.15 \& 7.1.21]{Kum}, we have
$$
\CB_w=\bigsqcup_{w' \le w} \mathring{\CB}_{w'}, \qquad \CB^v=\bigsqcup_{v' \ge v} \mathring{\CB}^{v'}, \qquad \CB_{v, w}=\bigsqcup_{v \le v' \le w' \le w} \mathring{\CB}_{v', w'}.
$$

\subsection{Expressions and subexpressions}\label{subssec:posiex}
An {\it expression} is a sequence 
$$
\mathbf w=(t_1, t_2, \ldots, t_n)
$$
in $I \sqcup \{1\}$. In this case, we write $w=t_1 t_2 \cdots t_n \in W$, and we say that $\mathbf w$ is an expression of $w$. The expression $\mathbf w$ is called {\it reduced} if $n=\ell(w)$. In this case, $t_i \in I$ for all $i$. 

A subexpression of $\mathbf w$ is a sequence $\mathbf w'=(t'_1, t'_2, \ldots, t'_n)$, where $t'_i \in \{t_i, 1\}$ for all $i$. If $\mathbf w$ is an reduced expression, then the subexpression $\mathbf w'$ is called {\it positive} if $t'_1 \cdots t'_{i-1}<t'_1 \cdots t'_{i-1} t_i$ for all $i$. 

By \cite[Lemma 3.5]{MR}, for any $w, w' \in W$ with $w' \le w$ and for any reduced expression $\mathbf w$ of $w$, there exists a unique positive subexpression for $w'$ in $\mathbf w$. We denote it by $\mathbf w'_{+}$.

\subsection{Totally nonnegative flag varieties}\label{Rp-BH} 
We follow \cite{Lus-1} and \cite{Lu-2}. Let $U^+_{\ge 0}$ be the submonoid of $G$ generated by $x_i(a)$ for $i \in I$ and $a \in \Rp$ and $U^-_{\ge 0}$ be the submonoid of $G$ generated by $y_i(a)$ for $i \in I$ and $a \in \Rp$. Let $T_{>0}$ be the identity component of $T(\BR)$. The totally nonnegative monoid $G_{\ge 0}$ is defined to be the submonoid of $G$ generated by $U^{\pm}_{\ge 0}$ and $T_{>0}$. By \cite[\S 2.5]{Lu-2}, $G_{\ge 0}=U^+_{\ge 0} T_{>0} U^-_{\ge 0}=U^-_{\ge 0} T_{>0} U^+_{\ge 0}$.

Let $\CB_{\ge 0}=\overline{U^-_{\ge 0} B^+/ B^+}$ be the closure of $U^-_{\ge 0} B^+/ B^+$ in $\CB$ with respect to the Hausdorff topology. For any $v \le w$, let $
\CB_{v, w, >0}=\mathring{\CB}_{v, w} \bigcap \CB_{\ge 0}$. Then $\CB_{\ge 0}=\bigsqcup_{v \le w} \CB_{v, w, >0}$. 


Following \cite[Definition~5.1]{MR}, we set 
\[
 G_{\bf v_+, \bf w, >0}=\{g_1 g_2 \cdots g_n\vert  g_j=\dot s_{i_j}, \text{ if } t_{i_j}=s_{i_j}; \text{ and } g_j \in y_{i_j}(\Rp), \text{ if } t_{i_j}=1\}.
\]
Note that the obvious map $\Rp^{\ell(w) - \ell(v)} \rightarrow  G_{\bf v_+, \bf w, >0}$
is a homeomorphism.

By \cite[Theorem~11.3]{MR} for the reductive groups and \cite[Theorem~4.10]{BH20} for the Kac-Moody groups, we have the following result. 

(a) {\it Let $v \le w$. For any reduced expression ${\bf w}$ of $w$, the map $g \mapsto g \cdot B^+$ gives a homeomorphism $G_{\bf v_+, \bf w, >0} \cong \CB_{v, w, >0}$.
In particular, $\CB_{v, w, >0} \cong \Rp^{\ell(w)-\ell(v)}$ is a topological cell.}

\begin{prop}\label{thm:closure}\cite[Proposition~5.2]{BH22}
        Let $v \le w$. Then 
        
        (1) $\CB_{v, w, >0}$ is a connected component of $\CB_{v, w}(\BR)$. 
        
        (2) The Hausdorff closure of $\CB_{v, w, >0}$ equals $\sqcup_{v \le v' \le w' \le w} \CB_{v', w', >0}$. 
\end{prop}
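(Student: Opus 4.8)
The plan is to prove part~(1) as a purely local statement --- that $\CB_{v,w,>0}$ is clopen in the real locus of $\mathring{\CB}_{v,w}$ --- and part~(2) by separating an easy upper bound for the Hausdorff closure from a harder matching lower bound obtained via explicit one-parameter degenerations of the Marsh--Rietsch parametrization of \S\ref{Rp-BH}. For part~(1), first I would record that $\mathring{\CB}_{v,w}$ is a smooth irreducible variety over $\kk$, defined over $\BR$, of dimension $d:=\ell(w)-\ell(v)$; hence $\mathring{\CB}_{v,w}(\BR)$ is a smooth real $d$-manifold (possibly empty or disconnected). By \S\ref{Rp-BH}(a) the set $\CB_{v,w,>0}$ is homeomorphic to $\Rp^{d}$, so it is a connected $d$-manifold, and by invariance of domain its inclusion into $\mathring{\CB}_{v,w}(\BR)$ is open. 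It is also closed there: since $\CB_{\ge 0}$ is Hausdorff-closed in $\CB$ and contains $\CB_{v,w,>0}$, the Hausdorff closure obeys $\overline{\CB_{v,w,>0}}\cap\mathring{\CB}_{v,w}\subseteq\CB_{\ge 0}\cap\mathring{\CB}_{v,w}=\CB_{v,w,>0}$. A nonempty, connected, clopen subset of a manifold is a connected component, which is~(1).

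\textbf{Part (2), the inclusion ``$\subseteq$''.} The Hausdorff closure $\overline{\CB_{v,w,>0}}$ lies in every Hausdorff-closed subset of $\CB$ containing $\CB_{v,w,>0}$; in particular it lies in $\CB_{\ge 0}$ and in the Zariski closure $\CB_{v,w}$. Combining the stratification $\CB_{v,w}=\bigsqcup_{v\le v'\le w'\le w}\mathring{\CB}_{v',w'}$ from \S\ref{sec:flag} with $\CB_{\ge 0}\cap\mathring{\CB}_{v',w'}=\CB_{v',w',>0}$ and intersecting yields $\overline{\CB_{v,w,>0}}\subseteq\bigsqcup_{v\le v'\le w'\le w}\CB_{v',w',>0}$.

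\textbf{Part (2), the inclusion ``$\supseteq$''.} This is the heart of the matter. I would reduce it to covering relations in the poset $P$ of subintervals of $[v,w]$: a covering $(v',w')\lessdot(v'',w'')$ in $P$ has either $v'=v''$ with $w'\lessdot w''$, or $w'=w''$ with $v''\lessdot v'$. Since Hausdorff closure is idempotent, it suffices to prove $\CB_{v',w',>0}\subseteq\overline{\CB_{v'',w'',>0}}$ for each covering and then induct on the codimension $d-(\ell(w')-\ell(v'))$. This amounts to two claims, for arbitrary $u\le w'\lessdot w$ and $v\lessdot v'\le w$:
\[
\text{(A)}\ \ \CB_{u,w',>0}\subseteq\overline{\CB_{u,w,>0}},
\qquad
\text{(B)}\ \ \CB_{v',w,>0}\subseteq\overline{\CB_{v,w,>0}}.
\]
For (A): choose a reduced word $\mathbf w$ of $w$ which, upon deleting one letter at a position $k$, becomes a reduced word $\mathbf w'$ of $w'$ with $k$ unselected by the positive subexpression $\mathbf u_+$ of $u$ in $\mathbf w$ (a compatible choice exists by the subexpression calculus of \cite{MR}); then sending the parameter $a_k\to 0^+$, so that $y_{i_k}(a_k)\to 1$, identifies the $a_k=0$ face of $\overline{G_{\mathbf u_+,\mathbf w,>0}}$ with $G_{\mathbf u_+',\mathbf w',>0}$ for the induced positive subexpression $\mathbf u_+'$, and applying $g\mapsto gB^+$ gives (A). For (B): choose a reduced word $\mathbf w$ of $w$ and a position $k$ outside the positive subexpression $\mathbf v_+$ of $v$ such that adjoining $k$ produces the positive subexpression $\mathbf v'_+$ of $v'$ (possible because $v\lessdot v'\le w$); then send $a_k\to+\infty$. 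Using the rank-one identity $y_i(a)=\alpha_i^\vee(-a^{-1})\,x_i(a)\,\dot s_i\,x_i(a^{-1})$, which at the level of flags gives $y_i(a)B^+/B^+\to\dot s_iB^+/B^+$, together with the fact that left multiplication by the fixed element $g_1\cdots g_{k-1}$ is a homeomorphism of $\CB$, one shows $(g_1\cdots y_{i_k}(a_k)\cdots g_n)B^+/B^+$ converges in $\CB$ to the corresponding point of $G_{\mathbf v'_+,\mathbf w,>0}\cdot B^+/B^+=\CB_{v',w,>0}$, which gives (B). In both cases part~(1), applied to the smaller interval, ensures that the limit set fills the entire target cell rather than a proper piece.

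\textbf{Main obstacle.} The genuine difficulty is claim (B): making sense of and identifying the $a_k\to\infty$ limit of a long product inside the infinite-dimensional ind-variety $\CB$, and matching it precisely with the Marsh--Rietsch cell of $(v',w)$. This forces one to choose the reduced word of $w$ and the degenerating positions in tandem with the combinatorics of positive subexpressions, and it is exactly where the passage from finite type to general Kac--Moody type requires the refined analysis underpinning \cite{BH20} and \cite{BH22}; by comparison, the reduction to coverings and claim~(A) are essentially formal.
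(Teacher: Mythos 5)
A preliminary remark on the comparison you ask for: the paper does not prove this proposition at all; it is imported verbatim from \cite[Proposition~5.2]{BH22} (with the finite-type case due to Rietsch), so there is no in-paper argument to measure yours against. Your outline is closest in spirit to Rietsch's finite-type closure argument via degenerations of the Marsh--Rietsch parametrization, whereas \cite{BH22} works in the Kac--Moody setting. Your part (1) and the inclusion $\subseteq$ in part (2) are essentially fine (modulo justifying that $\mathring{\CB}_{v,w}(\BR)$ is a $d$-manifold in the Kac--Moody setting, so that invariance of domain applies), and the reduction of $\supseteq$ to covering relations is sound.

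The genuine gap is in your claim (B). It is not true that, with the other parameters held fixed, $(g_1\cdots y_{i_k}(a_k)\cdots g_n)B^+$ converges as $a_k\to+\infty$ to the point obtained by replacing $y_{i_k}(a_k)$ with $\dot s_{i_k}$: the rank-one identity only lets you absorb the trailing $x_{i_k}(a_k^{-1})$ into $B^+$ when position $k$ is the last letter; otherwise the diverging factors must be pushed past $g_{k+1},\dots,g_n$, which changes them. Concretely, take $G=SL_3$, $w=s_1s_2$, $v=e$, $v'=s_1$: the point $y_1(a)y_2(b)B^+$ has line $[1:a:0]$ and plane with Pl\"ucker coordinates $[1:b:ab]$, so with $b$ fixed and $a\to\infty$ it converges to the $T$-fixed point $\dot w B^+$, i.e.\ it lands in $\CB_{w,w,>0}$ and not in $\CB_{s_1,w,>0}$ (whose points have plane coordinates $[1:0:c]$ with $c>0$); reaching $\CB_{s_1,w,>0}$ requires coupled limits ($b\to 0$ with $ab\to c$). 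Organizing such coupled degenerations so as to hit every point of the target cell is exactly the hard analytic--combinatorial content of Rietsch's and Bao--He's proofs, and your proposal does not supply it. Two further gaps: the closing appeal to part (1) does not ``fill the cell'' --- $\overline{\CB_{v,w,>0}}\cap\CB_{v',w',>0}$ is closed in the connected cell, but you would also need it to be open there (or exhibit every point as a limit), and connectedness alone gives nothing; and in both (A) and (B) the existence of a reduced word and a degenerating position compatible with the positive subexpression is asserted rather than proved (it is not automatic, since a given reduced word of $w'$ need not embed into any reduced word of $w$). Claim (A) itself, the $a_k\to 0^+$ face, is unobjectionable once such a compatible choice is secured, because the induced subexpression stays positive.
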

For reductive groups, the above results were established by Rietsch in \cite{Ri99, Ri06}.

\subsection{Twisted product of flag varieties}
Let $n \in \BN$. Recall the twisted product $\CZ_n$ of the flag varieties: $$\CZ_n=G \times^{B^+} G \times^{B^+} \cdots \times^{B^+} G/B^+ \quad (n \text{ factors}).$$

We often write $\CZ = \CZ_n$ whenever the number of factors is clear. The isomorphism $G^n \rightarrow G^n$, $(g_1, g_2, \dots, g_n) \mapsto (g_1, g_1g_2, \dots, g_1g_2\cdots g_n)$ induces an isomorphism $$\a: \CZ_n \to \CB^n, \qquad (g_1, g_2, \ldots, g_n) \mapsto (g_1 B^+, g_1 g_2 B^+, \ldots, g_1 g_2 \cdots g_n B^+).$$ In particular, we have the convolution product $$m: \CZ_n \to \CZ_1=G/B^+, \qquad  (g_1, \ldots, g_n) \mapsto g_1 g_2 \cdots g_n B^+.$$ 

The stratification on $\CZ_n$ we are interested in is different from the stratification on $\CB^n$ given by the product of $\mathring{\CB}_{-,-}$. Let $\ul{w} = (w_1, \cdots, w_n)$ be a sequence of elements in $W$. Recall that the Schubert cell in $\CZ_n$ associated with $\ul{w}$ is defined to be $$\mathring{\CZ}_{n, \ul{w}}=B^+ \dot w_1 B^+ \times^{B^+} B^+ \dot w_2 B^+ \times^{B^+} \cdots \times^{B^+} B^+ \dot w_n B^+/B^+.$$ 

Also, recall that the opposite Schubert cell in $\CZ_n$ associated with $v \in W$ is defined to be $$\mathring{\CZ}^v_n=\{x \in \CZ_n; m(x) \in \mathring{\CB}^v\}.$$

The open Richardson variety of $\CZ_n$ is defined by $\mathring{\CZ}_{n, v, \ul{w}}=\mathring{\CZ}_{n, \ul{w}} \cap \mathring{\CZ}^v_n$. 

\subsection{Monoid structure on $W$} 
We follow \cite[\S 1]{He-min}. For any $x, y \in W$, the subset $\{x y'; y' \le y\}$ (resp. $\{x' y; x' \le x\}$, $\{x' y'; x' \le x, y' \le y\}$) of $W$ contains a unique maximal element (with respect to the Bruhat order $\le$). Moreover, we have $$\max\{x y'; y' \le y\}=\max\{x' y; x' \le x\}=\max\{x' y'; x' \le x, y' \le y\}.$$ We denote this element by $x \ast y$ and call it the {\it Demazure product} of $x$ and $y$. Moreover, $(W, \ast)$ is a monoid and the Demazure product can be determined by the following two rules
\begin{itemize}
    \item $x \ast y = x y$ if $x, y \in W$ such that $\ell(x y) = \ell(x) + \ell(y)$;
    \item $s \ast w = w$ if $s \in I$, $w \in \tW$ such that $s w < w$.
\end{itemize}


We define the maps $m_\bullet, m_\ast: W^n \to W$ by \begin{gather*} 
m_{\bullet}(w_1, w_2, \ldots, w_n)=w_1 w_2 \cdots w_n, \\ 
m_{\ast}(w_1, w_2, \ldots, w_n)=w_1 \ast w_2 \ast \cdots \ast w_n.
\end{gather*}

We have the following result describing the nonemptiness pattern of the Richardson varieties in $\CZ_n$. 

\begin{lem}\label{lem:nonempty}
Let $v \in W$ and $\ul{w} \in W^n$. Then $\mathring{\CZ}_{n,v, \ul{w}} \neq \emptyset$ if and only if $v \le m_{\ast}(\ul{w})$.
\end{lem}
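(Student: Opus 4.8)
The plan is to reduce the non-emptiness of $\mathring{\CZ}_{n,v,\ul w}$ to a statement about the image of the Schubert cell $\mathring{\CZ}_{n,\ul w}$ under the convolution product $m\colon\CZ_n\to\CB$. First I would note that, directly from the definitions, $\mathring{\CZ}_{n,v,\ul w}=\mathring{\CZ}_{n,\ul w}\cap m\i(\mathring{\CB}^v)$, so that $\mathring{\CZ}_{n,v,\ul w}\neq\emptyset$ if and only if $m(\mathring{\CZ}_{n,\ul w})\cap\mathring{\CB}^v\neq\emptyset$. Since the double coset $B^+\dot w_iB^+=B^+w_iB^+$ depends only on $w_i$, and $m$ carries the class of $(g_1,\dots,g_n)$ to $g_1\cdots g_nB^+$, this image equals the image of the set product $(B^+w_1B^+)(B^+w_2B^+)\cdots(B^+w_nB^+)\subseteq G$ under the projection $G\to\CB$.

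The main step is to show, by induction on $n$, that as a subset of $G$ one has
\[
(B^+w_1B^+)(B^+w_2B^+)\cdots(B^+w_nB^+)=\bigsqcup_{u\in S(\ul w)}B^+uB^+
\]
for a subset $S(\ul w)\subseteq W$ with a unique Bruhat-maximal element, equal to $m_\ast(\ul w)$; dividing by $B^+$ then gives $m(\mathring{\CZ}_{n,\ul w})=\bigsqcup_{u\in S(\ul w)}\mathring{\CB}_u$. The case $n=1$ is trivial. For the inductive step I would write the product as $(B^+w_1B^+)\cdot X$, where $X=(B^+w_2B^+)\cdots(B^+w_nB^+)=\bigsqcup_{u'\in S'}B^+u'B^+$ with $\max S'=m_\ast(w_2,\dots,w_n)$ by induction, and then argue by a further induction on $\ell(w_1)$ using the standard multiplication rule in a group with a $BN$-pair: $(B^+sB^+)(B^+uB^+)=B^+suB^+$ if $su>u$, and $(B^+sB^+)(B^+uB^+)=B^+suB^+\sqcup B^+uB^+$ if $su<u$, for $s\in I$. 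This shows that for each fixed $u'$ the set $(B^+w_1B^+)(B^+u'B^+)$ is a union of cells $B^+uB^+$ with $u\le w_1\ast u'$ and contains the cell $B^+(w_1\ast u')B^+$. Combining this with the associativity and monotonicity of the Demazure product $\ast$ (both recalled above, and both immediate from $x\ast y=\max\{x'y':x'\le x,\ y'\le y\}$) gives $w_1\ast u'\le w_1\ast m_\ast(w_2,\dots,w_n)=m_\ast(\ul w)$ for all $u'\in S'$, with equality when $u'=m_\ast(w_2,\dots,w_n)$; taking the union over $u'\in S'$ yields the claim.

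To finish, I would combine the two steps. We have $\mathring{\CZ}_{n,v,\ul w}\neq\emptyset$ iff $\mathring{\CB}^v\cap\mathring{\CB}_u\neq\emptyset$ for some $u\in S(\ul w)$, and $\mathring{\CB}^v\cap\mathring{\CB}_u=\mathring{\CB}_{v,u}$ is non-empty exactly when $v\le u$ by \S\ref{sec:flag}(a). If such a $u$ exists then $v\le u\le m_\ast(\ul w)$; conversely, if $v\le m_\ast(\ul w)$ we may simply take $u=m_\ast(\ul w)\in S(\ul w)$. Hence $\mathring{\CZ}_{n,v,\ul w}\neq\emptyset$ if and only if $v\le m_\ast(\ul w)$.

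I do not expect a genuine obstacle; this is a warm-up lemma. The one point needing care is the inductive bookkeeping of the set $S(\ul w)$ of Schubert cells occurring in $m(\mathring{\CZ}_{n,\ul w})$ — checking that the unique top cell in an iterated product of Bruhat cells is governed by the Demazure product $m_\ast(\ul w)$. The Kac-Moody framework creates no difficulty, since $m(\mathring{\CZ}_{n,\ul w})$ is contained in the finite-dimensional projective variety $\CB_{m_\ast(\ul w)}$.
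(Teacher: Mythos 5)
Your proposal is correct and follows essentially the same route as the paper: both reduce non-emptiness of $\mathring{\CZ}_{n,v,\ul{w}}$ to whether $m(\mathring{\CZ}_{n,\ul{w}})$ meets $\mathring{\CB}^v$, sandwich that image via $\mathring{\CB}_{m_\ast(\ul{w})} \subset m(\mathring{\CZ}_{n,\ul{w}}) \subset \bigsqcup_{w'\le m_\ast(\ul{w})}\mathring{\CB}_{w'}$, and conclude with the criterion $\mathring{\CB}_{v,w'}\neq\emptyset \Leftrightarrow v\le w'$. The only difference is that you spell out, via the Tits-system multiplication rules and induction, the standard cell decomposition of $(B^+w_1B^+)\cdots(B^+w_nB^+)$ that the paper simply asserts.
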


\begin{proof}
We have $\mathring{\CB}_{m_{\ast}(\ul{w})} \subset m(\mathring{\CZ}_{n, \ul{w}}) \subset \sqcup_{w' \le m_{\ast}(\ul{w})} \mathring{\CB}_{w'}$. Now the statement follows from the equivalence of the following three conditions: 
\[
	v \le m_{\ast}(\ul{w}) \quad\Longleftrightarrow \quad\mathring{\CB}^v \cap \mathring{\CB}_{m_{\ast}(\ul{w})} \neq \emptyset  \quad \Longleftrightarrow \quad\mathring{\CB}^v \cap (\sqcup_{w' \le m_{\ast}(\ul{w})} \mathring{\CB}_{w'}) \neq \emptyset. \qedhere
\]
\end{proof}



\section{The thickening  map}
 Let $ n \in \BZ_{> 0}$. We write $\CZ = \CZ_n$ throughout this section.
\subsection{The thickening group}
We construct the set of simple roots and the associated generalized Cartan matrix of the thickening Kac-Moody $\tilde{G}$ from the original Kac-Moody group ${G}$. 

The set $\tilde{I}$ of simple roots is the union of $I$ with new vertices $\{\infty_1, \ldots, \infty_{n-1}\}$. The Dynkin diagram of $\tilde I$ is obtained from the Dynkin diagram of $I$ by adding an edge $i \Longleftrightarrow \infty_l$ for any $i \in I$ and $1 \le l \le n-1$. In other words, the generalized 
Cartan matrix $\tilde A=(\tilde a_{i, j})_{i, j \in \tilde I}$ is defined as follows:\footnote{In fact, the argument in this paper works once $\tilde a_{i, \infty_l}, \tilde a_{\infty_l, i} \neq 0$ for $i \in I$ and $1 \le l \le n-1$. } 
\begin{itemize}
	\item $\tilde a_{i, j}=a_{i, j}$ for $i, j \in I$; 
	\item $\tilde a_{i, \infty_l}=\tilde a_{\infty_l, i}=-2$ for $i \in I$ and $1 \le l \le n-1$;  
	\item $\tilde a_{\infty_l, \infty_l}=2$ for $1 \le l \le n-1$;
	$\tilde a_{\infty_l, \infty_{l'}}=-2$ for $1 \le l, l' \le n-1$ with $l \neq l'$. 
\end{itemize}

Let $\tilde G$ be the minimal Kac-Moody group of simply connected type associated to $(\tilde I, \tilde A)$ and $\tilde W$ be its Weyl group. Let $\tilde W_I$ be the parabolic subgroup of $\tilde W$ generated by simple reflections in $I$ and $\tilde L_I$ be the standard Levi subgroup of the parabolic subgroup $\tilde P^+_I$ of $\tilde G$ that corresponds to $I$. We have natural identifications 
\begin{gather*}
    i: W \to \tilde W_I \text{ and } i: G \to \tilde L_I.
\end{gather*}

We fix a pinning $(\tilde{T}, \tilde{B}^+ , \tilde{B}^-, x_i, y_i; i \in \tilde{I})$ of $\tilde{G}$ that is compatible with the pinning of $G$ via the identification $i$. We define the thickening maps 
\begin{gather*}
    th: W^n \to \tilde W, (w_1, w_2, \ldots, w_n) \mapsto w_1 s_{\infty_1} w_2 s_{\infty_2} \cdots s_{\infty_{n-1}} w_n; \\
    th: (\BC^*)^{n-1} \times G^n  \to \tilde G, (a_1, \dots, a_{n-1} , g_1, \ldots, g_n) \mapsto g_1 y_{\infty_1}(a_1) g_2 \cdots g_{n-1}y_{\infty_{n-1}}(a_{n-1}) g_n.
\end{gather*}

We denote by $\tilde \CB$ the flag variety of $\tilde G$, and denote by $\mathring{\tilde \CB}^-$, $ \mathring{\tilde \CB}_-$, $\mathring{\tilde \CB}_{-, -}$ the Schubert cells, the opposite Schubert cells and the open Richardson varieties of $\tilde \CB$ respectively.

\subsection{Positive subexpressions in $W^n$}\label{subsec:posi}
Let $\ul{w} = (w_1, \dots, w_n)$ and $\underline{w'} = (w'_1, \dots, w'_n)$ be in $W^n$. We say $\underline{w'}  \le \underline{w}$ if $w'_i \le w_i$ in the usual Bruhat order for all $i$.

\begin{defi} Let $\ul{w}, \ul{v} \in W^n$ with $\ul{v} \le \ul{w}$. We say that $\ul{v}$ is positive in $\ul{w}$ if for any $i$, $v_1' \le v_1, \dots, v'_{i-1} \le v_{i-1}, v'_i \le w_i$ with $v_1' \cdots v_{i-1}' v'_{i} =  v_1  \cdots v_{i-1} v _{i}$ implies that $(v_1' , \cdots, v_i') =(v_1 , \cdots, v_i)$. 
\end{defi}

Let $\ul{w} = (w_1, \dots, w_n) \in W^n$. We choose an expression $\mathbf w_i$ for each $w_i$. We write $\ul{\mathbf w}=(\mathbf w_1; \mathbf w_2; \dots ; \mathbf w_n)$. We then associate  two expressions to it: 
\begin{itemize}
    \item $i(\ul{\mathbf w})=(\mathbf w_1, 1, \mathbf w_2, 1, \dots, 1, \mathbf w_n)$, an expression of $i(m_\bullet(\ul{w}))=w_1 w_2 \cdots w_n$; 
    \item $th(\ul{\mathbf w})=(\mathbf w_1, s_{\infty_1}, \mathbf w_2, \dots,s_{\infty_{n-1}}, \mathbf w_n  )$, an expression of $th(\ul{w})=w_1 s_{\infty_1} w_2 \cdots s_{\infty_{n-1}} w_{n}$. 
\end{itemize}

It is easy to see that 
\begin{itemize}
    \item for $v, v' \in W$, $v \le v'$ in $W$ if and only if $i(v) \le i(v')$ in $\tilde W$; 
    \item for $\ul{w}, \ul{w'} \in W^n$, $\ul{w} \le \ul{w'}$ if and only if $th(\ul{w}) \le th(\ul{w'})$ in $\tilde W$; 
    \item for $v \in W$ and $\ul{w} \in W^n$, $v \le m_\ast(\ul{w})$ if and only if $i(v) \le th(\ul{w})$.
\end{itemize}  

The following result follow  directly from the definition. 

   (a) {\it Let $\ul{w}, \ul{v} \in W^n$ with $\ul{v} \le \ul{w}$. Let $\mathbf w_i$ be a reduced expression of $w_i$ and $(\mathbf v_i)_+$ be the positive subexpression for $v_i$ in $\mathbf w_i$. Then $\ul{v}$ is positive in $\ul{w}$ if and only if $i(\ul{\mathbf v}_+)$ is a positive subexpression of $th(\ul{\mathbf w})$, where $\ul{\mathbf v}_+=((\mathbf v_1)_+; (\mathbf v_2)_+; \dots; (\mathbf v_n)_+)$ and $\ul{\mathbf w}=(\mathbf w_1;   \mathbf w_2; \dots; \mathbf w_n)$} in the sense of \S\ref{subssec:posiex}.

   (b) {\it Let $\ul{w} \in W^n$ and $v \in W$ with $v \le m_\ast(\ul{w})$. Then there exists a unique $\ul{v}$ in $W^n$ with $m_{\bullet}(\ul{v})=v$ and $\ul{v}$ is positive in $\ul{w}$.  In this case, $m_{\ast}(\ul{v})=v$.}

\subsection{The thickening map}
Let $\alpha_{\infty_l}$ be the simple root corresponding to $\infty_l \in \tilde{I}$ for $ 1\le l \le n-1$. We write the subvariety $U_{-\a_{\infty_l}} - \{e\} \subset U_{-\a_{\infty_l}}$ as $U_{-\a_{\infty_l}}^*$.  
\begin{prop}\label{prop:Z}
 Set  $$\CZ'=\tilde P^-_{I} \cdot \tilde B^+/\tilde B^+ \cap \tilde P^+_I (\tilde B^+ \dot s_{\infty_1} \tilde B^+) \tilde P^+_I (\tilde B^+ \dot s_{\infty_2} \tilde B^+) \cdots (\tilde B^+ \dot s_{\infty_{n-1}} \tilde B^+) \tilde P^+_I/\tilde B^+.$$ 
 Then 
 \begin{enumerate}
 \item $\CZ'=  \bigsqcup_{v \le m_\ast(\ul{w})}  \mathring{\tilde{\CB}}_{i(v), th(\ul{w})}$ is a locally closed subvariety of $\tilde \CB$; 
 \item there exists a locally trivial $(\BC^*)^{n-1}$-fibration $\pi: \CZ' \to \CZ$;
 \item the morphism $\pi$ is stratified such that $\pi$ maps $\mathring{\tilde{\CB}}_{i(v), th(\ul{w})}$to $\mathring{\CZ}_{v, \ul{w}}$ via restriction. 
 \end{enumerate}
 \end{prop}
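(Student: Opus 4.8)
The plan is to prove the three assertions in Proposition~\ref{prop:Z} in sequence, reducing everything to the combinatorics of \S\ref{subsec:posi} and the structure of the thickening group.

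\textbf{Step 1: Identifying $\CZ'$ as a union of open Richardson varieties.} First I would unwind the definition of $\CZ'$. The factor $\tilde P^-_I \cdot \tilde B^+/\tilde B^+$ is the (opposite) parabolic orbit, which decomposes as $\sqcup_{v' \in \tilde W_I} \mathring{\tilde\CB}^{i(v')}$ since $\tilde P^-_I = \sqcup_{v' \in \tilde W_I} \tilde B^- \dot{v'} \tilde B^+$ and elements of $\tilde W_I$ are identified with $i(W)$. The second factor, the product $\tilde P^+_I (\tilde B^+ \dot s_{\infty_1} \tilde B^+) \tilde P^+_I \cdots$, I would rewrite using $\tilde P^+_I = \sqcup_{w_j \in \tilde W_I} \tilde B^+ \dot w_j \tilde B^+$ and the fact that multiplication of Schubert cells $\tilde B^+ \dot x \tilde B^+ \cdot \tilde B^+ \dot y \tilde B^+ = \sqcup_{z \le x \ast y, \text{ appropriately}} \tilde B^+ \dot z \tilde B^+$ is controlled by the Demazure product. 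The key point is that since each $\infty_l$ is connected to every vertex of $I$ in $\tilde I$, the simple reflection $s_{\infty_l}$ does not commute with anything in $\tilde W_I$ and in particular $\ell(w_1 s_{\infty_1} w_2 \cdots s_{\infty_{n-1}} w_n) = \sum \ell(w_j) + (n-1)$ for all $w_j \in W$; i.e., $th(\ul w)$ is always ``reduced as written''. This means the set of $w' \in \tilde W$ with $\mathring{\tilde\CB}_{w'}$ meeting this product is exactly $\{th(\ul w) : \ul w \in W^n, th(\ul w) \le (\text{the Demazure product})\}$, but more precisely I want to show the product of Schubert cells equals $\sqcup_{\ul w} \mathring{\tilde\CB}_{th(\ul w)}$ where the union is... hmm, actually since $th$ is length-additive, $\tilde B^+ \dot w_1 \tilde B^+ \dot s_{\infty_1}\tilde B^+ \cdots = \sqcup_{\ul{w'} \le \ul{w}}$-type statement needs care; I expect the product $\tilde P^+_I (\tilde B^+\dot s_{\infty_1}\tilde B^+)\cdots \tilde P^+_I/\tilde B^+$ to equal $\sqcup_{\ul w \in W^n} \mathring{\tilde\CB}_{th(\ul w)}$ outright because every Schubert cell $\mathring{\tilde\CB}_{w_j}$ for $w_j \in \tilde W_I$ appears in $\tilde P^+_I$ and length-additivity makes the convolution a disjoint union. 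Intersecting with $\tilde P^-_I\cdot\tilde B^+/\tilde B^+ = \sqcup_{v' \in \tilde W_I}\mathring{\tilde\CB}^{i(v')}$, and using that $\mathring{\tilde\CB}_{i(v), th(\ul w)} = \mathring{\tilde\CB}^{i(v)}\cap\mathring{\tilde\CB}_{th(\ul w)}$ is nonempty iff $i(v) \le th(\ul w)$ iff $v \le m_\ast(\ul w)$ (the last equivalence from \S\ref{subsec:posi}), gives the decomposition in (1). Local closedness follows since $\CZ'$ is the intersection of the closed subvariety $\tilde P^-_I\cdot\tilde B^+/\tilde B^+$ with an open subvariety (a union of $\tilde B^+$-orbits that is open in the closed parabolic product, by the ind-variety structure).

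\textbf{Step 2: Constructing the fibration $\pi$.} For (2) and (3), I would use the thickening maps. Concretely, given a point of $\CZ'$, I want to write it via the factorization $th: (\BC^*)^{n-1}\times G^n \to \tilde G$. The idea: a point in $\tilde P^-_I\cdot\tilde B^+/\tilde B^+$ lies in the image of $\tilde P^-_I \to \tilde\CB$, and $\tilde P^-_I = \tilde L_I \ltimes U_{\tilde P^-_I} = i(G)\ltimes U_{\tilde P^-_I}$. The condition of lying in the $\infty$-Schubert-cell product then forces, after moving things around with the Tits relations, a unique factorization $g_1 y_{\infty_1}(a_1) g_2 \cdots y_{\infty_{n-1}}(a_{n-1}) g_n \cdot \tilde B^+$ with $g_j \in G$ determined modulo $B^+$ and $a_j \in \BC^*$; the map $\pi$ sends this to $(g_1 B^+, \ldots) \in \CB^n \cong \CZ_n$, i.e. to the $\CZ$-point $(g_1,\ldots,g_n)$. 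The fibers are the $(\BC^*)^{n-1}$ worth of choices of the $a_l$. To make this rigorous and see local triviality I would cover $\CZ$ by opens over which the section is algebraic — e.g. pull back the standard affine charts — and check the trivialization $\pi^{-1}(V)\cong V\times(\BC^*)^{n-1}$ on each. The well-definedness (that different representatives $g_j$ give the same image and same fiber structure) reduces to a computation with $y_{\infty_l}(a)$ and elements of $\tilde B^+$: since $\infty_l$ is joined to all of $I$, conjugating $y_{\infty_l}(a)$ past $\tilde L_I\cap\tilde B^+$ rescales $a$ by a character and stays within $U_{\tilde P^-_I}$-type corrections that can be absorbed, which is exactly why we get a $(\BC^*)^{n-1}$ and not something larger or with monodromy.

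\textbf{Step 3: Compatibility with stratifications.} For (3), I would check $\pi(\mathring{\tilde\CB}_{i(v),th(\ul w)}) = \mathring{\CZ}_{v,\ul w}$ by tracking the two defining conditions. A point $g_1 y_{\infty_1}(a_1)\cdots g_n\tilde B^+$ lies in $\mathring{\tilde\CB}_{th(\ul w)} = \tilde B^+\dot{th(\ul w)}\tilde B^+/\tilde B^+$ iff, by length-additivity of $th$ and the Bruhat decomposition, each $g_j\in B^+\dot w_j B^+$ — this gives membership in $\mathring{\CZ}_{\ul w}$. It lies in $\mathring{\tilde\CB}^{i(v)} = \tilde B^-\dot{i(v)}\tilde B^+/\tilde B^+$ iff the whole product, viewed in $\tilde P^-_I$, projects under $\tilde P^-_I\to\tilde L_I = i(G)$ to something in $i(B^-\dot v B^+)$; and the projection of $g_1 y_{\infty_1}(a_1)\cdots g_n$ to $\tilde L_I$ is $g_1 g_2\cdots g_n$ (all $y_{\infty_l}(a_l)\in U_{\tilde P^-_I}$), so this says $g_1\cdots g_n \in B^-\dot v B^+$, i.e. $m(g_1,\ldots,g_n)\in\mathring{\CB}^v$, which is exactly $\mathring{\CZ}^v$. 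Combining, $\pi$ restricts to a map $\mathring{\tilde\CB}_{i(v),th(\ul w)}\to\mathring{\CZ}_{v,\ul w}$, and it is surjective with $(\BC^*)^{n-1}$-fibers by the same argument as Step 2 restricted to these strata.

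\textbf{Main obstacle.} The delicate point is Step 2: producing the factorization map cleanly and proving it is a \emph{locally trivial} fibration rather than merely a surjection with the right fibers. The technical heart is the commutation computation showing that $y_{\infty_l}(a_l)$ interacts with $\tilde B^+$ and with $i(G)$ in a way that cleanly separates the $(\BC^*)^{n-1}$-parameter from the $\CZ$-data; this is where the specific choice of Cartan entries $\tilde a_{i,\infty_l}=\tilde a_{\infty_l,i}=-2$ (or any nonzero value, per the footnote) is used, and one must be careful that the ind-variety structure on $\tilde\CB$ makes the local trivializations algebraic. I expect the rest — the Bruhat-theoretic bookkeeping in Steps 1 and 3 — to be routine given the length-additivity of $th$ and the equivalences already recorded in \S\ref{subsec:posi}.
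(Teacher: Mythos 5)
Your proposal is correct and follows essentially the same route as the paper: the paper likewise rewrites the parabolic product as a twisted product via convolution, uses the condition $\CZ'\subset \tilde P^-_I\cdot\tilde B^+/\tilde B^+$ to force the $\infty$-factors into $U^*_{-\alpha_{\infty_l}}\tilde B^+$, so that $\CZ'\cong G\times^{B^+}\bigl(U^*_{-\alpha_{\infty_1}}\times G\bigr)\times^{B^+}\cdots$, defines $\pi$ by forgetting the $\BC^*$-coordinates, and proves local triviality by trivializing over the translated big cells $\dot r U^- B^+/B^+$, $r\in W$, exactly as you sketch in Steps 2--3. Two small remarks: $\tilde P^-_I\cdot\tilde B^+/\tilde B^+$ is \emph{open}, not closed, in $\tilde\CB$ (the other factor is the locally closed one, being open in the closed set $\tilde P^+_I\,\tilde P^+_{\infty_1}\cdots\tilde P^+_I/\tilde B^+$), and the commutation you single out as the main obstacle is in fact trivial --- no $i\alpha-j\alpha_{\infty_l}$ with $\alpha$ a positive root supported on $I$ is a root, so $U^+$ commutes with $U_{-\alpha_{\infty_l}}$ and only the torus rescales the $a_l$ --- which is exactly why the forgetting map is well defined and the fibration is locally trivial.
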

 
 \begin{proof}
 Set $\CZ''=\tilde P^+_I (\tilde P^+_{\infty_1}) \tilde P^+_I (\tilde P^+_{\infty_2})\cdots (\tilde P^+_{\infty_{n-1}}) \tilde P^+_I/\tilde B^+$. Then $\CZ''$ is closed in $\tilde \CB$. Moreover, $\tilde P^+_I (\tilde B^+ \dot s_{\infty_1} \tilde B^+) \tilde P^+_I (\tilde B^+ \dot s_{\infty_2} \tilde B^+) \cdots (\tilde B^+ \dot s_{\infty_{n-1}} \tilde B^+) \tilde P^+_I/\tilde B^+$ is open in $\CZ''$. As $\tilde P^-_{I} \cdot \tilde B^+/\tilde B^+$ is open in $\tilde \CB$, $\CZ'$ is locally closed in $\tilde \CB$. This shows (1).

 We have the following  isomorphism via the convolution product 
\begin{align*}
	&G \times ^{B^+} (\tilde B^+ \dot s_{\infty_1} \tilde B^+) \times^{\tilde B^+} G  \times^{B^+} (\tilde B^+ \dot s_{\infty_2} \tilde B^+)  \times^{\tilde B^+} \cdots  \times^{ B^+} (\tilde B^+ \dot s_{\infty_{n-1}} \tilde B^+)  \times^{\tilde B^+} \tilde P^+_I/\tilde B^+ \\
	\cong \,&  G (\tilde B^+ \dot s_{\infty_1} \tilde B^+)   G (\tilde B^+ \dot s_{\infty_2} \tilde B^+) \cdots (\tilde B^+ \dot s_{\infty_{n-1}} \tilde B^+)   G/\tilde B^+\\
	\cong \,& \tilde P^+_I (\tilde B^+ \dot s_{\infty_1} \tilde B^+) \tilde P^+_I (\tilde B^+ \dot s_{\infty_2} \tilde B^+) \cdots (\tilde B^+ \dot s_{\infty_{n-1}} \tilde B^+) \tilde P^+_I/\tilde B^+.
\end{align*}

Here the $B^+$-action on $\tilde B^+ \dot s_{\infty_1} \tilde B^+$ is by multiplication, while the action of $\tilde B^+$ on $G$ is via the quotient $\tilde B^+ \to B^+$.  
Any element of $\CZ'$ is of the form $g_1 y_1 g_2 y_2 \cdots y_{n-1} g_n \tilde B^+/\tilde B^+$, where $g_i \in G$ and $y_i \in \tilde {B}^+ \dot  s_{\infty_i} \tilde B^+$. It follows from $\CZ' \subset \tilde P^-_{I} \cdot \tilde B^+/\tilde B^+$ that $y_i \in \tilde {B}^+ \dot  s_{\infty_i} \tilde B^+ \cap \tilde{B}^-\tilde B^+$. 
Hence we obtain, via restriction, 
\begin{align*}
	Z' \cong & G \times ^{ B^+}  (U_{-\a_{\infty_1}}^* \tilde B^+) \times^{\tilde B^+} G  \times^{  B^+} ( U_{-\a_{\infty_2}}^*  \tilde B^+)  \times^{\tilde B^+} \cdots   \times^{\tilde B^+} \tilde P^+_I/\tilde B^+ \\
	  \cong &G \times ^{ B^+} \big( U_{-\a_{\infty_1}}^* \times G \big)  \times^{  B^+} ( U_{-\a_{\infty_2}}^*  \tilde B^+)  \times^{\tilde B^+} \cdots \cdots   \times^{\tilde B^+} \tilde P^+_I/\tilde B^+.
\end{align*}
The $B^+$-action on $G \times \big( U_{-\a_{\infty_1}}^* \times G)$ is given by $b \cdot (x,y, z) = (xb^{-1}, byb^{-1}, bz)$. Similar $B^+$-actions apply for the other factors. We hence have a well-defined morphism 
\begin{equation}\label{eq:pi1}
	 \pi_1: \CZ' \rightarrow G \times ^{ B^+}  G   \times^{  B^+} ( U_{-\a_{\infty_2}}^*  \tilde B^+)  \times^{\tilde B^+} \cdots \cdots   \times^{\tilde B^+} \tilde P^+_I/\tilde B^+.
\end{equation}
Iterating the construction, we obtain a morphism 
\[
	 \pi: \CZ' \rightarrow G \times ^{ B^+}  G   \times^{  B^+}    \cdots   \times^{ B^+} \tilde P^+_I/\tilde B^+ \cong \CZ.
\]
Part (3) follows immediately. 

We prove (2). Let $r \in W$. We have
\begin{align*}
	\CZ' &\supset (\dot{r}U^- B^+) \times ^{ B^+} \big( U_{-\a_{\infty_1}}^* \times G \big)  \times^{  B^+} ( U_{-\a_{\infty_2}}^*  \tilde B^+)  \times^{\tilde B^+} \cdots \cdots   \times^{\tilde B^+} \tilde P^+_I/\tilde B^+  \\
	&\cong \dot{r}U^-   \times   U_{-\a_{\infty_1}}^* \times G   \times^{  B^+} ( U_{-\a_{\infty_2}}^*  \tilde B^+)  \times^{\tilde B^+} \cdots \cdots   \times^{\tilde B^+} \tilde P^+_I/\tilde B^+ \\
	& \cong U_{-\a_{\infty_1}}^* \times   (\dot{r}U^- B^+) \times ^{ B^+}    G   \times^{  B^+} ( U_{-\a_{\infty_2}}^*  \tilde B^+)  \times^{\tilde B^+} \cdots \cdots   \times^{\tilde B^+} \tilde P^+_I/\tilde B^+.
\end{align*}
This shows that $\pi_1$ in \eqref{eq:pi1} is a locally trivial $\BC^*$-fibration. Part (2) follows by iteration. 
 \end{proof}
 
For any $\ul{r} = (r_1, \dots, r_n) \in W^n$, the thickening map $th: (\BC^*)^{n-1} \times G^n \to \tilde G$ induces a section of the fiber bundle $\pi: \CZ' \rightarrow \CZ$ as follows 
 \begin{align}\label{eq:th}
	  \,\, \BC^* \times ( \dot{r}_1 U^-) \times  \BC^* \times ( \dot{r}_2 U^-) \times \cdots \times (\dot{r}_n U^-)/B^+ \rightarrow  \CZ'. 
 \end{align}
 
 
 The projection $\pi$ in Proposition~\ref{prop:Z} is defined over $\mathbb{R}$, hence induces projections on the set of $\mathbb{R}$-rational points 
\begin{equation}\label{eq:Z}
 \pi: \CZ'(\mathbb{R}) \rightarrow \CZ (\mathbb{R}),  \qquad  \pi: \mathring{\tilde{\CB}}_{i(v), th(\ul{w})}(\BR) \rightarrow \mathring{\CZ}_{v, \ul{w}}(\BR).
\end{equation}

We draw some consequences from Proposition~\ref{prop:Z}.
\begin{cor}\label{cor:irre}
Let $v \in W$ and $\ul{w} = (w_1, w_2, \dots, w_n) \in W^n$ with $v \le m_{\ast}(\ul{w})$. Then
\begin{enumerate}
    \item $\mathring{\CZ}_{v, \ul{w}}$ is irreducible and of dimension $\ell(w_1) +\ell(w_2) + \dots + \ell(w_n)-\ell(v)$;
    \item the Zariski closure of $\mathring{\CZ}_{v, \ul{w}}$ equals $\CZ^v\cap \CZ_{ \ul{w}}=\bigsqcup_{v \le v', \ul{w'} \le \ul{w} \text{ with } v' \le m_{\ast}(\ul{w'})} \mathring{\CZ}_{v',  \ul{w'}}$. 
\end{enumerate}
\end{cor}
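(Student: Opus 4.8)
The plan is to transport the analogous statements about open Richardson varieties in $\tilde\CB$ through the fibration $\pi\colon\CZ'\to\CZ$ of Proposition~\ref{prop:Z}. Since $\pi$ is a locally trivial $(\BC^*)^{n-1}$-fibration and by part (3) it restricts to a fibration $\mathring{\tilde\CB}_{i(v),th(\ul w)}\to\mathring{\CZ}_{v,\ul w}$, the source being irreducible (by \S\ref{sec:flag}(a), since $i(v)\le th(\ul w)$ by the last bullet of \S\ref{subsec:posi}) forces $\mathring{\CZ}_{v,\ul w}$ to be irreducible, and the dimension drops by $n-1$: $\dim\mathring{\CZ}_{v,\ul w}=\big(\ell(th(\ul w))-\ell(i(v))\big)-(n-1)=\big(\ell(w_1)+\cdots+\ell(w_n)+(n-1)-\ell(v)\big)-(n-1)=\sum_i\ell(w_i)-\ell(v)$. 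This proves (1).

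For (2), first I would identify the Zariski closure $\ov{\CZ'}$ inside $\tilde\CB$. From $\CZ'=\bigsqcup_{v\le m_\ast(\ul w)}\mathring{\tilde\CB}_{i(v),th(\ul w)}$ and the closure formula for open Richardson varieties in $\tilde\CB$ recalled in \S\ref{sec:flag}, the closure of each piece $\mathring{\tilde\CB}_{i(v),th(\ul w)}$ is $\bigsqcup_{i(v)\le y'\le y\le th(\ul w)}\mathring{\tilde\CB}_{y',y}$. The key combinatorial point is to check that the indices $y',y$ that actually occur as one varies over all the strata of $\CZ'$ are exactly those of the form $y'=i(v')$, $y=th(\ul{w'})$ with $v'\le v$ (wait—with $v\le v'$), $\ul{w'}\le\ul w$ and $v'\le m_\ast(\ul{w'})$; this is where \S\ref{subsec:posi} and the compatibility of $\le$ with $i$ and $th$ enter, together with the fact that $th(\ul{w'})\le th(\ul w)\iff\ul{w'}\le\ul w$. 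Concretely: any $y$ with $y\le th(\ul w)$ need not be of the form $th(\ul{w'})$, but the ones appearing in the closure of $\CZ'$ inside $\tilde\CB$ — i.e. lying in $\ov{\CZ'}$ — are constrained by $\CZ'\subset\CZ''$ (closed) and by the opposite-Schubert condition, and one shows $\ov{\CZ'}\cap\tilde\CB^{\rm re}$, or rather the closure taken within the ambient strata, meets only the strata $\mathring{\tilde\CB}_{i(v'),th(\ul{w'})}$. Then applying $\pi$ (which is a fibration, hence open, and proper onto its image up to the $(\BC^*)^{n-1}$-factor so that closures are respected appropriately) and using part (3) to match strata, one gets $\ov{\mathring{\CZ}_{v,\ul w}}=\bigsqcup\mathring{\CZ}_{v',\ul{w'}}$ over the stated index set. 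The identification with $\CZ^v\cap\CZ_{\ul w}$ follows because $m\i(\mathring{\CB}^v)$ has closure $m\i(\CB^v)=\CZ^v$ intersected with $\CZ_{\ul w}=\ov{\mathring{\CZ}_{\ul w}}$, using that $m$ and the projections defining $\mathring{\CZ}_{\ul w}$ are fibrations with irreducible fibers.

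The main obstacle I expect is the bookkeeping in step (2): namely verifying that the closure of $\CZ'$ in $\tilde\CB$ contains \emph{no} stratum $\mathring{\tilde\CB}_{y',y}$ beyond those with $y=th(\ul{w'})$, $y'=i(v')$. A priori the Bruhat interval $[i(v),th(\ul w)]$ in $\tilde W$ contains many elements not of the ``thickened'' form, so I cannot simply quote the closure formula for a single Richardson variety; I must use that $\CZ'$ is an \emph{intersection} $\tilde P^-_I\tilde B^+/\tilde B^+\cap\big(\tilde P^+_I(\tilde B^+\dot s_{\infty_1}\tilde B^+)\cdots\tilde P^+_I/\tilde B^+\big)$, so its closure is contained in the intersection of the closures, and the closure of the second factor is $\CZ''=\tilde P^+_I\tilde P^+_{\infty_1}\tilde P^+_I\cdots\tilde P^+_I/\tilde B^+$, whose strata are indexed precisely by $W_I$-double-coset data that forces the $y$-index to be a Demazure product of the shape $m_\ast$ applied to a thickened word. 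Combined with \S\ref{subsec:posi}(a)--(b), this pins down the index set. Once this is settled, transporting along $\pi$ is routine since fibrations are open maps and $(\BC^*)^{n-1}$ is irreducible, so taking closures commutes with $\pi$ on the level of strata.
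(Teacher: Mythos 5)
Your overall strategy is exactly the paper's: the corollary is stated there as an immediate consequence of Proposition~\ref{prop:Z}, transporting irreducibility, dimension and closure relations for $\mathring{\tilde\CB}_{i(v),th(\ul w)}$ along the $(\BC^*)^{n-1}$-fibration $\pi$, and your part (1) (surjection from an irreducible stratum, dimension drop $n-1$, $\ell(th(\ul w))=\sum_i\ell(w_i)+n-1$) is fine.

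In part (2), however, the step you single out as the ``main obstacle'' is handled incorrectly. It is simply false that the closure of $\CZ'$ in $\tilde\CB$, or the ambient space $\CZ''$, only involves strata whose Schubert index is of the thickened form $th(\ul{w'})$: $\CZ''$ is closed and $\tilde B^+$-stable and contains $\mathring{\tilde\CB}_{th(\ul w)}$, hence it contains the whole Schubert variety $\tilde\CB_{th(\ul w)}=\bigsqcup_{y\le th(\ul w)}\mathring{\tilde\CB}_y$, including all the non-thickened $y$ (e.g.\ those obtained by deleting some $s_{\infty_l}$). So the claim that the strata of $\CZ''$ ``force the $y$-index to be a Demazure product of thickened shape'' would fail, and no analysis of $\ov{\CZ'}$ or $\CZ''$ along these lines can succeed. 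Fortunately the obstacle is a red herring: what you need is $\pi^{-1}\bigl(\ov{\mathring{\CZ}_{v,\ul w}}\bigr)$, and since $\pi$ is a surjective open map (being a locally trivial fibration) one has $\pi^{-1}(\ov S)=\ov{\pi^{-1}(S)}$ for any $S\subset\CZ$, where the right-hand closure is taken \emph{inside} $\CZ'$. Thus $\pi^{-1}\bigl(\ov{\mathring{\CZ}_{v,\ul w}}\bigr)=\tilde\CB_{i(v),th(\ul w)}\cap\CZ'$, and intersecting with $\CZ'$ automatically discards every non-thickened stratum, because by Proposition~\ref{prop:Z}(1) the only Richardson strata contained in $\CZ'$ are the $\mathring{\tilde\CB}_{i(v'),th(\ul{w'})}$ with $v'\le m_\ast(\ul{w'})$. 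The order-compatibility bullets of \S\ref{subsec:posi} ($i(v)\le i(v')\Leftrightarrow v\le v'$ and $th(\ul{w'})\le th(\ul w)\Leftrightarrow\ul{w'}\le\ul w$) then give exactly the index set in the statement, and applying $\pi$ stratum by stratum via Proposition~\ref{prop:Z}(3), together with the stratifications $\CZ_{\ul w}=\bigsqcup_{\ul{w'}\le\ul w}\mathring{\CZ}_{\ul{w'}}$ and $\CZ^v=m\i(\CB^v)=\bigsqcup_{v'\ge v}\mathring{\CZ}^{v'}$ and Lemma~\ref{lem:nonempty}, yields both equalities. Your appeal to $\pi$ being ``proper onto its image up to the $(\BC^*)^{n-1}$-factor'' should be replaced by this openness argument; properness is neither available nor needed.
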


We may also deduce Lemma \ref{lem:nonempty} from Proposition \ref{prop:Z} and the nonemptiness pattern of the open Richardson varieties in $\tilde \CB$. 

\subsection{Total positivity on $\CZ $}

Let $\CZ_{ \ge 0}$ be the Hausdorff closure of $(U^-_{\ge 0}, U^-_{\ge 0},  \cdots,  U^-_{\ge 0}B^+/B^+)$ in $\CZ $. For any $v \in W$ and $\ul{w} \in W^n$ with $v \le m_\ast(\ul{w})$, we define
$$\CZ_{v, \ul{w}, >0}=\mathring{\CZ}_{v,  \ul{w}} \cap \CZ_{\ge 0}.$$ 

Now we prove the first main result of this paper. 
    
\begin{thm}\label{thm:Zp}
    Let $v \in W$ and $\ul{w} \in W^ n $ with $v \le  m_\ast(\ul{w})$. Then 

    (1) $\CZ_{v,  \ul{w}, >0}$ is a connected component of $\mathring{\CZ}_{v, \ul{w}}(\BR)$. 
    
    (2) The Hausdorff closure of $\CZ_{v,  \ul{w}, >0}$ equals $\bigsqcup_{v \le v', \ul{w'} \le \ul{w} \text{ with } v' \le m_{\ast}(\ul{w'})} \CZ_{v',  \ul{w'}, >0}$.
    
    (3) Let $\ul{v}  \in W^n $ with $m_\bullet(\ul{v}) = v$ and $\ul{v}$ be positive in $\ul{w}$. Let $\mathbf w_i$ be a reduced expression of $w_i$ and $(\mathbf v_i)_+$ be the positive subexpression for $v_i$ in $\mathbf w_i$ for all $i$. Then we have an isomorphism 
    \begin{align*}
    G_{(\mathbf v_1)_+, \mathbf w_1, >0} \times  G_{(\mathbf v_2)_+, \mathbf w_2, >0} \times \cdots \times G_{(\mathbf v_n)_+, \mathbf w_n, >0} &\to \CZ_{v,  \ul{w}, >0}, \\
    (g_1, g_2, \dots, g_n ) &\mapsto (g_1, g_2, \dots, g_n B^+/B^+).
    \end{align*}
\end{thm}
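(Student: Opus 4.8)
The plan is to deduce Theorem~\ref{thm:Zp} from the corresponding statements for the totally nonnegative flag variety $\tilde{\CB}$ of the thickening group, using the fibration $\pi: \CZ' \to \CZ$ of Proposition~\ref{prop:Z} together with the section induced by the thickening map. The first task is to pin down how total positivity transports across $\pi$. I would argue that the section \eqref{eq:th} restricts, over $\Rp$, to an identification of $\CZ_{v,\ul w,>0}$ with a subspace of $\tilde\CB_{i(v),th(\ul w)}$, and that under this identification the relevant positivity is exactly governed by positive subexpressions. Concretely, I would combine \S\ref{subsec:posi}(a) --- which says $i(\ul{\mathbf v}_+)$ is a positive subexpression of $th(\ul{\mathbf w})$ precisely when $\ul v$ is positive in $\ul w$ --- with the parametrization \S\ref{Rp-BH}(a) of $\tilde\CB_{i(v),th(\ul w),>0}$ by $G_{i(\ul{\mathbf v}_+),th(\ul{\mathbf w}),>0}$. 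The point is that the expression $th(\ul{\mathbf w}) = (\mathbf w_1, s_{\infty_1}, \mathbf w_2, \dots, s_{\infty_{n-1}}, \mathbf w_n)$ has, in its positive subexpression realizing $i(v)$, the letters $s_{\infty_l}$ necessarily retained (since $s_{\infty_l}$ cannot appear in $i(v) \in \tilde W_I$, and positivity forces it to be used going up); thus $G_{i(\ul{\mathbf v}_+),th(\ul{\mathbf w}),>0}$ is a product of the blocks $G_{(\mathbf v_i)_+,\mathbf w_i,>0}$ interleaved with factors $\dot s_{\infty_l}$ — but in the totally nonnegative setting the torus coordinate hidden in $\dot s_{\infty_l} = x_{\infty_l}(1)y_{\infty_l}(-1)x_{\infty_l}(1)$ forces the fibre $(\BC^*)^{n-1}$ to specialize to $(\Rp)^{n-1}$ or to a single point after projecting by $\pi$.

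For part (3), I would make this precise as follows. The map in the statement is the restriction of $(g_1,\dots,g_n)\mapsto(g_1,\dots,g_nB^+/B^+)$, which lands in $\mathring{\CZ}_{v,\ul w}$ by the standard MR/BH20 parametrization of each factor (\S\ref{Rp-BH}(a)) and in $\CZ_{\ge0}$ because each $G_{(\mathbf v_i)_+,\mathbf w_i,>0}$ is a product of $\dot s_{i_j}$'s and elements $y_{i_j}(\Rp)$, hence lies in $U^-_{\ge0}\cdot(\text{stuff})$, and the defining closure property of $\CZ_{\ge0}$ is preserved. For surjectivity and injectivity I would lift to $\tilde\CB$: the section \eqref{eq:th} sends the product $\BC^*\times(\dot r_1U^-)\times\dots\times(\dot r_nU^-)/B^+$ into $\CZ'$, and on the totally nonnegative locus this becomes the parametrization of $\tilde\CB_{i(v),th(\ul w),>0}$ by $G_{i(\ul{\mathbf v}_+),th(\ul{\mathbf w}),>0}$; composing with $\pi$ kills the $(\BC^*)^{n-1}$-directions (more precisely their positive-real incarnations) and leaves exactly the product of the $G_{(\mathbf v_i)_+,\mathbf w_i,>0}$, yielding a homeomorphism onto $\CZ_{v,\ul w,>0}$. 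This simultaneously gives that $\CZ_{v,\ul w,>0}\cong\Rp^{\sum\ell(w_i)-\ell(v)}$ is a cell.

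Parts (1) and (2) I would then get by transporting Proposition~\ref{thm:closure} (for $\tilde\CB$) through $\pi$. For (1): $\mathring{\CZ}_{v,\ul w}(\BR)$ is the base of the real fibration $\pi:\tilde\CB_{i(v),th(\ul w)}(\BR)\to\mathring{\CZ}_{v,\ul w}(\BR)$ of \eqref{eq:Z} with fibre $(\BR^*)^{n-1}$; since $\CZ_{v,\ul w,>0}$ is the image of the connected component $\tilde\CB_{i(v),th(\ul w),>0}$ (which is a connected component of $\tilde\CB_{i(v),th(\ul w)}(\BR)$ by Proposition~\ref{thm:closure}(1)), and since the preimage of $\CZ_{v,\ul w,>0}$ under $\pi$ is a union of fibre components that must meet this component, one deduces $\CZ_{v,\ul w,>0}$ is open and closed in $\mathring{\CZ}_{v,\ul w}(\BR)$, and it is connected as a continuous image of a cell; hence it is a connected component. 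For (2): $\pi$ is proper enough on the relevant closed pieces (or one works inside the closed subvariety $\CZ^v\cap\CZ_{\ul w}$ and its thickening counterpart $\tilde\CB_{i(v),th(\ul w)}$, using Corollary~\ref{cor:irre} to match the stratifications), so the Hausdorff closure of $\CZ_{v,\ul w,>0}$ is the image of the Hausdorff closure of $\tilde\CB_{i(v),th(\ul w),>0}$, which by Proposition~\ref{thm:closure}(2) is $\bigsqcup \tilde\CB_{v'',\ul{w}'',>0}$ over $i(v)\le v''\le w''\le th(\ul w)$; applying $\pi$ and re-indexing via the dictionary of \S\ref{subsec:posi} (the intervals $[i(v),th(\ul w)]$ in $\tilde W$ correspond exactly to pairs $v\le v'$, $\ul{w'}\le\ul w$ with $v'\le m_\ast(\ul{w'})$) gives the claimed union $\bigsqcup_{v\le v',\ \ul{w'}\le\ul w,\ v'\le m_\ast(\ul{w'})}\CZ_{v',\ul{w'},>0}$.

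The main obstacle I anticipate is the bookkeeping in the first paragraph: showing cleanly that the $(\BC^*)^{n-1}$-fibre of $\pi$ restricts, on the totally nonnegative locus, to a trivial (one-point, after quotient) fibre compatibly with the two parametrizations — i.e. that the section \eqref{eq:th} carries $G_{i(\ul{\mathbf v}_+),th(\ul{\mathbf w}),>0}$ onto the total space of $\pi|_{\CZ_{v,\ul w,>0}}$ and that $\pi$ then simply forgets the interleaved $\dot s_{\infty_l}$ data. This requires care about where the $y_{\infty_l}$ coordinates sit relative to the $\Rp$-parametrization of each block and a verification that Hausdorff closures commute with $\pi$ on the pieces in question; everything else is a transcription of the known $\tilde\CB$ results through the dictionary already set up in \S\ref{subsec:posi} and Proposition~\ref{prop:Z}.
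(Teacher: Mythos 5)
Your overall strategy---transporting the known results for $\tilde{\CB}_{\ge 0}$ through the fibration $\pi:\CZ'\to\CZ$ of Proposition~\ref{prop:Z} and the section \eqref{eq:th}---is the same as the paper's, but the two places you defer to ``bookkeeping'' are exactly where the real content lies, and as written they are genuine gaps. All three parts hinge on the identification $\CZ_{v,\ul{w},>0}=\pi\bigl(\tilde{\CB}_{i(v),th(\ul{w}),>0}\bigr)$, i.e.\ on comparing the Hausdorff closure defining $\CZ_{\ge 0}$ downstairs with the totally nonnegative part of $\tilde{\CB}$ upstairs. This is not automatic: over $\BR$ the fibre of $\pi$ is $(\BR^*)^{n-1}$, which has $2^{n-1}$ components of which only one meets $\tilde{\CB}_{\ge 0}$, and $\pi$ is not proper, so neither ``Hausdorff closures commute with $\pi$'' nor your claim that the fibre ``specializes to $(\Rp)^{n-1}$ after projecting by $\pi$'' follows from general topology; a priori a limit point downstairs could be approached only through wrong-sign sectors upstairs, and $\CZ_{\ge 0}\cap\mathring{\CZ}_{v,\ul{w}}$ could be strictly larger than the image of the positive stratum. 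The paper resolves this with the sign-twisting automorphisms $\theta_S$ of $\tilde{G}$ (identity on $G$, flipping the $x_{\infty_l},y_{\infty_l}$ for $l\in S$), which commute with $\pi$ and carry $\tilde{\CB}_{i(v),th(\ul{w}),>0}$ onto the other sign sectors; this is what shows that the closure of $\pi^{-1}(U^-_{\ge0},\dots,U^-_{\ge0}B^+/B^+)$ inside $\CZ'$ is exactly the union over $S$ of the $\theta_S$-translates of the TNN strata, and hence yields both the closure relations in (2) and, in (1), the fact that the real preimage of $\CZ_{v,\ul{w},>0}$ is a union of full connected components. Your proposal has no substitute for this device, and your deduction that $\CZ_{v,\ul{w},>0}$ is open and closed in $\mathring{\CZ}_{v,\ul{w}}(\BR)$ presupposes it.

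For (3), the assertion that ``composing with $\pi$ kills the $y_{\infty_l}$ directions and leaves exactly the product of the $G_{(\mathbf v_i)_+,\mathbf w_i,>0}$'' is the conclusion, not an argument. What the lift gives is an identification of $G_{(\mathbf v_1)_+,\mathbf w_1,>0}\times y_{\infty_1}(\Rp)\times\cdots\times y_{\infty_{n-1}}(\Rp)\times G_{(\mathbf v_n)_+,\mathbf w_n,>0}$ with $\tilde{\CB}_{i(v),th(\ul{w}),>0}$, and (after the step above) a surjection of the latter onto $\CZ_{v,\ul{w},>0}$; to conclude that forgetting the interleaved coordinates is a homeomorphism onto $\CZ_{v,\ul{w},>0}$ (injectivity and bicontinuity included, noting that the MR-coordinates $a_l$ need not agree with the fibre coordinates of $\pi$) the paper first invokes \cite[Lemma~5.1]{BH22} to place the stratum inside $\dot r_1U^-U^*_{-\alpha_{\infty_1}}\dot r_2 U^-\cdots\dot r_nU^-/\tilde{B}^+$, so that the section \eqref{eq:th} restricts to a $\tilde{T}$-equivariant isomorphism with $\CZ_{v,\ul{w},>0}\times\BR^{n-1}_{>0}$, and then cancels the $\BR^{n-1}_{>0}$ factor by running the argument of \cite[Proposition~4.2]{BH22} $n-1$ times. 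You correctly flag this as the main obstacle but supply no mechanism for it, so the parametrization in (3)---and with it the cell structure implicitly used in (1)---is not established by the proposal.
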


\begin{proof}
By Proposition~\ref{thm:closure}, we have  
\begin{align*}
 &\overline{U^-_{\ge 0} y_{\infty_1 }(\BR_{> 0})  U^-_{\ge 0} y_{\infty_2 }(\BR_{> 0})  U^-_{\ge 0} \dots y_{\infty_{n-1}}(\BR_{> 0})  U^-_{\ge 0}\tilde B^+/\tilde B^+} \\
 = &\bigcup_{\ul{w} \in W^n} \tilde{\CB}_{th(\ul{w}),\ge 0} = \!\!\!\!\!\bigsqcup_{\stackrel{\tilde{v} \le \tilde{w} \in \tilde{W}, }{\tilde{w} \le th({\ul{w}}) \text{ for some } \ul{w} \in W^n }}  \!\!\!\!\! \tilde{\CB}_{\tilde{v},\tilde{w},> 0}.
\end{align*}
 Therefore we have 
\begin{equation}\label{eq:a}
\overline{U^-_{\ge 0} y_{\infty_1 }(\BR_{> 0})  U^-_{\ge 0}   \dots y_{\infty_{n-1}}(\BR_{> 0})  U^-_{\ge 0}\tilde B^+/\tilde B^+} \cap \CZ' = \bigsqcup_{v \le m_\ast(\ul{w})}  \tilde{\CB}_{i(v), th(\ul{w}),> 0}.
\end{equation}

For any subset $S \subset \{1, \dots, l\}$, let $\theta_S: \tilde{G} \rightarrow \tilde{G}$ be the unique group automorphism of $\tilde{G}$ that is the identity in $G$, $\tilde{T}$, and maps $x_{\infty_l}(a)$ to $x_{\infty_l}((-1)^{\delta_{l \in S}}a)$ and $y_{\infty_l}(a)$ to $y_{\infty_l}((-1)^{\delta_{l \in S}}a)$ for all $1 \le l \le n-1$. Here $\delta_{l \in S} = 1$ if $ l \in S$ and $\delta_{l \in S} = 0$ if $ l \not \in S$.

The automorphism $\theta_S$ induces an automorphism in $\CZ'$, again denoted by $\theta_S$. Applying the automorphism $\theta_S$ to \eqref{eq:a}, we obtain 
\begin{align*}
&\overline{\pi^{-1}(U^-_{\ge 0}, U^-_{\ge 0}, \dots, U^-_{\ge 0}B^+/B^+)} \cap \CZ'  
 \\
 = &\,\,\overline{U^-_{\ge 0} y_{\infty_1 }(\BR^\ast)  U^-_{\ge 0}   \dots y_{\infty_{n-1}}(\BR^\ast)  U^-_{\ge 0}\tilde B^+/\tilde B^+} \cap \CZ' \\  
 =  &\bigsqcup_{v \le m_\ast(\ul{w})}  \Big( \bigsqcup_{S \subset \{1, \dots, l\}}   \theta_S \Big(\tilde{\CB}_{i(v), th(\ul{w}),> 0}\Big) \Big).
 \end{align*}
 
 The projection $\pi: \CZ' \rightarrow \CZ$ is invariant with respect to the automorphism $\theta_S$. We obtain that 
 \[
 	 \bigsqcup_{v \le v', \ul{w'} \le \ul{w} \text{ with } v' \le m_{\ast}(\ul{w'})} \CZ_{v',  \ul{w'}, >0} \subset \overline{\CZ_{v,  \ul{w}, >0}}.
 \]
The reverse inclusion follows by Proposition~\ref{thm:closure}. Part (2) is proved. 

Since each $ \theta_S  \Big(\tilde{\CB}_{i(v), th(\ul{w}),> 0}\Big)$ is a  connected component  in $\tilde{\CB}_{i(v), th(\ul{w})} (\BR)$, we see that  $\CZ_{v,  \ul{w}, >0}$ is a connected component of $\mathring{\CZ}_{v, \ul{w}}(\BR)$. This proves part (1).

We prove (3). Since $i (\ul{\mathbf v})$ is a positive subexpression of $th(\ul{\mathbf w})$, we have the isomorphism by \S\ref{Rp-BH} (a)
\[
	 G_{i (\ul{\mathbf v}), th(\ul{\mathbf w}), >0} \xrightarrow{\sim}   \mathring{\tilde \CB}_{i(v),th(\ul{w}), >0}.
\]
It follows from the definition  that 
\[
G_{i (\ul{\mathbf v}), th(\ul{\mathbf w}), >0}  \cong G_{(\mathbf v_1)_+, \mathbf w_1, >0} \times y_{\infty_1 }(\mathbb{R}_{>0}) \times  \cdots  \times y_{\infty_{n-1} }(\mathbb{R}_{>0})  \times G_{(\mathbf v_n)_+, \mathbf w_n, >0}
\]
 
By \cite[Lemma~5.1]{BH22}, we see that 
 \[
 \tilde{\CB}_{i(v), th(\ul{w}),> 0} \subset \dot{r}_1 U^- U^*_{-\alpha_{\infty_1}} \dot{r}_2 U^-  \cdots  \dot{r}_n U^- /  \tilde{B}^+, \text{for } \ul{r} \in W^n \text{ with } \ul{r}  \le \ul{w},  v \le m_\bullet(\ul{r}).
 \]
 The section in \eqref{eq:th} restricts to an isomorphism 
 \[
 G_{(\mathbf v_1)_+, \mathbf w_1, >0} \times y_{\infty_1 }(\mathbb{R}_{>0}) \times  \cdots  \times y_{\infty_{n-1} }(\mathbb{R}_{>0})  \times G_{(\mathbf v_n)_+, \mathbf w_n, >0} \rightarrow \CZ_{v,\ul{w}, >0} \times \BR^{n-1}_{>0}.
 \]
 Note that we have natural $\tilde{T}$-action on both sides and the isomorphism is $\tilde{T}$-equivariant. Applying the argument in \cite[Proposition~4.2]{BH22} $n-1$ times, we obtain the desired isomorphism in (3). 
\end{proof}

\subsection{Stratified isomorphisms}
 

By \cite[Lemma~5.1 \& Theorem~5.3]{BH22}, we have the following commutative diagram for any $u \in \tilde{W}$
\begin{equation}\label{eq:prod}
\begin{tikzcd}[column sep=small]
\tilde{\CB}_{\ge0} \cap \dot{u} \tilde{U}^- \tilde{B}^+  \ar[d, "\cong"] \ar[r, equal]&  \bigsqcup_{v \le u \le w} \tilde{\CB}_{v,w, >0}  \ar[d, "\cong"]   \ar[r, hookleftarrow]&   \tilde{\CB}_{v,w, >0} \ar[d, "\cong"] \\
\mathring{\tilde{\CB}}_{u, \ge0} \times  \mathring{\tilde{\CB}}^{u}_{\ge0} \ar[r, equal] &  \bigsqcup_{v \le u } \tilde{\CB}_{v,u, >0} \times  \bigsqcup_{ u \le w} \tilde{\CB}_{u, w >0}  \ar[r, hookleftarrow]&\tilde{\CB}_{v,u, >0} \times    \tilde{\CB}_{u, w >0}.
\end{tikzcd}
\end{equation}

Here $\mathring{\tilde{\CB}}_{u, \ge0} = \mathring{\tilde{\CB}}_{u} \cap \tilde{\CB}_{\ge 0}$ and $\mathring{\tilde{\CB}}^u_{\ge0} = \mathring{\tilde{\CB}}^{u} \cap \tilde{\CB}_{\ge 0}$. We have the following similar result for the twisted product.  

\begin{prop}\label{prop:TM}
Let $(u, \ul{x}) \in Q_n$ be a minimal element. We have the following commutative diagram
\[ 
\begin{tikzcd}
\displaystyle \!\!\!\!\!\!\!\!  \!\!\!\!\!\!\!\!  \bigsqcup_{ v' \le u, \ul{x} \le \ul{w'} \text{ with } v' \le m_{\ast}(\ul{w'})}  \!\!\!\!\!\!\!\!  \!\!\!\!\!\!\!\! \mathring{\CZ}_{v',\ul{w'}, >0}  \ar[r, "\cong"] \ar[d, hookleftarrow]&\displaystyle \bigsqcup_{v' \le u}  \tilde{\CB}_{i(v'), i(u),>0}  \times  \bigsqcup_{\ul{x} \le \ul{w'}} \tilde{\CB}_{ th(\ul{x}),th(\ul{w'}), >0} 
   \ar[d, hookleftarrow]\\
  \mathring{\CZ}_{v',\ul{w'}, >0}  \ar[r, "\cong"] &   \tilde{\CB}_{i(v'), i(u),>0}  \times   \tilde{\CB}_{ th(\ul{x}),th(\ul{w'}), >0}.
\end{tikzcd}
\]
 
In particular, $\CZ_{v,\ul{w}, \ge 0} $ is a topological manifold with boundary 
\[
 \partial\CZ_{v,\ul{w}, \ge 0} =\bigsqcup_{(v', \ul{w'}) <(v, \ul{w} ) \text{ in } Q_n} \mathring{\CZ}_{v',\ul{w'}, >0} . 
\]

\end{prop}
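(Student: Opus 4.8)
The plan is to upgrade the $\BC^*$-fibration and stratified isomorphism of Proposition~\ref{prop:Z} to the totally nonnegative setting, combining it with the product decomposition \eqref{eq:prod} for the thickened flag variety. First I would note that since $(u,\ul x)\in Q$ is minimal, by \S\ref{subsec:posi}(b) the sequence $\ul u$ with $m_\bullet(\ul u)=u$ positive in $\ul x$ satisfies $m_\ast(\ul u)=u$; moreover minimality forces $u$ to be the Demazure product $m_\ast(\ul x)$, so that $i(u)=m_\bullet(i(\ul u))$ and the pair $(i(u), th(\ul x))$ sits inside the convex set $\mathrm{Im}(f)\subset\tilde Q$ with $i(u)\le th(\ul x)$. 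The key geometric input is that $th(\ul x)$ plays the role of the intermediate Weyl group element $u$ in diagram \eqref{eq:prod}: applying \eqref{eq:prod} with $u$ there replaced by $i(u)$ (or rather using that $i(u)\le th(\ul x)$ and decomposing at $th(\ul x)$) splits $\tilde\CB_{\ge0}\cap\dot u\tilde U^-\tilde B^+$-type neighborhoods into a product of a ``Schubert'' factor indexed by $i(v')\le i(u)$ and an ``opposite Schubert'' factor indexed by $th(\ul x)\le th(\ul w')$.

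The main steps, in order, would be: (i) identify the totally nonnegative locus $\CZ'_{\ge0}:=\overline{\pi^{-1}(U^-_{\ge0},\dots,U^-_{\ge0}B^+/B^+)}\cap\CZ'$, which by \eqref{eq:a} and the $\theta_S$-argument in the proof of Theorem~\ref{thm:Zp} is a disjoint union over $S\subset\{1,\dots,n-1\}$ of the cells $\theta_S(\tilde\CB_{i(v),th(\ul w),>0})$; (ii) restrict attention to the piece lying over a fixed minimal $(u,\ul x)$, i.e. the union $\bigsqcup_{v'\le u,\ \ul x\le\ul w',\ v'\le m_\ast(\ul w')}\theta_S(\tilde\CB_{i(v'),th(\ul w'),>0})$, and observe that on each $\theta_S$-component the map $\pi$ (which is $\theta_S$-invariant by Proposition~\ref{prop:Z}) is still a trivial $(\BR_{>0})^{n-1}$-fibration onto $\bigsqcup\mathring\CZ_{v',\ul w',>0}$, using the section \eqref{eq:th} and the $\tilde T$-equivariance argument of \cite[Proposition~4.2]{BH22} exactly as at the end of the proof of Theorem~\ref{thm:Zp}(3); (iii) apply the top row of \eqref{eq:prod} with intermediate element $th(\ul x)$ to rewrite $\tilde\CB_{\ge0}\cap\dot u'\tilde U^-\tilde B^+$ (for suitable $u'$ lifting the stratum) as the product $\mathring{\tilde\CB}_{i(u),\ge0}\times\mathring{\tilde\CB}^{th(\ul x)}_{\ge0}$ restricted to the relevant range, giving the right-hand column; (iv) check the square commutes and that the vertical inclusions are the evident ones, and that the horizontal maps restrict compatibly to a single stratum $\mathring\CZ_{v',\ul w',>0}$, yielding the bottom row. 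Throughout one uses that by \S\ref{subsec:posi} the order relations match up: $v'\le u$ in $W$ iff $i(v')\le i(u)$ in $\tilde W$, and $\ul x\le\ul w'$ iff $th(\ul x)\le th(\ul w')$.

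The main obstacle I expect is bookkeeping the $\theta_S$-components: the fibration $\pi$ is only a trivial $(\BC^*)^{n-1}$-fibration after taking $\BR$-points, and one must verify that the totally nonnegative part $\CZ'_{\ge0}$ meets each fiber in a union of translates that maps \emph{bijectively and homeomorphically} onto $\CZ_{v',\ul w',>0}\times(\BR_{>0})^{n-1}$ after choosing the correct sign pattern $S$ (effectively $S=\emptyset$, so that the $y_{\infty_l}$-coordinates are genuinely positive). This is where one invokes, $n-1$ times, the $\tilde T$-equivariant splitting of \cite[Proposition~4.2]{BH22} to peel off one $\BR_{>0}$-factor at a time, reducing the $n$-fold twisted product to the $1$-fold case handled by \eqref{eq:prod}. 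Once this identification is in place, commutativity of the diagram is formal, since all four maps are built from the same convolution isomorphisms and the section \eqref{eq:th}, and the inclusions $\hookleftarrow$ on both sides are simply the inclusions of one stratum into the closure, which correspond under $f$ by Corollary~\ref{cor:irre}(2) and Theorem~\ref{thm:Zp}(2).
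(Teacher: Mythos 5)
Your overall route is the paper's: use the section \eqref{eq:th}, the product decomposition \eqref{eq:prod} from \cite[Lemma~5.1 \& Theorem~5.3]{BH22}, the fact that minimality of $(u,\ul{x})$ makes the relevant middle cell a copy of $\BR_{>0}^{n-1}$, and the $\tilde{T}$-equivariant argument of \cite[Proposition~4.2]{BH22} applied $n-1$ times to peel off the fibre directions. The one place where your bookkeeping as written would fail is step (iii): a single application of \eqref{eq:prod} at the intermediate element $th(\ul{x})$ gives $\mathring{\tilde{\CB}}_{th(\ul{x}),\ge 0}\times\mathring{\tilde{\CB}}^{th(\ul{x})}_{\ge 0}$, i.e.\ a Schubert-side factor $\tilde{\CB}_{i(v'),th(\ul{x}),>0}$ indexed by $i(v')\le th(\ul{x})$, which is $(n-1)$-dimensionally too big and is \emph{not} the factor $\bigsqcup_{v'\le u}\tilde{\CB}_{i(v'),i(u),>0}$ appearing in the right-hand column (and decomposing only at $i(u)$ gives the wrong opposite-side factor). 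One cannot get a Schubert factor at $i(u)$ and an opposite factor at $th(\ul{x})$ from one application, since $i(u)\neq th(\ul{x})$ for $n\ge 2$. The paper applies \eqref{eq:prod} twice --- first at $i(u)$, then at $th(\ul{x})$ inside $\mathring{\tilde{\CB}}^{i(u)}_{\ge 0}\cap\dot{\tilde{x}}\tilde{U}^-\tilde{B}^+$ where $\tilde{x}=th(\ul{x})$ --- producing a middle factor $\tilde{\CB}_{i(u),th(\ul{x}),>0}\cong\BR_{>0}^{n-1}$, and it is exactly this factor that gets cancelled against the $(\BR_{>0})^{n-1}$ coming from the section \eqref{eq:th} via the $\tilde{T}$-equivariant splitting; your ``peel off $n-1$ factors'' remark supplies the right mechanism, but it must be aimed at this middle cell rather than folded into the claim that a single use of \eqref{eq:prod} already yields the displayed product. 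Finally, the $\theta_S$ sign analysis in your step (i) is unnecessary here: the proof only needs the honest section with positive $y_{\infty_l}$-coordinates together with the identification $\tilde{\CB}_{\ge 0}\cap\dot{\tilde{x}}\tilde{U}^-\tilde{B}^+\cap\CZ'=\bigsqcup_{v'\le u,\,\ul{x}\le\ul{w'}}\tilde{\CB}_{i(v'),th(\ul{w'}),>0}$, not a description of the full preimage $\pi^{-1}$ of the nonnegative part.
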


\begin{remark}
By  Theorem~\ref{thm:Zp},  the left hand side is open in $\CZ_{\ge 0}$. By Proposition~\ref{thm:closure}, the right-hand side is locally closed in $\tilde{\CB} \times \tilde{\CB}$.
\end{remark}
 
 \begin{proof}
 Let $\ul{x} = (x_1, \dots, x_n)$ and $\tilde{x} = th(\ul{x}) \in \tilde{W}$. Note that 
\begin{align*}
\tilde{\CB}_{\ge0} \cap \dot{\tilde{x} } \tilde{U}^- \tilde{B}^+ \cap \CZ' = \bigsqcup_{v \le m_\ast(\ul{x}),  \ul{x} \le \ul{w}} \tilde{\CB}_{i(v),th(\ul{w}), >0} \\
\subset \dot{x}_1 U^- U^*_{-\alpha_{\infty_1}} \dot{x}_2 U^- U^*_{-\alpha_{\infty_2}} \cdots \dot{x}_n U^- \tilde{B}^+/ \tilde{B}^+.
\end{align*}

By \eqref{eq:th}, we have a  section
 \begin{align*} 
	  \,\, \prod_{i=1}^n U_{-\a_{\infty_i }}^* (\mathbb{R}_{>0})    \times  \!\!\!\!    \bigsqcup_{ v' \le u, \ul{x} \le \ul{w}}  \!\!\!\!  \mathring{\CZ}_{v',\ul{w'}, >0}  \rightarrow   \bigsqcup_{v \le m_\ast(\ul{x}),  \ul{x} \le \ul{w}} \tilde{\CB}_{i(v),th(\ul{w}), >0}. 
 \end{align*}
 By \eqref{eq:prod}, we further have the embedding
 \[
\prod_{i=1}^n U_{-\a_{\infty_i }}^* (\mathbb{R}_{>0})   \times  \!\!\!\!    \bigsqcup_{ v' \le u, \ul{x} \le \ul{w}}  \!\!\!\!\mathring{\CZ}_{v',\ul{w'}, >0}  \rightarrow  \mathring{\tilde{\CB}}_{i(u), \ge0} \times  \mathring{\tilde{\CB}}^{i(u)}_{\ge0}.
\]

Applying  \eqref{eq:prod} again, we obtain that 
\begin{align}\label{eq:1}
&\displaystyle \prod_{i=1}^n U_{-\a_{\infty_i }}^* (\mathbb{R}_{>0})   \times \!\!\!\!    \bigsqcup_{ v' \le u, \ul{x} \le \ul{w}}  \!\!\!\! \mathring{\CZ}_{v',\ul{w'}, >0}  
\rightarrow 
 \mathring{\tilde{\CB}}_{i(u), \ge0} \times ( \mathring{\tilde{\CB}}^{i(u)}_{\ge0} \cap \dot{\tilde{x}} \tilde{U}^- \tilde{B}^+) \\
\notag &\rightarrow   \mathring{\tilde{\CB}}_{i(u), \ge0} \times  \tilde{\CB}_{i(u),th(\ul{x}), >0} \times   \mathring{\tilde{\CB}}^{th(\ul{x})}_{\ge0} \cong  \mathring{\tilde{\CB}}_{i(u), \ge0} \times  \BR_{> 0}^{n-1} \times   \mathring{\tilde{\CB}}^{th(\ul{x})}_{\ge0}.
\end{align} 
Note that $\tilde{\CB}_{i(u),th(\ul{x}), >0} \cong \BR^{n-1}_{>0}$ since $(u, \ul{x}) \in Q_n$ is minimal. The composition in \eqref{eq:1} is a  $\tilde{T}$-equivariant isomorphism.   The desired stratified isomorphism follows by applying the argument in \cite[Proposition~4.2]{BH22} repeatedly. 

It follows from \cite[\S5.1]{BH22} that $\CZ_{v,\ul{w}, \ge 0} $ is a topological manifold with boundary 
\[
 \partial\CZ_{v,\ul{w}, \ge 0} =\bigsqcup_{(v', \ul{w'}) <(v, \ul{w} ) \text{ in } Q_n} \mathring{\CZ}_{v',\ul{w'}, >0}. \qedhere
\]
\end{proof}

\subsection{The duality map}
In this subsection, we assume that $G$ is a connected reductive group. We still write $\CZ = \CZ_n$.
 
Let $\iota$ be the involutive group automorphism on $G$ which fixes $T$ and sends $x_i(a)$ to $x_i(-a)$, $y_i(a)$ to $y_i(-a)$ for $i \in I$ and $a \in \BC$. Then $\iota$ preserves the Borel subgroup $B^+$ and induces an involution on $\CB$. 

We define $$\Phi: \CB \to \CB, \qquad g B^+/ B^+ \mapsto \iota(\dot w^{-1}_0 g)  B^+/ B^+.$$

It is easy to see that $\Phi$ is an involution on $\CB$ and for any $v \le w$ in $W$, we have $\iota(\OCB_{v, w})=\OCB_{v, w}$ and $\Phi(\mathring{\CB}_{v, w})=\mathring{\CB}_{w_0 w, w_0 v}$. Since $\iota(\dot w^{-1}_0 U^-_{>0}) B^+/ B^+= U^+_{>0} \dot{w}_0 B^+/ B^+ = U^-_{>0} B^+/ B^+$ by \cite[Theorem~8.7]{Lus-1}, $\Phi$ restricts to an isomorphism on $\CB_{\ge 0}$.

 We define the involutions 
\begin{gather*} \Phi: \CB^n \to \CB^n, \quad (h_1  B^+/ B^+, \dots, h_n  B^+/ B^+ ) \mapsto (\Phi(h_n  B^+/ B^+), \dots,  \Phi(h_1  B^+/ B^+)), \\
\Phi: \CZ  \to \CZ , \quad (g_1, g_2, \dots, g_n) \mapsto (\iota(\dot w^{-1}_0 g_1 g_2\cdots g_n), \iota(g_n \i), \dots, \iota(g_2 \i)).
\end{gather*}

 Recall the isomorphism $\alpha: \CZ \rightarrow \CB^n$. Then we have the commutative diagram 
\[
\xymatrix{
\CZ  \ar[r]^-{\Phi} \ar[d]^-{\a} & \CZ  \ar[d]^-{\a} \\
\CX \ar[r]^-{\Phi} & \CX.
}
\]

\begin{prop}\label{prop:duality}
    We have $\Phi(\CZ_{  \ge 0})=\CZ_{  \ge 0}$. 
\end{prop}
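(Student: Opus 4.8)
The plan is to reduce the statement to the already-established involution $\Phi$ on the totally nonnegative flag variety $\CB_{\ge 0}$. Recall that $\CZ_{\ge 0}$ is by definition the Hausdorff closure of the subset $V:=(U^-_{\ge 0}, U^-_{\ge 0}, \ldots, U^-_{\ge 0}B^+/B^+)$ in $\CZ$. Since $\Phi$ is an involution on $\CZ$ and is continuous (being induced by algebraic morphisms built from $\iota$, left translation by $\dot w_0^{-1}$, and the inversions $g_i \mapsto g_i^{-1}$ on the Bruhat cells, which descend to the twisted product), it suffices to show $\Phi(V) = V$; then $\Phi(\CZ_{\ge 0}) = \Phi(\overline V) = \overline{\Phi(V)} = \overline V = \CZ_{\ge 0}$.

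To verify $\Phi(V) = V$, I would compute $\Phi$ on a point $(u_1, u_2, \dots, u_n B^+/B^+)$ with each $u_i \in U^-_{\ge 0}$. By the formula, $\Phi$ sends this to $(\iota(\dot w_0^{-1} u_1 u_2 \cdots u_n),\ \iota(u_n^{-1}),\ \iota(u_{n-1}^{-1}),\ \dots,\ \iota(u_2^{-1}))$ in $\CZ$. Since $u_i \in U^-_{\ge 0}$ and $U^-_{\ge 0}$ is a submonoid stable under $\iota$-twisted inversion — more precisely $\iota(u^{-1}) \in U^-_{\ge 0}$ for $u \in U^-_{\ge 0}$, because $\iota(y_i(a)^{-1}) = \iota(y_i(-a)) = y_i(a)$, so $\iota$ composed with inversion fixes each generator $y_i(a)$ and reverses order, landing back in the monoid — all entries of $\Phi$ of this point except possibly the first lie in $U^-_{\ge 0}$. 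For the first entry, I would use that $\dot w_0^{-1} U^-_{\ge 0} B^+/B^+ \subset U^+_{\ge 0}\dot w_0 B^+/B^+$ relationships, but more directly: it is cleaner to observe that a general point of $V$ can be replaced (without changing the twisted-product point) by a representative where the first factor is chosen so that the whole product $u_1\cdots u_n$ lies in $U^-_{\ge 0}$, and then invoke $\iota(\dot w_0^{-1} U^-_{\ge 0})B^+/B^+ = U^+_{\ge 0}\dot w_0 B^+/B^+ = U^-_{\ge 0}B^+/B^+$ from \cite[Theorem~8.7]{Lus-1}, exactly as in the definition of $\Phi$ on $\CB_{\ge 0}$. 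Since the equivalence class in $\CZ = G\times^{B^+}\cdots\times^{B^+} G/B^+$ only depends on the factors up to the $B^+$-action, I would carefully track how rewriting $u_1\cdots u_n = b\cdot(\text{something in } U^-_{\ge 0})$ with $b \in B^+$ propagates, and check that the resulting tuple is again $B^+$-equivalent to one lying entirely in $(U^-_{\ge 0},\ldots,U^-_{\ge 0}B^+/B^+)$; this is formally identical to the $n=1$ case combined with the monoid closure property above applied factor by factor.

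An alternative and perhaps cleaner route uses the already-proven case of the double flag variety together with the isomorphism $\alpha$ and the commutative square with $\Phi$ on $\CX$; but for general $n$ there is no such square to a known case, so I would instead argue directly as above, or bootstrap via the thickening map of Proposition~\ref{prop:Z}. Using the thickening approach, one would need a compatible duality $\tilde\Phi$ on $\tilde\CB$ — but $\tilde G$ is not reductive, so Lusztig's duality does not apply to $\tilde\CB$, which is exactly why the duality statement is restricted to reductive $G$. Hence the direct computation on $V$ is the right strategy.

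The main obstacle I anticipate is the bookkeeping with the $B^+$-equivalence in the twisted product: the formula for $\Phi$ on $\CZ$ mixes the factors (it reverses their order and multiplies the first entry by $\dot w_0^{-1}$ after collapsing the product), so showing that the image tuple is $B^+$-equivalent to a tuple of the standard form $(U^-_{\ge 0},\ldots, U^-_{\ge 0}B^+/B^+)$ requires carefully choosing representatives and chasing the $B^+$-factors through the definition $g\cdot b \sim g,\ b\cdot h \sim h$ at each of the $n-1$ junctions. Once one fixes the convention and writes $u_1u_2\cdots u_n = \tilde b\, \tilde u$ with $\tilde u \in U^-_{\ge 0}$ and $\tilde b\in B^+_{\ge 0}$ using $G_{\ge 0} = U^-_{\ge 0}T_{>0}U^+_{\ge 0}$ and absorbing $\tilde b$ appropriately, the rest follows from $\iota(\dot w_0^{-1}\tilde u)B^+/B^+ \in U^-_{\ge 0}B^+/B^+$ and the $\iota$-inversion stability of $U^-_{\ge 0}$. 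I would present the argument as: (i) continuity of $\Phi$; (ii) $\Phi^2 = \mathrm{id}$; (iii) $\Phi(V) \subseteq \CZ_{\ge 0}$ via the rewriting above; (iv) conclude $\Phi(\CZ_{\ge 0}) \subseteq \CZ_{\ge 0}$ by taking closures, and equality since $\Phi$ is an involution.
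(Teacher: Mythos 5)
Your high-level plan is the same as the paper's: verify the statement on the generating subset, take Hausdorff closures, use that $\Phi$ is an involution, handle all entries but the first via $\iota$-inversion stability of the negative unipotent monoid, and handle the first entry via a Lusztig \cite[Theorem~8.7]{Lus-1}-type statement together with a chase of $B^+$-factors through the $n-1$ junctions. However, there is a genuine gap in the execution: you work throughout with $U^-_{\ge 0}$, and the key factorization statements you invoke are false at the $\ge 0$ level. First, $\Phi(V)=V$ is false: $\Phi$ permutes the strata nontrivially (it sends $\CZ_{v,\ul{w},>0}$ to $\CZ_{w_0w_1,(w_0v,w_n^{-1},\dots,w_2^{-1}),>0}$); for instance $\Phi(e,\dots,eB^+/B^+)$ has $m$-image $\dot w_0B^+/B^+$, which lies in $\mathring{\CB}^{w_0}$ and hence not in $U^-_{\ge 0}B^+/B^+\subset\mathring{\CB}^{e}$. (Your final outline only needs $\Phi(V)\subseteq\CZ_{\ge 0}$, so this by itself is a repairable overstatement.) Second, and more seriously, the identity $U^+_{\ge 0}\dot w_0B^+/B^+=U^-_{\ge 0}B^+/B^+$ that you quote is false -- the same point $\dot w_0B^+/B^+$ is a counterexample -- and for a general $u\in U^-_{\ge 0}$ the element $\dot w_0^{-1}u$ need not even lie in the big cell $U^-B^+$ (take $u=e$), so the factorization you need to start absorbing $B^+$-factors does not exist. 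Thus the verification as written already fails at boundary points of $V$ such as $(e,\dots,eB^+/B^+)$.

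The missing idea, which is exactly how the paper proceeds, is to first replace $U^-_{\ge 0}$ by its Hausdorff-dense subset $U^-_{>0}$, so that $\CZ_{\ge 0}$ is also the closure of $(U^-_{>0},\dots,U^-_{>0})$, and then run the whole computation with strictly positive elements: by \cite[Lemma~5.6]{GKL} one has $\dot w_0^{-1}g_1=g_1'b_1$ with $g_1'\in U^-$ and $b_1\in B^+_{>0}$, and the strict positivity of $b_1$ is what lets one propagate $b_ig_{i+1}=g_{i+1}'b_{i+1}$ with $g_{i+1}'\in U^-_{>0}$, $b_{i+1}\in B^+_{>0}$ through all the junctions (this positivity of the $b_i$ is also not addressed in your ``bookkeeping'' step, but it is essential: absorbing an arbitrary $b\in B^+$ into the next factor destroys nonnegativity). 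After this rewriting every entry except the first is visibly in $U^-_{>0}$, and the first entry $\iota(g_1'\cdots g_n')$ lies in $U^-\cap U^+_{>0}\dot w_0B^+=U^-\cap U^-_{>0}B^+=U^-_{>0}$ by the strictly positive form of Lusztig's theorem. With this density reduction and the strict-positivity bookkeeping supplied, your argument becomes the paper's proof; without them, the central step does not go through.
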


\begin{proof}

Note that $U^-_{>0}$ is Hausdorff dense in $U^-_{\ge 0}$. Thus, $\CZ_{\ge 0}$ is the Hausdorff closure of $(U^-_{>0}, \dots,  U^-_{>0})$ in $\CZ$. By definition, $\iota(U^-_{>0}) \i=U^-_{>0}$. 

Let $g_1, g_2, \dots, g_n \in U^-_{>0}$. By \cite[Lemma~5.6]{GKL}, we have $\dot w^{-1}_0 g_1=g'_1 b_1 \text{ for some } g'_1 \in U^-, b_1 \in B^+_{>0}$. We write $b_1 g_2 = g_2' b_2$ for some $g'_2 \in U^-_{>0}$, $b_2 \in B^+_{>0}$. We similarly obtain $g'_i \in U^-_{>0}$, $b_i \in B^+_{>0}$ for all $2 \le i \le n$.

Then $\iota( g_1'  \cdots g'_n) B^+ = \iota(\dot w^{-1}_0 g_1\cdots g_n) B^+ = \iota(\dot w^{-1}_0 g_1g_2\dot w_0 ) \dot w_0 B^+ \subset U^+_{>0} \dot{w}_0 B^+ $. Hence $\iota( g_1'  \cdots g'_n) B^+ \in U^-_{>0} B^+$ by \cite[Lemma~5.6]{GKL}. It follows that $\iota( g_1'  \cdots g'_n) \in U^-_{>0}$ since $\iota( g_1'  \cdots g'_n) \in U^-$. 
 Therefore we have 
\begin{align*}
 &\, (\iota(\dot w^{-1}_0 g_1 g_2\cdots g_n), \iota(g_n \i), \dots, \iota(g_2 \i)) \\
 = & \, (\iota(g'_1 \cdots,  g'_n), \iota(g'_n)\i, \dots, \iota(g'_2)\i   ) \in (U^-_{>0}, U_{>0}^-, \dots, U^-_{>0}) \subset \CZ_{\ge 0}.
\end{align*}
Hence $\Phi(\CZ_{\ge 0}) \subset (\CZ_{\ge 0})$. Since $\Phi$ is an isomorphism, we must have $\Phi(\CZ_{\ge 0})= (\CZ_{\ge 0})$. This finishes the proof.
\end{proof}

For any $v \in W$ and $\ul{w} = (w_1, w_2, \dots, w_n) \in W^n$ with $v \le m_\ast(\ul{w})$, it follows by direct computation and the above proposition that 
\begin{equation*}
\Phi(\CZ_{v, (w_1, w_2,  \dots, w_n), > 0})= {\CZ}_{w_0 w_1,(w_0 v, w_n\i, \dots, w_2 \i), >0}.
\end{equation*}


\section{Regularity theorem}

\subsection{Some properties on  posets}\label{subsec:posets}
 Let $(P, \le)$ be a poset. For any $x, y \in P$ with $x \le y$, the interval between $x$ and $y$ is defined as $[x, y]=\{z \in P \mid x \le z \le y\}$. A subset $C \subset P$ is called {\it convex} if for any $x, y \in C$, we have $[x,y] \subset C$.

The covering relation is denoted by $\gtrdot$. For any $x, y \in P$ with $x \le y$, a maximal chain from $x$ to $y$ is a finite sequence of elements $y=w_0 \gtrdot w_1 \gtrdot \cdots \gtrdot w_n=x$. In general, the maximal chain may not exist. 

The $P$ is called {\it pure} if for any $x, y \in P$ with $x \le y$, the maximal chains from $x$ to $y$ always exist and have the same length. A pure poset $P$ is called {\it thin} if every interval of length $2$ has exactly $4$ elements, i.e., it has exactly two elements between $x$ and $y$. 

The order complex $\Delta_{ord}(P)$ of a finite poset $P$ is a simplicial complex whose vertices are the elements of $P$ and whose simplices are the chains $x_0 < x_1 < \cdots <x_k$ in $P$.  A finite pure poset is called {\it shellable} if its order complex $\Delta_{ord}(P)$  is shellable, that is, its maximal faces can be ordered as $F_1, F_2, \dots, F_n$ so that $F_k \cap \big( \cup_{i=1}^{k-1} F_i\big)$ is a nonempty union of the maximal proper faces of $F_k$ for $k = 2, \dots, n$. 


\subsection{Regular CW complexes}\label{subsec:CW}
Let $X$ be a Hausdorff space. We call a finite disjoint union $X = \bigsqcup_{\alpha \in P}X_{\alpha}$ a {\it regular CW complex} if it satisfies the following two properties.

\begin{enumerate}
\item For each $\alpha \in P$, there exists a homeomorphism from a closed ball to $\overline{X}_\alpha$ mapping the interior of the ball to $X_\alpha$. 
\item For each $\alpha$, there exists $P' \subset P$, such that $\overline{X}_\alpha = \bigsqcup_{\beta \in P'} X_\beta$.
\end{enumerate}
We then have a natural poset $({P}, \le)$, where $\beta \le \alpha$ if and only if $X_\beta \subset \overline{X}_\alpha$. The face poset of $X$ is defined to be the augmented poset $\hat{P} = P \sqcup \{\hat{0}\}$, where $\hat{0}$ is the added unique minimal element of $\hat{P}$.

One of the motivations for the combinatorial properties in \S\ref{subsec:posets} comes from the following result in \cite[Proposition~2.2]{Bj2}. 

(a) {\it Let $\hat{0} \neq x \in \hat{P} $. If the closed interval $[\hat{0},x]$ is pure, thin, and shellable, then it is the face poset of some regular CW complex which is homeomorphic to a closed ball.}


We also recall the following result from \cite{Bj2} that will be used in Proposition~\ref{thm:CW}.

(b) {\it 
Suppose that $X$ is a regular CW complex with (finite) face poset $\hat{P}$. If $\hat{P}\sqcup \{ \hat{1}\}$ (adjoining  a maximum $\hat{1}$) is graded, thin, and shellable, then $X$ is homeomorphic to a sphere of dimension rank$(P)-1$.}

\subsection{Index set of the stratification of $\CZ_n$}

Let 
$$
Q_n =\{(v, \ul{w}) \in W \times W^n; v \le m_{\ast}(\ul{w})\}.
$$ The partial order on $Q_n$ is defined by $(v', \ul{w'}) \le (v, \ul{w})$ if $v \le v'$ and $\ul{w'} \le \ul{w}$. The augmented poset $\hat{Q}_n = Q_n \sqcup \{\hat{0}\}$ is the face poset of $\CZ$.

\begin{prop}
\label{thm:hatQ}
    The augmented poset $\hat{Q}_n$ is pure and thin. 
\end{prop}

\begin{proof}
The purity is clear. We write $\hat{Q} = \hat{Q}_n$ and $Q = Q_n$ in the proof. We define  
$$
\tilde Q=\{(x, y) \in \tilde W^{op} \times \tilde W; x \le y\}.
$$
 The partial order on $\tilde Q$ is defined by $(x', y') \le (x, y) $ if $x \le x' \le y' \le y$. Define the map 
$$
f: Q  \to \tilde Q, \qquad (v, \ul{w}) \mapsto (i(v), th(\ul{w})).
$$ 

The map $f$ identifies $Q $ with a sub-poset of $\tilde Q$.
Note that the image of $f$ is $$\text{Im}(f)=\{(x, y) \in Q; x \in W, y \in W s_{\infty_1 } W  s_{\infty_2} W \cdots s_{\infty_{n-1}} W\}.$$ It is easy to see that $\text{Im}(f)$ is a convex subset of $\tilde Q$. 

Next, we apply \cite{Dyer} to the Coxeter group $\tilde{W} \times \tilde{W}$ with respect to the reflection order $\le_A$, where $A$ is the set of reflections in $\tilde{W} \times \{e\}$.  It follows from the definition in \cite[\S1]{Dyer} that $(x',y') \le_A (x,y)$ if and only if $x\le x'$ and $y'\le y$. Therefore, $\tilde{Q}$ is a convex subset of $(\tilde{W} \times \tilde{W}, \le_A)$. By \cite[Proposition~2.5]{Dyer}, $\tilde Q$ is thin. It remains to consider the intervals of length $2$ with the minimal element $\hat{0}$. Suppose that the maximal element in this interval is $(v, \ul{w})$. Then $\ell(\ul{w})-\ell(v)=1$. 

Let $\ul{w}=(w_1, w_2, \dots, w_n)$ and $\mathbf w_i$ be a reduced expression of $w_i$ for each $i$. We write $(\mathbf w_1,   \mathbf w_2, \dots, \mathbf w_n)$ as $(s_1, \ldots, s_m)$. Then there exists $1 \le l \le m$ such that $v=s_1 \cdots \hat s_l \cdots s_m$. We show that 

(a) {\it $\sharp\{l; v=s_1 \cdots \hat s_l \cdots s_m\}=1$ or $2$. }

Suppose that there exists $l_1<l_2$ such that $v=s_1 \cdots \hat s_{l_1} \cdots s_m=s_1 \cdots \hat s_{l_2} \cdots s_m$.  Then we have $s_{l_1} \cdots s_{l_2-1}=s_{l_1+1} \cdots s_{l_2}$. Thus $$s_1 \cdots s_m=s_1 \cdots \hat s_{l_1} \cdots \hat s_{l_2} \cdots s_m<v.$$ Also we have
\begin{align*} v=m_{\ast}(\ul{w}) &=s_1 \ast \cdots \ast s_{l_1} \ast \cdots \ast s_{l_2} \ast \cdots \ast s_m \\ &=s_1 \ast \cdots \ast \hat s_{l_1} \ast \cdots \ast s_{l_2} \ast \cdots \ast s_m.\end{align*}

Suppose that there exists $l_1<l_2<l_3$ such that $v=s_1 \cdots \hat s_{l_j} \cdots s_m$ for $j=1, 2, 3$. Then we have 
\begin{align*} v=m_{\ast}(\ul{w}) &=s_1 \ast \cdots \ast s_{l_1} \ast \cdots \ast s_{l_2} \ast \cdots \ast s_{l_3} \ast \cdots \ast s_m \\ &=s_1 \ast \cdots \ast \hat s_{l_1} \ast \cdots \ast s_{l_2} \ast \cdots \ast s_{l_3} \ast \cdots \ast s_m \\ 
&=s_1 \ast \cdots \ast \hat s_{l_1} \ast \cdots \ast \hat s_{l_2} \ast \cdots \ast s_{l_3} \ast \cdots \ast s_m.
\end{align*}

In particular, $\ell(v) \le m-2$. That is a contradiction. (a) is proved. We now analyze the two cases separately.

Case (1): $\{l; v=s_1 \cdots \hat s_l \cdots s_m\}=\{l_1\}$. 

The expression $(s_1, \ldots, \hat s_{l_1}, \ldots, s_m)$ defines an element $\ul{w_1} \in W^n $ with $\ul{w_1}<\ul{w}$. It is also easy to see that $s_1 \ast \cdots \ast s_m>v=m_\ast(\ul{w_1})=s_1 \ast \cdots \ast \hat s_{l_1} \ast \cdots \ast s_m$. In this case, there are exactly two elements between $\hat{0}$ and $(v, \ul{w})$. They are $(v, \ul{w_1})$ and $(m_{\ast}(\ul{w}), \ul{w})$. 
 
Case (2): $\{l; v=s_1 \cdots \hat s_l \cdots s_m\}=\{l_1, l_2\}$. 

The expressions $(s_1, \ldots, \hat s_{l_1}, \ldots, s_m)$ and $(s_1, \ldots, \hat s_{l_2}, \ldots, s_m)$ define two elements $\ul{w_1}$, $\ul{w_2} \in W^n$ with $\ul{w_1}, \ul{w_2}<\ul{w}$. In this case, there are exactly two elements between $\hat 0$ and $(v, \ul{w})$. They are $(v, \ul{w_1})$ and $(v, \ul{w_2})$. 
\end{proof}

\subsection{Regularity theorem}\label{subsec:regular}

 

 Recall that an $n$-dimension topological manifold with boundary is a Hausdorff space $X$ such that every point $x \in X$ has an open neighborhood homeomorphic to either $\BR^n$, or $\BR_{\ge 0} \times \BR^{n-1}$ mapping $x$ to a point in $\{0\} \times  \BR^{n-1}$. In the latter case, we say that $x$ is on $\partial X$, the boundary of $X$.

The following statement can be derived from the generalized Poincar\'e conjecture and Brown's collar theorem. We refer to \cite[\S 3.2]{GKL} for details.

\begin{thm}\label{thm:poincare}  Let $X$ be a compact $n$-dimensional topological manifold with boundary, such that $\partial X$ is homeomorphic to an $(n-1)$-dimensional sphere and $X - \partial X$ is homeomorphic to an $n$-dimensional open ball. Then $X$ is homeomorphic to an $n$-dimensional closed ball. 
\end{thm}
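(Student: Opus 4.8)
The plan is to cap off the boundary of $X$, recognize the resulting closed manifold as $S^n$ via the generalized Poincar\'e conjecture, and then strip the cap off again by means of the bicollared Schoenflies theorem; this is essentially the route of \cite[\S 3.2]{GKL}, so I will only indicate the main steps. The cases $n\le 2$ are classical --- for $n\le 1$, $X$ is a point or a closed interval, and for $n=2$ one appeals to the classification of surfaces --- so from now on I assume $n\ge 2$.

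First I would form the closed manifold $\hat X=X\cup_{\partial X}C(\partial X)$, where $C(\partial X)$ is the cone on $\partial X$. Since $\partial X\cong S^{n-1}$, the cone $C(\partial X)$ is homeomorphic to the closed ball $D^n$, with a distinguished cone point $p$ and $\partial C(\partial X)=\partial X$. By Brown's collar theorem $\partial X$ is collared in $X$ (and, trivially, in $C(\partial X)$), so the gluing is locally Euclidean along the seam; hence $\hat X$ is a closed topological $n$-manifold, and the seam sphere $\Sigma=\partial X$ sits bicollared inside it.

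Next I would check that $\hat X$ is a homotopy $n$-sphere. The collar exhibits the inclusion $X\setminus\partial X\hookrightarrow X$ as a homotopy equivalence, and $X\setminus\partial X\cong\mathbb R^n$ by hypothesis, so $X$ is contractible; $C(\partial X)$ is contractible too, and the two pieces meet, up to homotopy, along $\Sigma\simeq S^{n-1}$. A Mayer--Vietoris computation then gives $\widetilde H_k(\hat X)\cong\widetilde H_{k-1}(S^{n-1})$, so $\hat X$ has the integral homology of $S^n$, while van Kampen gives $\pi_1(\hat X)=1$ because both pieces are simply connected. By Hurewicz and Whitehead, $\hat X$ is a homotopy $n$-sphere, and the generalized Poincar\'e conjecture (Perelman for $n=3$, Freedman for $n=4$, and the classical high-dimensional case for $n\ge 5$) produces a homeomorphism $\Phi$ from $\hat X$ onto $S^n$.

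Finally I would recover $X$ itself. Under $\Phi$ the cap $C(\partial X)\cong D^n$ maps to a closed ball $B\subset S^n$ whose boundary $\Phi(\Sigma)$ is bicollared, and $\Phi$ restricts to a homeomorphism of $X=\overline{\hat X\setminus C(\partial X)}$ onto $\overline{S^n\setminus B}$; by the generalized (bicollared) Schoenflies theorem of Brown, the latter is again a closed $n$-ball, so $X\cong D^n$. I do not anticipate a genuine obstacle: the substantive ingredients --- the generalized Poincar\'e conjecture and Brown's collar and Schoenflies theorems --- are imported wholesale. The only points that need real care are purely formal, namely that each gluing is again a manifold and that the cap is flatly (bicollaredly) embedded so that Schoenflies applies, both of which are precisely what the collar theorem guarantees.
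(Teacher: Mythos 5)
Your argument is correct and is essentially the same derivation the paper intends: the paper simply cites \cite[\S 3.2]{GKL}, where the statement is obtained exactly this way --- cap the boundary sphere with a ball/cone, use collars to see the result is a closed manifold and a homotopy sphere, invoke the generalized Poincar\'e conjecture, and then recover $X$ via Brown's bicollared Schoenflies theorem. The only cosmetic slip is the case split (``$n\le 2$ classical, so assume $n\ge 2$'' should read $n\ge 3$, or simply note the general argument already covers $n=2$), which does not affect correctness.
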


We have the following regularity result.
\begin{prop}\label{thm:CW}
Let $v \in W$ and $\ul{w} \in W^n$ with $v \le m_{\ast}(\ul{w})$.  Suppose that the interval $[\hat{0}, (v,\ul{w})]$ in $Q_n$ is shellable. Then $\CZ_{n,v,\ul{w}, \ge 0}$ is a regular CW complex homeomorphic to a closed ball.
\end{prop}
\begin{proof}
We write $\CZ= \CZ_n$.
We proceed by induction on $\ell(\ul{w}) - \ell(v)$. The statement is clear when $\ell(\ul{w}) - \ell(v) =0$, since $\CZ_{v,\ul{w}, \ge 0}$ is a single point in this case.

Assume $\partial\CZ_{v,\ul{w}, \ge 0} $ is a regular CW complex with face poset 
\[
	 Q'_n = \{\alpha \in \hat{Q}_n \vert  \alpha <(v, \ul{w} ) \}.
\]
Adding a maximal element $\hat{1}$, we have $ Q_n' \sqcup \{\hat{1} \} \cong  [\hat{0}, (v, \ul{w} ) ] \subset \hat{Q}_n$ as posets. By \S\ref{subsec:CW} (b) and Proposition~\ref{thm:hatQ}, $\partial\CZ_{v,\ul{w}, \ge 0}$ is a regular CW complex homeomorphic to a sphere of dimension $\ell(\ul{w}) - \ell({v})$.  Now the result follows from Theorem~\ref{thm:poincare}.
\end{proof}

\subsection{EL-shellability of $\hat Q_2$}\label{subsec:ELQ2}
Let $n=2$ and $Q_2 = \{(v, \ul{w}) \in W \times W^2; v \le m_{\ast}(\ul{w})\}$ in this section. 
This subsection is devoted to proving the following EL-shellability of $\hat Q_2$. 

\begin{thm}\label{thm:conj}
For any $x \in \hat{Q}_2$, the interval $[\hat{0},x]$ is EL-shellable.
\end{thm}
This subsection is organized as follows: we first recall EL-shellability in \S\ref{subsec:ELpre}; we then show in \S\ref{subsec:LW} that the augmented poset $\tilde{Q}_2^\wedge$ (defined below) is EL-shellable following \cite{LW}  with some flexibility on the labelling;	 we then consider in \S\ref{subsec:T} a more specific labeling and prove some of its basic properties;  finally, in \S\ref{subsec:EL} we label the edges of $\hat{Q}_2$ as \eqref{eq:EL} and prove EL-shellability.

 \subsubsection{}\label{subsec:ELpre}
 
Suppose that $P$ is a pure poset. An edge labeling of $P$ is a map $\l$ from the set of all covering relations in $P$ to a poset $\L$. The labeling $\l$ sends any maximal chain $(y \gtrdot y_1  \gtrdot \cdots  \gtrdot y_n=x)$ of an interval $[x,y]$ of $P$ to a tuple $(\lambda(y, y_1),\lambda(y_1 , y_2), \dots, \lambda(y_{n-1},  y_n) )$ of $\L$. A maximal chain is called {\it increasing} if the associated tuple of $\L$ is increasing. The edge labeling $\l$ also allows one to order the maximal chains of any interval of $P$ by ordering the corresponding tuples lexicographically. 

An edge labeling of $P$ is called an EL-labeling {\it}\footnote{The notion of EL-labeling used in this paper is consistent with \cite{LW} and is dual to the original label introduced in \cite{Bj}.} (edge lexicographical labeling) if for every closed interval there exists a unique increasing maximal chain, and all other maximal chains of this interval are less than this maximal chain (with respect to the lexicographical order). If a pure poset admits an EL-labeling, it is automatically shellable by \cite[Theorem~3.3]{BW}. We say that such a poset is {\em EL-shellable}.
 
The following proposition will be used later to check the EL-labeling later.  

\begin{prop}\cite[Proposition~2.5]{Bj}\label{prop:EL}
Let $\lambda$ be an edge labeling on a graded poset $P$ such that every interval $[x,y]$ contains a unique increasing maximal chain, denoted by $c(x,y) = (y \gtrdot y_1  \gtrdot \cdots  \gtrdot y_n =x)$.

Then such labeling is an EL-labeling if and only if for any such $c(x,y)$ and $y \gtrdot y' \ge x$, we have $\lambda(y, y_1) <  \lambda(y, y')$.
\end{prop}

\subsubsection{}\label{subsec:LW}

 Let $\tilde{Q}_2 =\{(x, y) \in \tilde{W}^{op} \times \tilde{W}; x \le y\}$. We define the augmented poset $\tilde{Q}_2^\wedge=\tilde{Q}_2  \sqcup \{\tilde{0}^\wedge\}$, where $\tilde{0}^\wedge$ is the added minimal element. 
The EL-shellability of the poset $\tilde{Q}_2^\wedge$ is due to Williams \cite{LW}. We sketched her construction here, as her ideas will also be used in our situation. 

The edges of $\tilde{Q}_2^\wedge$ are of the following three types:
\begin{itemize}
    \item Type I: $(x, y) \gtrdot (x, y')$, where $y \gtrdot y'$ in $\tilde{W}$;
    \item Type II: $(x, y) \gtrdot (x', y)$, where $x \gtrdot x'$ in $\tilde{W}^{op}$;
    \item Type III: $(x, x) \gtrdot \tilde{0}^\wedge$. 
\end{itemize}

By \cite{Dyer}, both $\tilde{W}^{op}$ and $\tilde{W}$ are EL-shellable. In other words, there exists EL-labelings $\l_1$ (resp. $\l_2$) from the covering relations in $\tilde{W}$ (resp. $\tilde{W}^{op}$) to a poset $\L_1$ (resp. $\L_2$).

Let $\L=\L_1 \sqcup \{\emptyset\} \sqcup \L_2$ be the poset with partial order $\le$, where the restriction of $\le$ to $\L_i$ coincide with the given partial order on $\L_i$ and $a<\emptyset<b$ for any $a \in \L_1$ and $b \in \L_2$. 
The edge labelling on $\tilde{Q}_2^\wedge$, denoted by $\tilde{\lambda}$, is defined as follows: 
\begin{itemize}
\item the type I edge $(x, y) \gtrdot (x, y')$ is labelled by $\l_1(y \gtrdot y')$; 
\item the type II edge $(x, y) \gtrdot (x', y)$ is labelled by $\l_2(x \gtrdot x')$;
\item  the type III edges are labelled by $\emptyset$. 
\end{itemize}

It is easy to see that the restriction of this labeling to the poset $\tilde{Q}_2$ is an EL-labeling. It remains to consider the interval $[\tilde{0}^\wedge, (x, y)]$ for any $(x, y) \in \tilde{Q}_2$.

Note that $\emptyset<b$, for any $ b\in\L_2$. For any $(x, y) \in \tilde{Q}_2$, an increasing maximal chain in $[\tilde{0}^\wedge, (x, y)]$ must be of the form  $(x,y) \gtrdot (x,y_1)\gtrdot \cdots \gtrdot (x,x) \gtrdot \tilde{0}^\wedge$, where $y \gtrdot y_1 \gtrdot \cdots \gtrdot y_n=x$ is a maximal chain in $W$ with increasing labelings. The existence and uniqueness of such a chain $y \gtrdot y_1 \gtrdot \cdots \gtrdot y_n=x$ is due to the EL-labeling of $\tilde{W}$. In particular, there is a unique increasing maximal chain in $[\tilde{0}^\wedge, (x, y)]$. 

Recall $a < b $ for any $a \in \L_1$ and $b \in \L_2$. We have $\tilde{\lambda}\Big((x, y_i) \gtrdot (x, y_{i+1})\Big)<\tilde{\lambda}\Big((x, y_i) \gtrdot (x', y_i)\Big)$ for any $x \gtrdot x'$ in $W^{op}$. Moreover, by the EL-labeling of $W$, $\tilde{\lambda}\Big((x, y_i) \gtrdot (x, y_{i+1})\Big)<\tilde{\lambda}\Big((x, y_i) \gtrdot (x, y')\Big)$ for any $y_i \gtrdot y'$ in $W$ with $y' \neq y_{i+1}$. Now by \cite[Proposition~2.5]{Bj}, the labeling $\tilde{\lambda}$ on $\tilde{Q}_2^\wedge$ is an EL-labeling. 

For any $\alpha \le \beta $ in $\tilde{Q}_2^\wedge$, we denote by $c(\beta, \alpha)$ the unique increasing maximal chain from $\alpha$ to $\beta$.

\subsubsection{}\label{subsec:T}
We consider a specific EL-labeling of $\tilde{W}$. 

Let $ \L_1=\tilde{T}$ be the set of reflections in $\tilde{W}$. By \cite[(2.3)]{Dyer1}, there exists an reflection order on $\tilde{T}$ such that 
\[
	t' \prec t, \quad \text{for any } t \in \tilde{T} - W_I \text{ and }  t' \in \tilde{T}\cap W_I.
\]

 We define the edge labeling $\lambda_1$ of $\tilde{W}$ by $\lambda_1 (y \gtrdot y') = y y'^{-1}$. It follows from \cite[Proposition~3.9]{Dyer} that $\lambda_1$  defines an EL-labeling of $\tilde{W}$.

 We show that

	 (a) {\it for $y \gtrdot y_1$ and $y \gtrdot y_2$ with $y_1 \in th(W)$ and $y_2 \not \in th(W)$, we have $yy_{1}^{-1} \prec  yy_{2}^{-1}$ in $\tilde{T}$.}
	 
	 There are two cases to consider. We write $s_\infty = s_{\infty_1} \in \tilde{W}$.
	 
	 Case I: $y = as_{l}b s_{\infty} c$, $y_1 =  ab s_{\infty} c$, $y_2 =  as_{l} b c$ for some $a,b,c \in W_I$. 
 We write $t_1 = y y_1^{-1} = as_la^{-1}\in \tilde{T}$, $t_2 = yy_2^{-1} = as_{l}b s_{\infty} (as_{l}b)^{-1} \in \tilde{T}$. It follows by \cite[\S3]{Dyer3}, the subgroup of $\tilde{W}$ generated by $t_1, t_2$ is a dihedral group, with Coxeter generators $t_1, abs_{\infty}(ab)^{-1}$. Since $t_1\in W_I$, we have $t_1 \prec abs_{\infty} (ab)^{-1} $. By definition of reflection orders \cite[\S2.1]{Dyer1}, we have  $t_1 \prec t_2 = t_1 abs_{\infty} (ab)^{-1} t_1^{-1}$.
 

Case II: $y = as_{\infty} b s_{l} c$, $y_1 =  as_{\infty} b  c$, $y_2 =  ab s_{l}  c$ for some $a,b,c \in W_I$.
 We write $t_1 = y y_1^{-1} = as_{\infty} b s_l (as_{\infty} b)^{-1}$, $t_2 = yy_2^{-1} = as_{\infty} a^{-1} \in \tilde{T}$. It follows by \cite[\S3]{Dyer3}, the subgroup of $\tilde{W}$ generated by $t_1, t_2$ is a dihedral group, with Coxeter generators $abs_l (ab)^{-1}, t_2$. Since $abs_l (ab)^{-1}  \prec t_2$, we have $t_2 abs_l (ab)^{-1} t_2^{-1}=t_1 \prec t_2$ by definition of reflection orders \cite[\S2.1]{Dyer1}.
 
 
Now (a) is proved.
 
\subsubsection{}\label{subsec:EL}
Now we construct an EL-labeling on the poset $\hat{Q}_2$.  Define the map  
\[
f: \hat{Q}_2  \longrightarrow \tilde{Q}_2^\wedge, \quad (v, \ul{w}) \mapsto (i(v), th(\ul{w})),\quad  \hat{0} \mapsto \tilde{0}^\wedge.
\]

The map $f$ identifies $\hat{Q}_2 $ with a sub-poset of $\tilde{Q}_2^\wedge$.  However, for $(v, \ul{w}) \gtrdot \hat{0} $ in $\hat{Q}_2$, we do not have $(i(v), th(\ul{w})) \gtrdot \tilde{0}^\wedge $ in $\tilde{Q}_2^\wedge$. So results in \cite{LW} cannot apply directly.  

We consider the labeling on $\tilde{Q}_2^\wedge$ in \S\ref{subsec:LW}, where the edge labeling on $\tilde{W}$ is the specific one defined as in \S\ref{subsec:T}. There is no additional restriction on the EL-labeling on $\tilde{W}^{op}$. We now define an edge labelling $\lambda: \hat Q_2  \rightarrow \Lambda$ by 
\begin{gather}\label{eq:EL}
\lambda\Big((v, \ul{w}) \gtrdot (v', \ul{w}') \Big) = \tilde{\lambda} \Big((i(v), th(\ul{w})) \gtrdot (i(v'), th(\ul{w}'))  \Big), \\
\notag \lambda \Big((v, \ul{w}) \gtrdot \hat{0}\Big) = 
\text{the first edge label in the chain }c \Big((i(v), th(\ul{w})), \tilde{0}^\wedge \Big) \subset \tilde{Q}_2^\wedge \text{ via }\tilde{\lambda}.
\end{gather}

We claim this to be an EL-labeling of $\hat{Q}_2$. Thanks to the embedding of posets $Q_2 \rightarrow \tilde{Q}_2$ and \S\ref{subsec:LW}, the interval $[(v', \ul{w}'), (v, \ul{w})]$ contains a unique increasing maximal chain, which is lexicographically maximal.
It remains to consider the maximal chains in $[\hat{0}, (v,\ul{w})] \subset \hat{Q}_2$. 

	(a) {\it There exists a unique increasing maximal chain in $[\hat{0}, (v,\ul{w})]$.}
	
	We first show that the intersection of the maximal chain $c\Big((i(v), th(\ul{w})), \tilde{0}^\wedge \Big) \subset \tilde{Q}_2^\wedge$, constructed in \S\ref{subsec:LW}, with $f(\hat{Q}_2)$ is actually a maximal chain in $f(\hat{Q}_2)$. 
	
	Recall \S\ref{subsec:LW} that 
	\[
	c\Big((i(v), th(\ul{w})), \tilde{0}^\wedge \Big) = \Big((i(v), th(\ul{w}))   \gtrdot (i(v), \tilde{w}_1) \gtrdot  \cdots  \gtrdot  (i(v), i(v))\gtrdot \tilde{0}^\wedge\Big).
	\]
	 It follows from \S\ref{subsec:T} (a) and Proposition~\ref{prop:EL} that  $c\Big((i(v), th(\ul{w})), (i(v), i(v)) \Big) \cap f({Q_2})$ contains a minimal element in $f(Q_2)$, denoted by $ (i(v), th(\ul{w_n}))$.  
	 Then by the convexity of $f({Q_2})$ in $\tilde{Q}_2$, we have 
	 \begin{align*}
	 &c\Big((i(v), th(\ul{w})), (i(v), i(v)) \Big) \\
	 = & (i(v), th(\ul{w})) \gtrdot  (i(v), th(\ul{w_1})) \gtrdot \cdots \gtrdot  (i(v), th(\ul{w_n})) \gtrdot \cdots \gtrdot  (i(v), i(v)).
	 \end{align*}
	Then it is clear that $(v, \ul{w}) \gtrdot  (v, \ul{w}_1) \gtrdot \cdots \gtrdot  (v, \ul{w}_n) \gtrdot   \hat{0}$ is maximal in $[\hat{0}, (v,\ul{w})]$.

	It remains to show uniqueness. By construction, any increasing maximal chain in $[\hat{0}, (v,\ul{w})]$ can be extended to an increasing maximal chain in $[\tilde{0}^\wedge, (i(v), th(\ul{w}))] \subset \tilde{Q}_2^\wedge$ under the map $f$. The uniqueness follows from the corresponding uniqueness statement in $[\tilde{0}^\wedge, (i(v), th(\ul{w}))]$. 
	 
	 (a) is proved.

	By construction and \S\ref{subsec:LW}, for any $(v, \ul{w})  \gtrdot \alpha \ge \hat{0}$ with $\alpha \neq (v_1, \ul{w}_1)  $ in $\hat{Q}_2$, we have $\lambda \Big((v, \ul{w})  \gtrdot(v_1, \ul{w}_1)   \Big) < \lambda \Big((v, \ul{w})  \gtrdot \alpha   \Big)$.  By Proposition~\ref{prop:EL}, the labeling we constructed is an EL-labeling.

 
\section{Applications}

\subsection{Braid varieties}\label{subsec:braid}
Let $\ul{s} = (s_{i_1}, \dots, s_{i_n}) \in W^n$ be such that all $s_{i_j}$ are simple reflections. Let $w = m_\ast(\ul{s})$. We define the open braid variety associated to $\ul{s}$ as 
 \[
  \{ 
 (g_1, g_2, \dots, g_n)  \in B^+ \dot s_{i_1} B^+ \times^{B^+}  \cdots \times^{B^+} B^+ \dot s_{i_n} B^+/B^+\vert 
 g_1g_2\cdots g_n /B^+ = \dot{w} B^+/ B^+
 \}.
 \]
 Note that $B^+ \dot s_{i_1} B^+  \cdots   B^+ \dot s_{i_n} B^+/B^+ \subset \overline{B^+ \dot{w} B^+/B^+}$ and  $\overline{B^+ \dot{w} B^+/B^+} \cap B^- \dot{w} B^+/B^+ = \dot{w} B^+/B^+$. So the open braid variety is exactly $\mathring{\CZ}_{n,w, \ul{s}}$. Hence the closure is ${\CZ}_{n,w, \ul{s}}$.

We have the following regularity result for the braid varieties. 

\begin{thm}\label{thm:braid}
Let $\ul{s} = (s_{i_1}, \dots, s_{i_n})$. The totally nonnegative braid variety $\CZ_{n,m_{\ast}(\ul{s}), \ul{s}, \ge 0}$ is a regular CW complex homeomorphic to a closed ball.
\end{thm}

\begin{remark}
This theorem provides a geometric realization of the subword complex \cite[Corollary~3.8]{KM}.
	
On the other hand, the cluster algebra structure on the coordinate ring of the open braid variety $\mathring{\CZ}_{w, \ul{s}}$ has recently been established in \cite{CGG, GLBS} for a reductive group $G$. It is interesting to compare the total positivity structure with the cluster algebra structure.
\end{remark}

\begin{proof}
Let $x =  (m_{\ast}(\ul{s}), \ul{s}) \in Q_n$. The poset $[\hat{0},x]$ is the dual face poset of the subword complex considered in \cite[\S2]{KM}. Since a poset is shellable if and only if the dual poset is shellable, $[\hat{0},x]$ is shellable by \cite[Theorem~2.5]{KM}. Now the theorem follows from Proposition~\ref{thm:CW}. 
\end{proof}
  
\subsection{Double flag varieties}\label{subsec:Lusztig} Let $\CZ = \CZ_2 = G \times^{B^+} G/B^+$ in this subsection. 

\subsubsection{} We have the natural $G \times G$-action on $\CB \times \CB$. By \S \ref{sec:flag}, we have the decomposition of $\CB \times \CB$ into the $B^+ \times B^-$-orbits 
\begin{equation}\label{eq:BB}
\CB \times \CB=\sqcup_{v, w \in W} \mathring{\CB}_v \times \mathring{\CB}^w.
\end{equation} 
 
Let $G_\D=\{(g, g); g \in G\} \subset G \times G$. Following \cite{WY}, for $(u, v, w) \in W^3$, we consider the intersection of the $G_\D$-orbit  and the $B^+ \times B^-$-orbit $$\mathring{\CX}^u_{v, w}=(G_{\D} \cdot (B^+/B^+, \dot u B^+/B^+)) \cap  (\mathring{\CB}_v \times \mathring{\CB}^w).
$$
We have a decomposition of $ \CB \times \CB$ as $\sqcup_{u, v, w \in W} \mathring{\CX}^u_{v, w}$. We denote this stratified space by $\CX$ to emphasize that it is different from the stratified space in \eqref{eq:BB}.

The isomorphism $G \times G \to G \times G$ defined by $(g_1, g_2) \mapsto (g_1, g_1 g_2)$ induces an isomorphism of stratified spaces
   \begin{equation}\label{eq;alpha}
   \a:  \CZ  \to \CX, \qquad (g_1, g_2B^+ /B^+) \mapsto (g_1 B^+ /B^+, g_1 g_2 B^+ /B^+),
  \end{equation} 
mapping $\mathring{\CZ}_{v,\ul{w} }$ isomorphically to $\mathring{\CX}_{w_1, v}^{w_{2}}$. We define $\CX_{\ge 0} = \alpha(\CZ_{\ge 0})$ and $\CX^{w_2  }_{w_1, v  , >0} = \CX^{w_2  }_{w_1, v} \cap \CX_{\ge 0} =  \alpha(\CZ_{v, (w_1, w_2) ,>0})$. Combining Proposition~\ref{thm:CW} with Theorem~\ref{thm:conj}, we obtain the following regularity result for the double flag varieties. 

\begin{thm}\label{thm:double}
Let $v \in W$ and $(w_1, w_2) \in W^2$ with $v \le w_1 \ast w_2$. Then the closure of $\CX^{w_2  }_{w_1, v, >0}$ is a regular CW complex homeomorphic to a closed ball. 
\end{thm}

\subsubsection{}Since $\mathring{\CX}^u_{v, w}$ is irreducible by Corollary~\ref{cor:irre} and \eqref{eq;alpha}, there is a  unique $B^- \times B^+$-orbit $\mathring{\CB}^{v'} \times \mathring{\CB}_{w'}$  such that $(\mathring{\CB}^{v'} \times \mathring{\CB}_{w'})\cap \mathring{\CX}^u_{v, w}$ is dense in $\mathring{\CX}^u_{v, w}$. By \cite[\S1]{He-min}, for any $x,y \in W$, the subset $\{yu \in W\vert  u \le x\}$ has a unique minimal element. We denote the unique minimal element by $y \circ_r x$. 
 
\begin{prop}\label{lem:generic}
Let $v' = w \circ_r u^{-1} $ and $w'=  v \ast u$. Then 

 (1) $\CX^{u  }_{v,  w  , >0} \subset \mathring{\CX}^u_{v, w}\cap  (\mathring{\CB}^{v'} \times \mathring{\CB}_{w'}) $;
 
 (2) $\mathring{\CX}^u_{v, w}\cap  (\mathring{\CB}^{v'} \times \mathring{\CB}_{w'}) $ is dense in $\mathring{\CX}^u_{v, w}$.
\end{prop}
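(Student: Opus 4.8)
The plan is to deduce both statements from the thickening picture and the explicit parametrization in Theorem~\ref{thm:Zp}(3). First I would translate the problem through the isomorphism $\alpha: \CZ_2 \to \CX$ of \eqref{eq;alpha}: since $\alpha$ sends $\mathring{\CZ}_{v,(w_1,w_2)}$ to $\mathring{\CX}^{w_2}_{w_1,v}$, the claim about $\CX^{u}_{v,w,>0}$ with parameters $(v,w,u)$ corresponds to a claim about $\CZ_{v',(w_1,w_2),>0}$ with $w_1 = v$, $w_2 = u$, and the ``$v$'' of the twisted product being the $m_\bullet$-value determined by a positive subexpression. So it suffices to compute, for a point in $\CZ_{v,\ul{w},>0}$ written as $(g_1,g_2 B^+/B^+)$ with $g_1 \in G_{(\mathbf v_1)_+,\mathbf w_1,>0}$ and $g_2 \in G_{(\mathbf v_2)_+,\mathbf w_2,>0}$, which $B^-$-cell contains $g_1 B^+$ (the first coordinate of $\alpha$) and which $B^+$-cell contains $g_1 g_2 B^+$ (the second coordinate). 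The first is immediate: $g_1 \in B^- \dot{v_1'} B^+$ where $v_1'$ is the subword value, and on the totally positive stratum the generic such value is $v_1' = v_1 = w_1$... but more precisely, by \S\ref{Rp-BH}(a) the point $g_1 B^+$ lies in $\CB_{v_1,w_1,>0}$, whose Hausdorff closure meets $\mathring{\CB}^{v'}$ for all $v' \ge v_1$; the \emph{generic} opposite cell is the one indexed by $v_1$ itself. Dualizing the roles of $B^+$ and $B^-$ (using that $g_2 \in U^-_{\ge 0}$-type elements sweep out a Schubert-type cell), the second coordinate $g_1 g_2 B^+$ generically lies in $\mathring{\CB}_{m_\ast(\ul{w})} = \mathring{\CB}_{w_1 \ast w_2}$, matching $w' = v \ast u$ after translation.

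The key computational step is identifying $v' = w \circ_r u^{-1}$: this comes from the ``opposite Schubert'' side. Recall $\mathring{\CX}^u_{v,w}$ sits inside the diagonal orbit through $(B^+/B^+, \dot u B^+/B^+)$, so a generic point is $(g B^+/B^+, g\dot u B^+/B^+)$ with $g B^+ \in \mathring{\CB}_v$ and $g\dot u B^+ \in \mathring{\CB}^w$, i.e. $g \in B^+ \dot v B^+$ and $g \dot u \in B^- \dot w B^+$. Writing $g = b_1 \dot v b_1'$ and asking when $g\dot u \in B^- \dot w B^+$ forces $\dot v \dot u \in B^+ \backslash (B^- \dot w B^+)/B^+$ at the level of generic $b_1, b_1'$; the condition $g B^+ \in \mathring{\CB}^{v'}$ similarly reads $g \in B^- \dot{v'} B^+$, and the generic (smallest, since opposite cells are ordered the reverse way) $v'$ compatible with $g\dot u \in B^- \dot w B^+$ is exactly $\min\{ y u^{-1}\, ;\, y \le w\}$... wait, one must be careful with left versus right: the correct identity is $v' = \min\{ w u'^{-1}\,;\, u' \le u\} = w \circ_r u^{-1}$ by the definition of $\circ_r$ recalled just before the proposition. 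I would verify this by reducing to the rank-one / minimal length case along a reduced word for $u$, using the two Demazure-product rules in \S\ref{subsec:posets}: multiplying on the right by $\dot s_i$ either increases length (then $v'$ picks up $s_i$) or decreases it (then $v'$ is unchanged and the extra freedom in $b_1'$ absorbs it), which is precisely the recursion computing $w \circ_r u^{-1}$.

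For part (2), density, I would use irreducibility of $\mathring{\CX}^u_{v,w}$ (Corollary~\ref{cor:irre} via \eqref{eq;alpha}) together with the fact that the intersection $\mathring{\CX}^u_{v,w} \cap (\mathring{\CB}^{v'} \times \mathring{\CB}_{w'})$ is open in $\mathring{\CX}^u_{v,w}$: it is the preimage of an open condition under the two projections composed with the defining maps. An open nonempty subset of an irreducible variety is dense, so it remains to show nonemptiness — and that follows from part (1) since the totally positive stratum $\CX^u_{v,w,>0}$ is nonempty (it is a topological cell by Theorem~\ref{thm:Zp}) and is contained in that intersection. So (1) $\Rightarrow$ (2) almost for free, and the real content is (1).

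The main obstacle I expect is the bookkeeping in the identity $v' = w \circ_r u^{-1}$: getting the left/right conventions straight between the diagonal-orbit description of $\mathring{\CX}^u_{v,w}$, the definition of $\circ_r$ as a right-multiplication minimum, and the direction of the Bruhat order on opposite Schubert cells. The cleanest route is probably to avoid direct matrix manipulation entirely and instead argue on the twisted-product side via Theorem~\ref{thm:Zp}(3): the generic point of $\CZ_{v,\ul{w},>0}$ is $(g_1, g_2 B^+)$ with the two $G$-factors in the explicit positive form, and then $g_1 B^+ \in \mathring{\CB}^{w_1}$ directly (first factor generic), while $g_1 g_2 B^+ \in \mathring{\CB}_{w_1 \ast w_2}$ (Demazure product of the Schubert-cell indices). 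Translating the pair $(w_1\text{-index}, (w_1\!\ast\! w_2)\text{-index})$ back through $\alpha$ into $\CX$-coordinates then yields the stated $(v', w') = (w \circ_r u^{-1}, v \ast u)$ after matching $w_1 = v$, $w_2 = u$, and the subexpression identity $m_\bullet(\ul v) = v$ from \S\ref{subsec:posi}(b).
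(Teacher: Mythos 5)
Your part (1) follows essentially the paper's route (transport to $\CZ_2$ via $\alpha$ and read off the cells from the parametrization in Theorem~\ref{thm:Zp}(3)), but the bookkeeping is off in a way that matters: for a point $(g_1,g_2B^+)$ of the positive stratum with $g_1\in G_{(\mathbf v_1)_+,\mathbf w_1,>0}$, the first coordinate $g_1B^+$ lies in $\CB_{v_1,w_1,>0}\subset\mathring{\CB}^{v_1}$, where $v_1$ is the first component of the \emph{positive decomposition} of $w$ inside $(v,u)$ --- not in $\mathring{\CB}^{w_1}=\mathring{\CB}^{v}$ as in your closing paragraph. The key identity one must supply is $v_1=w\circ_r u^{-1}$ (the paper extracts it from \S\ref{subsec:posi}(b)); your middle-paragraph recursion about the generic point of the diagonal orbit is aimed at a different statement, and your repeated ``generic'' hedging is misplaced for (1), which asserts a containment of the \emph{entire} stratum $\CX^u_{v,w,>0}$ --- fortunately the parametrization covers every point, so no genericity is needed there.

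The genuine gap is in (2). You assert that $\mathring{\CX}^u_{v,w}\cap(\mathring{\CB}^{v'}\times\mathring{\CB}_{w'})$ is open in $\mathring{\CX}^u_{v,w}$ because it is ``the preimage of an open condition'', but $\mathring{\CB}^{v'}$ and $\mathring{\CB}_{w'}$ are only locally closed in $\CB$ (open in $\CB^{v'}=\sqcup_{a\ge v'}\mathring{\CB}^a$, resp.\ $\CB_{w'}=\sqcup_{b\le w'}\mathring{\CB}_b$). Openness of the intersection inside the stratum therefore requires knowing $\mathring{\CX}^u_{v,w}\subset\CB^{v'}\times\CB_{w'}$, i.e.\ that every orbit $\mathring{\CB}^a\times\mathring{\CB}_b$ meeting $\mathring{\CX}^u_{v,w}$ satisfies $v'\le a$ and $b\le w'$; in general the intersection of an irreducible variety with one cell of a stratification is not open (think of a point of $\BP^1$). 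Those two inequalities are exactly the nontrivial content of the paper's proof of (2): nonemptiness of $G_\Delta\cdot(B^+/B^+,\dot uB^+/B^+)\cap(\mathring{\CB}^a\times\mathring{\CB}^w)$ forces $(B^-\dot wB^+/B^+)\cap(\dot aB^+\dot uB^+/B^+)\neq\emptyset$, hence $w\le a\ast u$, equivalently $w\circ_r u^{-1}\le a$ (as in \cite[Lemma 4.3]{HL}), and symmetrically $b\le v\ast u$; only then does irreducibility plus the nonemptiness from (1) give density. So ``(1) $\Rightarrow$ (2) almost for free'' skips the main step. Your sketched induction along a reduced word of $u$ could in principle be developed into an alternative proof of (2) by locating the orbit of a generic point, but as written it is heuristic (you yourself flag the unresolved left/right issue) and does not close the gap.
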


\begin{remark}
Part (1) arises from a question that Lusztig asked about the totally positive big cells in the double flag for reductive groups. 
\end{remark}

\begin{proof}
Let $\ul{w}= (w_1, w_2)$ be such that $w =w _1 w_2$ and $\ul{w}$ is a positive subexpression in $(v,u)$. Let ${\mathbf v}$ (resp. ${\mathbf u}$) be reduced expressions of $v$  (resp. $ u$). Let $({\mathbf w_1})_+$ (resp. $({\mathbf w_2})_+$) be the positive subexpression of $w_1$ (resp. $w_2$) in ${\mathbf v}$ (resp. ${\mathbf u}$). By \S\ref{subsec:posi} (b), we have $v' =w_1$. 
By Theorem~\ref{thm:Zp}, we have the isomorphism $$
    G_{(\mathbf w_1)_+, \mathbf v, >0} \times  G_{(\mathbf w_2)_+, \mathbf u, >0} \to \CX^{u  }_{v,  w  , >0}, \quad (g_1, g_2) \mapsto (g_1B^+/B^+, g_1g_2 B^+/B^+).
    $$
This proves (1). 
 
We now prove (2). Let $a, b \in W$ be such that $\mathring{\CX}^u_{v, w}\cap  (\mathring{\CB}^{a} \times \mathring{\CB}_{b})  \neq \emptyset$. Due to (1), it suffices to show $v' \le a$ and $b \le w'$. 

We first have 
\[
G_{\Delta} \cdot (B^+/B^+, \dot{u} B^+/B^+) \cap (\mathring{\CB}^{a} \times \mathring{\CB}^{w}) \neq \emptyset.
\]
Hence $( B^- \dot{w} B^+/B^+) \cap( \dot{a} B^+\dot{u} B^+/B^+)  \neq \emptyset$. Note that $
\dot{a} B^+\dot{u} B^+/B^+ \subset \sqcup_{x \le a \ast u} B^+ x B^+/B^+$. Hence $w \le a \ast u$,  or equivalently $w \circ_r u^{-1} \le a$ by a proof similar to \cite[Lemma 4.3]{HL}. 

Similarly, we have 
\[
G_{\Delta} \cdot (B^+/B^+, \dot{u} B^+/B^+) \cap (\mathring{\CB}_{v} \times \mathring{\CB}_{b}) \neq \emptyset.
\]
Hence $ (B^+\dot{b} B^+/B^+) \cap   (\dot{v} B^+ \dot{u} B^+/B^+) \neq \emptyset$. Hence $ b \le v \ast u$.
\end{proof}

\subsubsection{} We assume that $G$ is a connected reductive group.  Let $w_0$ be the longest element of the finite Weyl group $W$. 
By \cite[Theorem~8.7]{Lus-1}, we have $U^-_{>0} B^+/B^+=U^+_{>0}\dot{w}_0B^+/B^+$. Thus 
$$
\alpha(U^-_{>0}, U^-_{>0}B^+/B^+)=\alpha(U^-_{>0}, U^+_{>0}\dot{w}_0B^+/B^+)=(G_{>0})_{\D} \cdot (B^+/B^+, \dot{w}_0B^+/B^+).
$$

We may naturally identify $\CX$ with $G/ B^+ \times G/B^-$.   Then $\CX_{\ge 0}$ is identified with the Hausdorff closure of $(G_{> 0})_{\Delta} \cdot (B^+/B^+, B^-/B^-)$ in $G/ B^+ \times G/B^-$. The latter space was considered in \cite{WY}.

As a special case of Theorem~\ref{thm:Zp}, we have that $\CX_{\ge 0}$ is a union of semi-algebraical cells. This verifies a conjecture of Webster and Yakimov \cite[Conjecture 1]{WY}.

\subsection{Links on double Bruhat cells} We still write $\CZ =\CZ_2$ in this subsection. 

\subsubsection{}

We define the link of the cell $\CZ_{v', \ul{w}', >0} $ in the space $\CZ_{v,\ul{w}, \ge 0}$  for any $(v', \ul{w}') < (v, \ul{w}) \in Q$. Our definition of links is similar to \cite[\S3.1]{GKL}. 
 
 We  first define a closed subspace of $\CZ_{\ge 0}$ as follows
 $$
 {\rm Star}_{\ge (v', \ul{w}')} = \bigsqcup_{ (v', \ul{w}') \le (v'', \ul{w}'') \in Q} \CZ_{(v'', \ul{w}''), >0}.
 $$
  Let $(u, \ul{u_1}) \in Q$ be a minimal element such that $ (u, \ul{u_1}) \le (v', \ul{w}')$. By Proposition~\ref{prop:TM}, we have the following isomorphism via restriction
  \[
\phi:   {\rm Star}_{\ge (v', \ul{w}')} \rightarrow \bigsqcup_{v'' \le v'} \tilde{\CB}_{i(v''), i(u), > 0}  \times  \bigsqcup_{\ul{w}' \le \ul{w}''}  \tilde{\CB}_{th(\ul{u_1}), th(\ul{w}''), >0}
   \]
 Applying  \eqref{eq:prod} again, we obtain the isomorphism 
 \begin{equation}\label{eq:lk1}
  {\rm Star}_{\ge (i(v'), \ul{w}')}\rightarrow \tilde{\CB}_{i(v'), i(u), >0} \times \tilde{\CB}_{th(\ul{u_1}), th(\ul{w}'), >0} \times  \bigsqcup_{v'' \le v'} \tilde{\CB}_{i(v''), i(v'), > 0}  \times  \bigsqcup_{\ul{w}' \le \ul{w}''}  \tilde{\CB}_{th(\ul{w}'), th(\ul{w}''), >0}.
\end{equation}

We recall some representation theoretical notations. Denote by $ \tilde{X}^{++}$ the set of regular dominant weights of $\tilde{G}$. For $\lambda \in \tilde{X}^{++}$, we denote by $V^\lambda$ the highest weight simple $\tilde{G}$-module over $\BC$ with highest weight $\lambda$. We denote the highest weight vector by $\eta_\lambda$. We denote by $V^\lambda(\BR)$ the $\BR$-subspace of $V^\lambda$ spanned by the canonical basis. We equip $V^{\lambda,\lambda}(\BR) = V^\lambda(\BR) \otimes_\BR V^\lambda(\BR)$ with the standard Euclidean norm with respect to the tensor product of the canonical bases, denoted by $|| \cdot ||$.
 
 We continue from \eqref{eq:lk1}. We consider the projection 
\begin{align}\label{eq:lk2}
 &\tilde{\CB}_{i(v'), i(u), >0} \times \tilde{\CB}_{th(\ul{u_1}), th(\ul{w}'), >0} \times  \bigsqcup_{v'' \le v'} \tilde{\CB}_{i(v''), i(v'), > 0}  \times  \bigsqcup_{\ul{w}' \le \ul{w}''}  \tilde{\CB}_{th(\ul{w}'), th(\ul{w}''), >0} \\
  \notag \rightarrow &
\bigsqcup_{v'' \le v'} \tilde{\CB}_{i(v''), i(v'), > 0}  \times  \bigsqcup_{\ul{w}' \le \ul{w}''}  \tilde{\CB}_{th(\ul{w}'), th(\ul{w}''), >0}.
\end{align}  
For any $\tilde{v}, \tilde{w} \in \tilde{W}$, we have a natural embedding 
 \begin{align}
\label{eq:lk3} 
\pi':  \mathring{\tilde{\CB}}^{\tilde{w}}(\BR) \times   \mathring{\tilde{\CB}}_{\tilde{v}}(\BR) &\xrightarrow{((\dot{\tilde{w}})^{-1}, (\dot{\tilde{v}})^{-1}) \cdot -} \tilde{U}^-(\BR)\tilde{B}^+/ \tilde{B}^+ \times \tilde{U}^-(\BR) \tilde{B}^+/ \tilde{B}^+ \\
  \notag &\xrightarrow{(u_1,u_2) \mapsto u_1\eta_\lambda \otimes u_2 \eta_\lambda} V^{\lambda,\lambda}(\BR) .
 \end{align}
 
 We take $\tilde{v} = i(v')$ and $\tilde{w} = th(\ul{w'})$ in \eqref{eq:lk3}, and denote the composition of \eqref{eq:lk1},  \eqref{eq:lk2} and  \eqref{eq:lk3} by  $\pi: {\rm Star}_{\ge (v', \ul{w}')} \rightarrow V^{\lambda,\lambda}(\BR)$.
 
For any $(v', \ul{w}') < (v'', \ul{w}'') \le (v, \ul{w}) \in Q$,  we define the link space as
\begin{align*}
 Lk_{(v', \ul{w}')} (\CZ_{v,\ul{w}, \ge 0}) &=\pi ( {\rm Star}_{\ge (v', \ul{w}')} \cap \CZ_{v,\ul{w}, \ge 0}) \bigcap \{  x \in V^{\lambda,\lambda}(\BR) \vert\,  ||x - \eta_\lambda \otimes \eta_\lambda || =1  \};\\
  Lk_{(v', \ul{w}'), (v'', \ul{w}''), > 0} &=  Lk_{(v', \ul{w}')} (\CZ_{v,\ul{w}, \ge 0})  \cap \pi ( {\rm Star}_{\ge (v', \ul{w}')} \cap \CZ_{v'',\ul{w}'', > 0}).
\end{align*}

We hence have a stratified space 
\[
 Lk_{(v', \ul{w}')} (\CZ_{v,\ul{w}, \ge 0}) = \bigsqcup_{(v', \ul{w}') < (v'', \ul{w}'') \le (v, \ul{w}) \in Q}   Lk_{(v', \ul{w}'), (v'', \ul{w}''), > 0}.
\] 
 It follows from the construction that the definition of $Lk_{(v', \ul{w}')} (\CZ_{v,\ul{w}, \ge 0})$ is independent of the choice of the minimal element $(u, \ul{u_1}) \in Q$ and the regular dominant weight $\lambda$, up to a stratified isomorphism. 

\begin{prop}\label{prop:lk}
Let  $(v', \ul{w}') < (v, \ul{w}) \in Q$. Then  $Lk_{(v', \ul{w}')} (\CZ_{v,\ul{w}, \ge 0})$  is a regular CW complex homeomorphic to a closed ball.
\end{prop}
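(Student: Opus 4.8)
The plan is to deduce this from the regularity results already established for twisted products of flag varieties (Theorem~\ref{thm:CW}) together with the explicit description of $\mathrm{Star}_{\ge(v',\ul{w}')}$ in \eqref{eq:lk1}. First I would identify, at the level of face posets, what $Lk_{(v',\ul{w}')}(\CZ_{v,\ul{w},\ge0})$ is: by construction its strata are indexed by the open interval $((v',\ul{w}'),(v,\ul{w})]$ in $Q$, and the closure relations match those of $Q$ restricted to this interval. So the candidate face poset of $Lk_{(v',\ul{w}')}(\CZ_{v,\ul{w},\ge0})$ is the augmented poset $\{(v'',\ul{w}''); (v',\ul{w}')<(v'',\ul{w}'')\le(v,\ul{w})\}\sqcup\{\hat{0}\}$, which is exactly the closed interval $[\hat 0,(v,\ul{w})]$ with the added relation below $(v',\ul{w}')$ removed, i.e.\ a half-open interval of $\hat Q$ made into a poset-with-$\hat0$. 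The key combinatorial input is that by Theorem~\ref{thm:hatQ}, $\hat Q$ is pure and thin, and every lower interval $[\hat 0, x]$ is shellable; the same holds for the relevant sub-interval here since convex subsets of $Q$ are shellable (as shown in the proof of Theorem~\ref{thm:hatQ}).

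The main technical step is to show that $Lk_{(v',\ul{w}')}(\CZ_{v,\ul{w},\ge0})$ is genuinely a regular CW complex with that face poset, and that its closure strata are homeomorphic to closed balls. For this I would run the same inductive scheme as in the proof of Theorem~\ref{thm:CW}: using the product decomposition \eqref{eq:lk1}--\eqref{eq:lk3} and the collar/manifold-with-boundary structure coming from \cite[\S5.1]{BH22} and Proposition~\ref{prop:TM}, one sees that each closed link stratum $\overline{Lk_{(v',\ul{w}'),(v'',\ul{w}''),>0}}$ is a compact topological manifold with boundary, whose interior is an open ball (here the sphere in $V^{\lambda,\lambda}(\BR)$ intersects the star transversally, cutting down the dimension by one while preserving the cell structure — this is the role of the Euclidean norm construction, exactly as in \cite[\S3.1]{GKL}) and whose boundary is the union of the lower link strata. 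By induction on $\ell(\ul{w}'')-\ell(v'')$ (the base case being a point, when the interval has length one in $\hat Q$, i.e.\ when $(v'',\ul{w}'')$ covers $(v',\ul{w}')$), together with \S\ref{subsec:CW}(b) applied to the shellable thin graded interval, the boundary is a sphere of the right dimension, so Theorem~\ref{thm:poincare} gives that the closed stratum is a closed ball. Then \S\ref{subsec:CW}(a) applied to the interval $[\hat0,(v,\ul{w})]$ with the $(v',\ul{w}')$-relation suppressed — which is still pure, thin, and shellable — yields that $Lk_{(v',\ul{w}')}(\CZ_{v,\ul{w},\ge0})$ is a regular CW complex homeomorphic to a closed ball.

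The hard part will be verifying that the norm-sphere section in \eqref{eq:lk3} is compatible with the stratification in a \emph{regular} way — i.e.\ that intersecting with the unit sphere centered at $\eta_\lambda\otimes\eta_\lambda$ neither creates singular points nor destroys the ball-ness of closure strata, and that it yields precisely a manifold-with-boundary whose boundary poset is the expected half-open interval. This is the analogue of the argument in \cite[\S3]{GKL} and I would model the proof closely on theirs, invoking the explicit parametrizations from Theorem~\ref{thm:Zp}(3) and the embedding of totally nonnegative cells via canonical bases (so that $\pi'$ in \eqref{eq:lk3} is a well-defined injective map with image meeting each ray from $\eta_\lambda\otimes\eta_\lambda$ at most once). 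Once that geometric input is in place, the rest is a formal bookkeeping of face posets plus the by-now-standard combination of Björner's criterion \S\ref{subsec:CW}(a),(b) and the generalized Poincaré conjecture (Theorem~\ref{thm:poincare}), mirroring the proof of Theorem~\ref{thm:CW} verbatim.
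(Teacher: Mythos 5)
Your overall strategy (local manifold-with-boundary structure plus thinness/shellability of the face poset plus the generalized Poincar\'e conjecture, run inductively over the strata of the link) is the right kind of argument, but it is not what the paper does, and as written it stops short of a proof. The paper's proof is a reduction: after setting up $S$, $R$, $T'$ and $T=T'\cap R$, it observes that the composition \eqref{eq:lk1}--\eqref{eq:lk3} identifies $Lk_{(v',\ul{w}')}(\CZ_{v,\ul{w},\ge 0})$, as a stratified space, with $\pi'(T)\cap S$, and that this space is \emph{literally a special case} of the links already treated in \cite[\S 4]{BH22} for the totally nonnegative flag variety of the Kac--Moody group $\tilde G\times\tilde G$; then \cite[Proposition~4.4]{BH22} gives at once that it is a regular CW complex homeomorphic to a closed ball. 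No new sphere-intersection analysis, no new induction, and no appeal to Theorem~\ref{thm:hatQ} or Theorem~\ref{thm:poincare} is needed at this point. Your plan instead proposes to re-prove the content of \cite[\S 4]{BH22} from scratch, modelled on \cite[\S 3]{GKL}.

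The genuine gap is that the step you yourself flag as ``the hard part'' --- that intersecting $\pi'(\mathrm{Star}_{\ge(v',\ul{w}')}\cap\CZ_{v,\ul{w},\ge 0})$ with the unit sphere centred at $\eta_\lambda\otimes\eta_\lambda$ yields a stratified space whose closed strata are manifolds with boundary, with interiors open balls and boundaries the unions of lower link strata --- is precisely the nontrivial geometric content, and your proposal defers it entirely (``model the proof closely on \cite{GKL}''). Without it, neither the induction nor the application of Theorem~\ref{thm:poincare} can start, so the argument is an outline rather than a proof; the missing idea relative to the paper is the recognition that the space constructed in \eqref{eq:lk1}--\eqref{eq:lk3} is already covered by \cite[Proposition~4.4]{BH22}. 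Two smaller points: your final appeal to \S\ref{subsec:CW}(a) is misplaced, since (a) is purely combinatorial and only produces \emph{some} regular CW ball with the given face poset --- the conclusion for the actual space $Lk_{(v',\ul{w}')}(\CZ_{v,\ul{w},\ge 0})$ has to come from your induction (top stratum case) itself; and your combinatorial bookkeeping (thinness/shellability of the half-open interval with $\hat 0$ adjoined) is fine, as it follows from the convexity argument in the proof of Theorem~\ref{thm:hatQ}, but in the paper's route it is not needed for this proposition.
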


\begin{proof}Let 
\begin{align*}
&S =  \{x \in V^{\lambda,\lambda}(\BR)  \vert\,  ||x - \eta_\lambda \otimes \eta_\lambda || =1\}, \quad R =  \tilde{\CB}^{i(v)}_{ \ge0} \times \tilde{\CB}_{th(\ul{w}), \ge 0} \subset \tilde{\CB}_{\ge 0} \times \tilde{\CB}_{\ge 0},\\
&T' =\bigsqcup_{v'' \le v'} \tilde{\CB}_{i(v''), i(v'), > 0}  \times  \bigsqcup_{\ul{w}' \le \ul{w}''}  \tilde{\CB}_{th(\ul{w}'), th(\ul{w}''), >0} \subset \tilde{\CB}_{\ge 0} \times \tilde{\CB}_{\ge 0} , \quad T = T' \cap R.
\end{align*}

It follows from the construction that $Lk_{(v', \ul{w}'), (v, \ul{w}), \ge 0}$ is isomorphic to $\pi'(  T ) \cap  S$ as stratified spaces.  The space $\pi'(  T ) \cap  S$ is a special case of links considered in \cite[\S4]{BH22} related to the flag variety of the Kac-Moody group $\tilde{G} \times \tilde{G}$. 

By \cite[Proposition~4.4]{BH22}, $\pi'(  T ) \cap  S$ is a regular CW complex homeomorphic to a closed ball. The proposition now follows.
\end{proof}
 
\subsubsection{} We assume that $G$ is a connected reductive group. We denote by $w_0$ the longest element in the Weyl group of $G$.  

  For any $v, w \in W$, we define the double Bruhat cell $ G^{v,w} = B^+ \dot{w} B^+ \cap B^- \dot{v} B^-$. Let $\pi: G \rightarrow G/T = L$ be the quotient map.  We define the reduced double Bruhat cells as $L^{v,w} = \pi(G^{v,w})$. By \cite[Proposition~2.1]{WY}, we have the stratified embedding 
  \[
	f :  L \rightarrow \CB \times \CB \cong \CZ, \quad gT \mapsto (gB^+/ B^+, g\dot{w}_0B^+/ B^+) \mapsto (g, \dot{w}_0B^+/ B^+),
\]
mapping $L^{u,v}$ isomorphically to $\mathring{\CZ}_{{ww_0}, (v, w_0)}$  We further define $L_{\ge 0} = \pi(G_{\ge 0})$. It is clear that $L_{\ge 0} = \sqcup_{v,w} L^{v,w}_{ >0}$ with $L^{v,w}_{ >0} = L^{v,w} \cap L_{\ge 0}$. 

Let ${\mathbf v} = (s_{i_1}, \cdots, s_{i_m} )$ and ${\mathbf w} = (s_{j_1}, \cdots, s_{j_n})$ be reduced expressions of $v$ and $w$, respectively.  It follows by \cite{Lus-1} that the map 
\begin{align}\label{eq:db}
	\BR_{> 0}^{m+ n } &\longrightarrow L^{v,w}_{ >0}, \\
	\notag  (a_1,\dots, a_n, b_1, \dots, b_m) &\mapsto y_{j_1} (a_1) \cdots y_{j_n} (a_n) x_{i_1}(b_1) \cdots x_{i_m}(b_m),
\end{align}
is an isomorphism. 
\begin{lem}
We have $L_{\ge 0} =\overline{\pi(G_{>0})}$ in $L$. 
\end{lem}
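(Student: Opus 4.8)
The plan is to show the two inclusions $L_{\ge 0} \subseteq \overline{\pi(G_{>0})}$ and $\overline{\pi(G_{>0})} \subseteq L_{\ge 0}$ separately, working with the definition $L_{\ge 0} = \pi(G_{\ge 0})$ and the known structure of $G_{\ge 0}$.

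First I would observe that the reverse inclusion $\overline{\pi(G_{>0})} \subseteq L_{\ge 0}$ is the easy direction: we have $\pi(G_{>0}) \subseteq \pi(G_{\ge 0}) = L_{\ge 0}$, so it suffices to know that $L_{\ge 0}$ is closed in $L$. This follows because $G_{\ge 0}$ is a closed subset of $G(\BR)$ (it is the Hausdorff closure relevant to the totally nonnegative theory, and more concretely $G_{\ge 0} = U^-_{\ge 0}T_{>0}U^+_{\ge 0}$ is closed), the $T$-action is free and proper on the relevant cells, and $\pi$ is a quotient map that is closed on $T$-stable closed sets; alternatively one invokes that $L_{\ge 0}$ is a finite union $\sqcup_{v,w} L^{v,w}_{>0}$ of cells whose closures are again unions of such cells (the analogue of Theorem~\ref{thm:Zp}(2) transported through the embedding $f$), hence $L_{\ge 0}$ is compact, in particular closed.

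For the forward inclusion $L_{\ge 0} \subseteq \overline{\pi(G_{>0})}$, the key point is that $\pi(G_{>0})$ is dense in $L_{\ge 0}$. The natural route is through the parametrization \eqref{eq:db}: the totally positive part $G_{>0}$ is (for reductive $G$) the cell $G^{w_0,w_0}_{>0}$, and under $\pi$ it maps to the top-dimensional stratum $L^{w_0,w_0}_{>0}$, which is open and dense in $L_{\ge 0}$ by the closure relations on the strata (the poset $\hat Q$ has $(w w_0,(v,w_0))$ with maximal $v = w_0$ corresponding to the big cell). Concretely, I would argue that every lower stratum $L^{v,w}_{>0}$ lies in the Hausdorff closure of $L^{w_0,w_0}_{>0}$: this is exactly the statement, transported via $f$ and Corollary~\ref{cor:irre} / Theorem~\ref{thm:Zp}(2), that the closure of the big open stratum $\mathring{\CZ}_{w_0 w_0,(w_0,w_0)}$ is all of $\CZ_{w w_0, (v,w_0), \ge 0}$ for the relevant pairs, together with the fact that $m_\ast(w_0,w_0) = w_0 \ast w_0 = w_0$ so the big cell dominates every $L^{v,w}$.

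The main obstacle I expect is bookkeeping the interaction between the torus quotient and the Hausdorff closure: one must check that $\pi(\overline{A}) = \overline{\pi(A)}$ for $A = G_{>0}$, i.e., that taking the closure commutes with the quotient by $T$ in this setting. Since $T_{>0} \cong \BR_{>0}^r$ acts freely and the projection $G_{\ge 0} \to L_{\ge 0}$ is a trivial $\BR_{>0}^r$-bundle on each stratum (indeed $G^{v,w}_{>0} \cong L^{v,w}_{>0} \times \BR_{>0}^r$), continuity and properness give $\pi(\overline{G_{>0}} ) = \overline{\pi(G_{>0})}$; combined with $\overline{G_{>0}} = G_{\ge 0}$ (Lusztig's result for reductive groups, via $U^-_{>0}T_{>0}U^+_{>0}$ being dense in $G_{\ge 0}$) we get $L_{\ge 0} = \pi(G_{\ge 0}) = \pi(\overline{G_{>0}}) = \overline{\pi(G_{>0})}$, which is the claim.
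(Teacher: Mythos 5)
Your route is genuinely different from the paper's, and its core idea is sound. The paper proves the easy inclusion $L_{\ge 0}\subseteq\overline{\pi(G_{>0})}$ exactly as in your final paragraph (continuity of $\pi$ plus $\overline{G_{>0}}=G_{\ge 0}$), but for the other inclusion it does not argue that $L_{\ge 0}$ is abstractly closed; instead it transports everything through the embedding $f\colon L\to\CZ$, applies the closure relations of Theorem~\ref{thm:Zp}(2) to the big stratum, and then proves the key surjectivity $L^{v,w}_{>0}\to\CZ_{vw_0,(w,w_0),>0}$ using the parametrization \eqref{eq:db}, Lusztig's identity $U^+_{>0}\dot w_0B^+/B^+=U^-_{>0}B^+/B^+$ and Theorem~\ref{thm:Zp}(3). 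That surjectivity is worth more than the bare lemma: it identifies each cell $L^{v,w}_{>0}$ with a stratum of $\CZ_{\ge 0}$, which is exactly what the paper uses right afterwards to define links and prove Theorem~\ref{thm:link}. Your quotient-map argument is more elementary and self-contained, but yields only the stated equality.

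Two points in your argument need repair. First, $G_{\ge 0}$ is stable under $T_{>0}$ but not under $T(\BR)$, so it is not a saturated set for the quotient $G(\BR)\to G(\BR)/T(\BR)$ and ``closed on $T$-stable closed sets'' does not apply verbatim; freeness and properness are not the operative points either. The fix is easy: the saturation $G_{\ge 0}T(\BR)$ is a finite union of translates $G_{\ge 0}\epsilon$ over the components of $T(\BR)$, hence closed, and since the quotient map is open its image $\pi(G_{\ge 0})$ is closed (one should also note that $G(\BR)/T(\BR)$ is open and closed in $L(\BR)$, so this really is closedness in $L$). Second, your ``alternative'' justification is flawed: $L_{\ge 0}$ is closed but not compact (already for $SL_2$, the cell $L^{w_0,w_0}_{>0}\cong\BR_{>0}^2$ has no limit points in $L$ as the parameter goes to infinity), and asserting that closures of the cells $L^{v,w}_{>0}$ inside $L$ are unions of such cells ``transported through $f$'' presupposes precisely the identification $f(L^{v',w'}_{>0})=\CZ_{v'w_0,(w',w_0),>0}\cap f(L)$, i.e.\ the surjectivity the paper proves — so as stated it is circular. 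Likewise your stratum-by-stratum treatment of the forward inclusion is unnecessary; the one-line continuity argument you give at the end suffices.
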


\begin{proof}
Since $\overline{G_{>0}} = G_{\ge 0}$, we have $L_{\ge 0} \subset \overline{\pi(G_{>0})}$. 
We have the following commutative diagram 
\[
\begin{tikzcd}
L_{\ge 0} \ar[d] \ar[r,equal] & \sqcup_{v,w}L^{v,w}_{ >0} \ar[r, hookleftarrow] \ar[d] & L^{v,w}_{ >0} \ar[d]\\
\overline{\pi(G_{>0})} \ar[r] &  \overline{ \CZ_{e, (w_0, w_0), >0}} \cap L = \sqcup_{v,w}\CZ_{vw_0, (w, w_0), >0} \ar[r, hookleftarrow]& \CZ_{vw_0, (w, w_0), >0}.
\end{tikzcd}
\]

We claim $L^{v,w}_{ >0} \rightarrow \CZ_{vw_0, (w, w_0), >0}$ is surjective, hence an isomorphism. Let ${\mathbf v} = (s_{i_1}, \cdots, s_{i_m} )$ and ${\mathbf w} = (s_{j_1}, \cdots, s_{j_n})$ be reduced expressions of $v$ and $w$, respectively. We consider the morphism following \eqref{eq:db}
\[
\BR_{> 0}^{m+ n } \longrightarrow L^{v,w}_{ >0}  \longrightarrow \CZ_{vw_0, (w, w_0), >0}
 \]
mapping $ (a_1,\dots, a_n, b_1, \dots, b_m)$ to  $(y_{j_1} (a_1) \cdots y_{j_n} (a_n), x_{i_1}(b_1) \cdots x_{i_m}(b_m) \dot{w}_0 B^+ /B^+)$. By \cite[Theorem~8.7]{Lus-1}, we have $ U^+_{>0} \dot{w}_0 B^+/ B^+ = U^-_{>0} B^+/ B^+$. Hence the image of $\BR_{> 0}^{m+ n }$ is the entire $\CZ_{vw_0, (w, w_0), >0}$ by Theorem~\ref{thm:Zp}. This finishes the lemma.
\end{proof}

Via the embedding $L_{\ge 0} \rightarrow \CZ_{\ge 0}$, we can define the link of $L^{v,w}_{ >0}$ in $L_{\ge 0}$ as 
\[
 Lk_{(v,w)}(L_{\ge 0}) = Lk_{(v, (w,w_0))} (\CZ_{\ge 0}), \quad \text{for any } v, w \in W \text{ such that } (v,w) \neq (w_0, w_0).
\]
 By Proposition~\ref{prop:lk}, we have the following theorem, solving an open problem of Fomin and Zelevinsky.
\begin{thm}\label{thm:link}
Let $v, w \in W$ such that $(v,w) \neq (w_0, w_0)$. The link $Lk_{(v,w)}(L_{\ge 0})$ of $L^{v,w}_{ >0}$ in $L_{\ge 0}$   is a regular CW complex homeomorphic to a closed ball.
\end{thm}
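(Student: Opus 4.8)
The plan is to reduce the statement directly to Proposition~\ref{prop:lk}. First I would record, via the lemma above and the description of the embedding $f$ of \cite[Proposition~2.1]{WY}, that $L_{\ge 0}$ is identified with the subspace $\bigsqcup_{v,w}\CZ_{vw_0,(w,w_0),>0}$ of $\CZ_{\ge 0}$ and that each totally positive double Bruhat cell $L^{v,w}_{>0}$ is thereby carried isomorphically onto the stratum $\CZ_{vw_0,(w,w_0),>0}$. Since $G$ is reductive, $W$ is finite, so $(e,(w_0,w_0))$ is the maximal element of the poset $Q$ (recall that the first coordinate enters the order on $Q$ via reversed Bruhat order); combining this with Theorem~\ref{thm:Zp}(2) gives $\CZ_{\ge 0}=\CZ_{e,(w_0,w_0),\ge0}$, so $\CZ_{\ge 0}$ is the Hausdorff closure of a single top stratum and hence falls within the scope of Proposition~\ref{prop:lk}. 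Under these identifications the object $Lk_{(v,w)}(L_{\ge 0})$ defined above is exactly $Lk_{(vw_0,(w,w_0))}\bigl(\CZ_{e,(w_0,w_0),\ge0}\bigr)$.

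Next I would check the hypothesis of Proposition~\ref{prop:lk}, namely that $(vw_0,(w,w_0))<(e,(w_0,w_0))$ in $Q$. Indeed $(vw_0,(w,w_0))\in Q$ because $vw_0\le w_0=m_\ast(w,w_0)$, and $(vw_0,(w,w_0))\le(e,(w_0,w_0))$ holds automatically since $e$ is the smallest and $w_0$ the largest element of $W$; the inequality is strict precisely when the two pairs differ, that is, when $v\neq w_0$ or $w\neq w_0$, which is exactly the hypothesis $(v,w)\neq(w_0,w_0)$. (If $(v,w)=(w_0,w_0)$ then $L^{w_0,w_0}_{>0}$ is the open dense stratum of $L_{\ge 0}$, which has no link, and this is why the case is excluded.)

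I would also note briefly why forming the link inside the larger space $\CZ_{\ge 0}$, as in the definition, loses no information: the star $\mathrm{Star}_{\ge(vw_0,(w,w_0))}$ taken inside $\CZ_{\ge 0}$ consists only of the strata $\CZ_{v'',\ul{w}'',>0}$ with $(vw_0,(w,w_0))\le(v'',\ul{w}'')$, and this inequality forces the last entry of $\ul{w}''$ to equal $w_0$, so every such stratum already lies in the image of $L_{\ge 0}$; hence the local structure of $\CZ_{\ge 0}$ near the relevant stratum coincides with that of $L_{\ge 0}$. With this observed, Proposition~\ref{prop:lk} applied to the pair $(v',\ul{w}')=(vw_0,(w,w_0))<(e,(w_0,w_0))=(v,\ul{w})$ yields at once that $Lk_{(v,w)}(L_{\ge 0})$ is a regular CW complex homeomorphic to a closed ball.

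I do not anticipate a genuine obstacle: all of the geometry -- the thickening map, the stratified product isomorphisms, and the regularity of links in totally nonnegative flag varieties of Kac--Moody groups from \cite{BH22} -- has already been absorbed into Proposition~\ref{prop:lk}. The remaining work is purely bookkeeping: transporting the stratification of $L_{\ge 0}$ to that of $\CZ_{e,(w_0,w_0),\ge0}$ through $f$, checking that $\CZ_{\ge 0}$ is the closure of its top stratum, and the elementary order computation in $Q$ that converts the excluded case $(v,w)=(w_0,w_0)$ into the failure of strictness of the inequality.
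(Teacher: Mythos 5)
Your argument is correct and follows essentially the same route as the paper: identify $L_{\ge 0}$ with $\bigsqcup_{v,w}\CZ_{vw_0,(w,w_0),>0}\subset\CZ_{\ge 0}=\CZ_{e,(w_0,w_0),\ge 0}$ via the lemma and the embedding $f$, and then apply Proposition~\ref{prop:lk} to the strict inequality $(vw_0,(w,w_0))<(e,(w_0,w_0))$ in $Q$, with strictness failing exactly when $(v,w)=(w_0,w_0)$. Your added bookkeeping (the maximality of $(e,(w_0,w_0))$ in $Q$, the star staying inside the image of $L_{\ge 0}$, and the index $(vw_0,(w,w_0))$, which is the one consistent with the lemma) only makes explicit what the paper leaves implicit.
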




\begin{thebibliography}{99}


\bibitem{Bj}
A. Bjorner, \emph{Shellable and Cohen-Macaulay partially ordered sets}, Trans. Amer. Math. Soc. \textbf{260} (1980), 159--183.

\bibitem{Bj2}
A. Bjorner, \emph{Posets, regular CW complexes and Bruhat order}, Europ. J. Combin. \textbf{5} (1984), 7--16.


\bibitem{BH20}
H. Bao and X. He, \emph{Flag manifolds over semifields}, Algebra and Number theory 15 (2021), No. 8, 2037--2069.


\bibitem{BH22}
H. Bao and X. He, \emph{Product structure and regularity theorem for totally nonnegative flag varieties}, arXiv:2203.02137.

\bibitem{BW}A. Bjorner and M. Wachs, {\em Bruhat order of Coxeter groups and shellability}, Adv. Math. {\bf 43} (1982), 87--100.

\bibitem{CGG}R. Casals, E. Gorsky, M. Gorsky, I. Le, L. Shen and J. Simental, {\em Cluster structures on braid varieties}, arXiv: 2207.11607.





\bibitem{Dyer1}
M. J. Dyer, \emph{Hecke algebras and shellings of Bruhat intervals}, Compositio Mathematica {\bf 89} (1993), pp. 91--115. 

\bibitem{Dyer}
M. J. Dyer, \emph{Hecke algebras and shellings of Bruhat intervals. II. Twisted Bruhat orders}, Kazhdan-Lusztig theory and related topics (Chicago, IL, 1989), 141--165, Contemp. Math. \textbf{139}, Amer. Math. Soc., Providence, RI, 1992.




\bibitem{Dyer3}M. J.  Dyer, \emph{Reflection subgroups of Coxeter systems},
J. Algebra \textbf{135} (1990), no. 1, 57–73.

\bibitem{FZ}
S.~Fomin and A.~Zelevinksy, \emph{Cluster algebras. I. Foundations}, J. Amer. Math. Soc. 15 (2002), no. 2, 497--529.



\bibitem{GKL}P.~Galashin, S.~Karp and T.~Lam,
\emph{Regularity theorem for totally nonnegative flag varieties}, J. Amer. Math. Soc. {\bf 35} (2021), no.~2, 513--579. 

\bibitem{GLBS} P. Galashin, T. Lam, M. Sherman-Bennett, and D. Speyer, {\em Braid variety cluster structures I},  	arXiv:2210.04778.

\bibitem{GLB} P. Galashin, T. Lam, M. Sherman-Bennett, and D. Speyer, {\em Braid variety cluster structures II}, arXiv:2301.07268.

\bibitem{He-min}
X. He, {\em Minimal length elements in some double cosets of Coxeter groups},  Adv. Math. 215 (2007), 469--503. 

\bibitem{HL}X. He and J.-H. Lu, {\em On intersections of certain partitions of a group compactification}, Int. Math. Res. Not. (2011), no. 11, 2534--2564. 

\bibitem{Her}P.~Hersh, {\em Regular cell complexes in total positivity}, Invent. Math., 197 (1), (2014), 57--114.




\bibitem{MR}R.~J.~Marsh and K.~Rietsch, {\em Parametrizations of flag varieties}, Represent. Theory {\bf 8} (2004), 212--242.

 
 
 


\bibitem{Kac}
V. G.~Kac, \emph{Infinite-dimensional Lie algebras}, 3rd edn., Cambridge University Press,
Cambridge, 1990.


\bibitem{Kum}
S.~Kumar, \emph{Kac-Moody groups, their flag varieties and representation theory}, 
Progress in Mathematics, vol. 204. Birkh\"auser Boston Inc., Boston, MA, 2002.

\bibitem{Kum1}
S.~Kumar, \emph{Positivity in T-equivariant K-theory of flag varieties associated to Kac-Moody group (with an appendix by M. Kashiwara}, J. Eur. Math. Soc., {\bf 19}, (2017), 2469--2519. 

\bibitem{KM} A. Knutson and E. Miller, {\em Subword complexes in Coxeter groups}, Adv. Math. { 184} (1), 161--176, 2004.
%

\bibitem{Lus-1}
G.~Lusztig, \emph{Total positivity in reductive groups}, Lie theory and geometry, 531--568, 
Progr. Math., 123, Birkh\"auser Boston, Boston, MA, 1994. 

\bibitem{Lu-positive} G.~Lusztig, {\em Positive structures in Lie theory}, ICCM Not. 8 (2020), no. 1, 50--54.

\bibitem{Lu-2}
G.~Lusztig, \emph{Total positivity in reductive groups, II}, Bull. Inst. Math. Acad. Sinica (N.S.) 14(2019), 403--460.






\bibitem{Pos}
A. Postnikov, \emph{Total positivity, Grassmannians, and networks}, arXiv:0609764. 

\bibitem{Ri99}
K.~Rietsch, \emph{An algebraic cell decomposition of the nonnegative part of a flag variety}, 
J. Algebra 213 (1999), no. 1, 144--154. 

\bibitem{Ri06}K. Rietsch, \emph{Closure relations for totally nonnegative cells in $G/P$}, Math. Res. Lett. {\bf 13} (2006), no.~5-6, 775--786.  





\bibitem{Ti87}
J.~Tits, \emph{Uniqueness and presentation of Kac-Moody groups over fields}, J. Algebra 105 (1987), no. 2, 542--573.

\bibitem{WY}
B. Webster and M. Yakimov, \emph{A Deodhar type stratification on the double flag variety}, Transformation Groups 12 (2007), no. 4, 769--785.


\bibitem{LW}
L.~K.~Williams, \emph{Shelling totally nonnegative flag varieties} J. Reine Angew. Math. \textbf{609} (2007), 1--21.


\end{thebibliography}
\end{document}